\newtheorem{theorem}{Theorem}
\newtheorem{lemma}[theorem]{Lemma}
\newtheorem{definition}[theorem]{Definition}
\newtheorem{corollary}[theorem]{Corollary}
\newtheorem{remark}[theorem]{Remark}
\numberwithin{equation}{section}
\newcommand{\vertiii}[1]{{\left\vert\kern-0.25ex\left\vert\kern-0.25ex\left\vert #1 
		\right\vert\kern-0.25ex\right\vert\kern-0.25ex\right\vert}}
\newcommand{\isdef}{\mathrel{\mathrel{\mathop:}=}}
\newcommand{\supp}{\operatorname{supp}}
\newcommand{\on}[1]{\operatorname{#1}}
\newcommand{\drm}{\mathrm{d}}
\newcommand{\Vbfm}{\mathbf{V}}
\newcommand{\nbfm}{\mathbf{n}}
\newcommand{\xbfm}{\mathbf{x}}
\newcommand{\ybfm}{\mathbf{y}}
\newcommand{\Rbbb}{\mathbb{R}}
\newcommand{\Acal}{\mathcal{A}}
\newcommand{\Ccal}{\mathcal{C}}
\newcommand{\Dcal}{\mathcal{D}}
\newcommand{\Ecal}{\mathcal{E}}
\newcommand{\Hcal}{\mathcal{H}}
\newcommand{\Kcal}{\mathcal{K}}
\newcommand{\Lcal}{\mathcal{L}}
\newcommand{\Mcal}{\mathcal{M}}
\newcommand{\Ocal}{\mathcal{O}}
\newcommand{\Rcal}{\mathcal{R}}
\newcommand{\Scal}{\mathcal{S}}
\newcommand{\Tcal}{\mathcal{T}}
\newcommand{\Vcal}{\mathcal{V}}
\newcommand{\Wcal}{\mathcal{W}}
\newcommand{\alphabfm}{{\boldsymbol{\alpha}}}
\newcommand{\kappabfm}{{\boldsymbol{\kappa}}}
\newcommand{\nubfm}{{\boldsymbol{\nu}}}
\newcommand{\D}{\operatorname{D}\!}
\begin{document}
\title[BIEs for the heat equation in time-dependent domains]
{Boundary integral operators for the heat equation in time-dependent domains}
\author{Rahel Br\"ugger}
\author{Helmut Harbrecht}
\address{Rahel Br\"ugger and Helmut Harbrecht, 
Departement Mathematik und Informatik, 
University of Basel, 
Spiegelgasse 1, 4051 Basel, Schweiz.}
\email{\{ra.bruegger,helmut.harbrecht\}@unibas.ch}
\author{Johannes Tausch}
\address{Department of Mathematics, 
Southern Methodist University, Dallas, TX 75275, USA.}
\email{tausch@smu.edu}

\begin{abstract}
  This article provides a functional analytical framework
  for boundary integral equations of the heat equation
  in time-dependent domains. More specifically, we consider a
  non-cylindrical domain in space-time that is the
    $C^2$-diffeomorphic image of a cylinder, i.e., the tensor product
    of a time interval and a fixed domain in space. On the
  non-cylindrical domain, we introduce Sobolev spaces, trace lemmata
  and provide the mapping properties of the layer operators by
  mimicking the proofs of Costabel \cite{Costabel1990}. Here it
    is critical that the Neumann trace requires a correction term
  for the normal velocity of the moving boundary. Therefore,
  one has to analyze the situation carefully.
\end{abstract}

\keywords{
Heat equation, boundary integral equation, time-dependent moving boundary, non-cylindrical domain}

\maketitle

\section{Introduction}
Boundary integral equations are a well-known technique to solve elliptic 
partial differential equations, see for example \cite{Sauter2010, Steinbach2008}. 
For parabolic equations on \emph{time-independent}, so-called \emph{cylindrical}
domains, Sobolev spaces and the mapping properties of the layer operators for 
the heat equation are introduced in
\cite{Costabel1990,noon88}. To the best of our knowledge, 
no such theory exists for \emph{time-dependent} or so-called \emph{non-cylindrical} 
domains, or simply, \emph{tube}. Therefore, the aim of this article is to 
extend the theory from cylindrical domains to non-cylindrical domains.

To that end, we consider a special class of non-cylindrical domains. We
fix a cylindrical domain which serves as a reference domain and define
the non-cylindrical domain as the image under a time-dependent $C^2$-diffeomorphism. 
In this setting, different approaches are possible to establish analogue 
integral equations and properties to integral operators
on a cylindrical domain. 

A first approach could be to exploit the fact that the fundamental solution does
not use boundary data and is thus defined on the free space $\Rbbb \times 
\Rbbb^d$. Therefore, it is the same for a cylindrical and a non-cylindrical 
domain and allows to state the integral operators in cylindrical and non-cylindrical 
domains. To establish the mapping properties of the integral operators, one could 
make use of the equivalence of norms on the tube and on the cylindrical domain 
by establishing equivalence results of the fundamental solution, evaluated on the 
tube and on the cylindrical domain. The problem is that the Neumann trace, which 
will be considered here, contains an additional term involving the normal velocity of 
the tube. Therefore, one has to come up with a solution how to deal with this.

A second approach could be to map back the partial differential equation 
from the non-cylindrical domain onto the cylindrical domain. The advantage is 
that one now considers a cylindrical domain, for which more theory is available. 
The drawback is that the differential equation in the reference
domain is more complicated because of
time and space dependent coefficients. Finding a fundamental solution 
is more difficult and one could for example pursue the parametrix ansatz,
taken in \cite{Friedman1983}, and then find the according mapping properties 
of the respective layer operators.

A third approach considers the partial differential equation on the
non-cylindrical domain. Here, the partial differential equation
is simple, but the domain is more involved in contrast to the second
approach. This approach was used in \cite{Tausch2019}, but without
the corresponding Sobolev spaces and mapping properties of the
integral operators. Since we already did computations in
\cite{Bruegger2020} based on \cite{Tausch2019}, and since
\cite{Costabel1990} provides a self-contained analysis of the mapping
properties of the layer operators for the heat equation in a
cylindrical domain, we choose this approach for this article.

Although we follow the argumentation line of Costabel \cite{Costabel1990}, 
we repeat the proofs here in the non-cylindrical setting for the reader's convenience, 
since we have to use the appropriate function spaces and the correct Neumann traces. 
We would like to emphasize that, once one has the appropriate Neumann trace operator 
at hand and the mapping properties of the trace operators, the ideas of \cite{Costabel1990} 
can be followed directly. We indicate the needed adaptations in the article.

The remainder of this article is organized as follows:
In Section \ref{sec.anisotropic_sobolev_spaces}, we introduce anisotropic Sobolev 
spaces on cylindrical and non-cylindrical domain, which are used for the 
mapping properties. Section \ref{sec.Dirichlet_traces} is dedicated to the Dirichlet 
traces and the existence and uniqueness of solutions of the Dirichlet problem. 
In Section \ref{sec.neumann_trace_op}, we have a look at the appropriate Neumann 
trace. The main result is presented in Section \ref{sec.calderon_op}, where we establish 
the mapping properties of the integral operators and the existence and 
uniqueness of solutions of the Neumann problem. In Section \ref{sec.conclusion}, 
we state some concluding remarks. 

\section{Anisotropic Sobolev spaces}
\label{sec.anisotropic_sobolev_spaces}
In order to study the heat equation, we shall introduce appropriate 
aniso\-tro\-pic Sobolev spaces on cylindrical domains. From these spaces, 
we will then derive Sobolev spaces on time-dependent domains.
\subsection{Anisotropic Sobolev spaces on cylindrical domains}
Let $\Omega_0 \subset \Rbbb^d$, $d \geq 2$, be a Lipschitz domain in the spatial 
variable with boundary $\Gamma_0 \isdef \partial \Omega_0$ and let $0<T<\infty$. Then, 
the product set $Q_0 \isdef (0,T) \times \Omega_0 \subset \Rbbb^{d+1}$ forms a 
time-space cylinder with the lateral boundary $\Sigma_0\isdef (0,T)\times \Gamma_0$. 
The appropriate function spaces for parabolic problems in time invariant domains, 
i.e.~in cylindrical domains, are the anisotropic Sobolev spaces defined by 
\[ 
H^{r,s} (Q_0) \isdef L^2\big((0,T); H^r (\Omega_0)\big) \cap H^s\big((0,T); L^2(\Omega_0)\big)
\]
for $r,s \in \mathbb{R}_{\geq 0}$, see, e.g., \cite{Chapko1998, Costabel1990,Lions1972a}.
The corresponding boundary spaces are
\[ 
H^{r,s} (\Sigma_0) \isdef L^2\big((0,T); H^r (\Gamma_0)\big) \cap H^s\big((0,T); L^2(\Gamma_0)\big)
\]
Note that these spaces are well-defined for $r \leq 1$ (while $s\ge 0$ is arbitrary) if 
$\Gamma_0$ is Lipschitz.

\begin{remark}
The space $H^{r,s} (Q_0)$ consists of all functions $u \in 
L^2 (Q_0)$, where the $L^2 (Q_0)$-norm of the partial derivatives 
$\partial_{\xbfm}^{\alphabfm} \partial_t^\beta u (t, \xbfm)$ is finite 
for all $|\alphabfm| \leq \lambda r$, $\beta\leq(1-\lambda) s$, 
and $\lambda \in [0,1]$. 
\end{remark}

With these definitions at hand, we can moreover define spaces for functions 
with zero initial condition by setting
\[
H^{r,s}_{;0,}(Q_0)  \isdef L^2 \big( (0,T); H^r (\Omega_0)\big) \cap H_{0,}^{s} \big( (0,T); L^2 (\Omega_0) \big) ,
\]
where
\[ H_{0,}^s \big( (0,T); L^2 (\Omega_0) \big) \isdef \Big\lbrace u = \tilde{u}|_{(0,T)} \colon \tilde{u} \in H^s \big( (-\infty,T); L^2 (\Omega_0) \big)\colon \tilde{u} (t) = 0 \text{ for } t<0 \Big\rbrace.\]
Note that we adopted the notation from \cite{Dohr2019a,
  Dohr2019}. In addition,
we can define functions which vanish at $t=T$ by setting
\begin{equation*}
H^{r,s}_{;,0}(Q_0)  \isdef L^2 \big( (0,T); H^r (\Omega_0)\big) \cap H_{,0}^{s} \big( (0,T); L^2 (\Omega_0) \big) ,
\end{equation*}
where in complete analogy
\[ H_{,0}^s \big( (0,T); L^2 (\Omega_0) \big) \isdef \Big\lbrace u = \tilde{u}|_{(0,T)} \colon \tilde{u} \in H^s \big( (0,\infty); L^2 (\Omega_0) \big)\colon \tilde{u} (t) = 0 \text{ for } t>T \Big\rbrace.\]

As in the elliptic case, we can also include (spatial) zero boundary 
conditions into the function spaces by setting
\[
\begin{aligned}
H^{r,s}_{0;0,} (Q_0) \isdef L^2 \big( (0,T); H^r_0 (\Omega_0 ) \big) \cap H_{0,}^s \big( (0,T); L^2(\Omega_0) \big), \\
H^{r,s}_{0;,0} (Q_0) \isdef L^2 \big( (0,T); H^r_0 (\Omega_0 ) \big) \cap H_{,0}^s \big( (0,T); L^2(\Omega_0) \big),
\end{aligned}
\]
where the spaces include zero initial and end conditions, 
respectively. On the boundary, we introduce
\[
\begin{aligned}
H^{r,s}_{;0,} (\Sigma_0)  &\isdef L^2  \big( (0,T); H^r (\Gamma_0) \big) \cap H^{s}_{0,} \big( (0,T); L^2(\Gamma_0) \big), \\
H^{r,s}_{;,0} (\Sigma_0)  & \isdef L^2 \big( (0,T); H^r (\Gamma_0) \big) \cap H^{s}_{,0} \big( (0,T); L^2(\Gamma_0) \big).
\end{aligned}
\]
These spaces are the closures of $H^{r,s}(\Sigma_0)$ for zero start and end 
condition, respectively, compare \cite[Section 2.3]{Dohr2019a}.

By duality we have
\[ 
H^{-r, -s}_{;0,} (Q_0) = \big[H_{0;,0}^{r,s} (Q_0)\big]' \quad \text{for } r - \frac{1}{2} \notin \mathbb{Z}
\]
according to \cite{Costabel1990}.
The anisotropic Sobolev spaces on the boundary with 
negative smoothness index are defined by
\begin{align*}
H^{-r,-s}_{;,0} (\Sigma_0) &\isdef \big[ H^{r,s}_{;0,} (\Sigma_0) \big]',\\
H^{-r,-s}_{;0,} (\Sigma_0) &\isdef \big[ H^{r,s}_{;,0} (\Sigma_0) \big]',\\
\widetilde{H}^{-r,-s} (\Sigma_0) &\isdef \big[ H^{r,s} (\Sigma_0) \big]',
\end{align*}
see \cite[Section 2.3]{Dohr2019a}. Moreover, according to \cite[Remark 2.1]{Dohr2019a}, 
for $r \geq 0$ and $0 \leq s < \frac{1}{2}$ it holds $H^{r,s}(\Sigma_0) = H^{r,s}_{;0,}(\Sigma_0) 
= H^{r,s}_{;,0}(\Sigma_0)$ and, therefore, the above introduced dual spaces are equal
and we simply write $H^{-r,-s}(\Sigma_0)$.

\begin{remark}
We would like to clarify the intuition behind the slightly cumbersome 
notation. In $H^{r,s}_{0;,} (Q_0)$, a zero before the semicolon indicates 
a zero boundary condition in space. After the semicolon, a zero initial 
condition can be indicated by writing a zero between the semicolon and 
the comma. Whereas, a present zero after the comma stands for a zero 
end condition. Thus, this notation allows to see the spatial and temporal 
boundary condition at one glance.
\end{remark}

\subsection{Anisotropic Sobolev spaces on non-cylindrical domains}
\label{subsec.anisotropic_Sobolev}
Having at hand the Sobolev spaces defined on cylindrical domains, 
we can also introduce Sobolev spaces on non-cylindrical domains. 
Non-cylindrical domains consist of a spatial domain, which we denote 
by $\Omega_t$. The subscript $t$ indicates that the spatial domain 
might differ for every point of time. To obtain a non-cylindrical domain 
$Q_T$ we set
\[ 
Q_T \isdef \bigcup_{0< t< T} \big( \lbrace t \rbrace \times \Omega_t \big). 
\]
This domain has a lateral boundary $\Sigma_T$ defined by
\[ 
\Sigma_T \isdef \underset{0 < t < T}{\bigcup} \big( \{t \} \times\Gamma_t \big),
\]
where $\Gamma_t\isdef \partial \Omega_t$.

For every point of time $t$, we assume to have a smooth diffeomorphism 
$\boldsymbol\kappa$, which maps the initial domain $\Omega_0$ onto 
the time-dependent domain $\Omega_t$. In accordance with 
\cite{Moubachir2006}, we write
\begin{equation}\label{eq:deformation}
\boldsymbol\kappa\colon [0, T ] \times \mathbb{R}^d \to \mathbb{R}^d, 
\quad (t,\xbfm) \mapsto \boldsymbol\kappa(t, \xbfm)
\end{equation}
to emphasize the dependence of the mapping $\boldsymbol\kappa$ on 
the time, where we have $\boldsymbol\kappa(t,\Omega_0) = \Omega_t$. 
Especially, $\Omega_t$ is also a Lipschitz domain for all $t\in [0,T]$.

The domains $\Omega_t$ each have a spatial normal $\nbfm_t$, 
which we will also denote by $\nbfm$ if it is clear from the context. 
Besides having a spatial normal, we also have a time-space normal 
$\nubfm$. We can write the time-space normal as
\begin{equation}
\label{eq.space_time_normal}
\nubfm = \frac{1}{\sqrt{1+v_{\nubfm}^2}} \begin{bmatrix}
v_{\nubfm} \\ \nbfm
\end{bmatrix}
\end{equation}
for some appropriate $v_{\nubfm} \in \Rbbb$. According to \cite{Dziri2001}, it holds
\[  v_{\nubfm}= - \langle \Vbfm, \nbfm \rangle \]
for the vector field $\Vbfm$, which deforms the cylinder $Q_0$ into the tube 
$Q_T$ and for which the relation $\Vbfm = \partial_t \kappabfm \circ \kappabfm^{-1}$ holds.

We introduce the 
non-cylindrical analogues of the Sobolev spaces by setting
\[H^{r,s}(Q_T) \isdef \big\lbrace v \in L^2(Q_T)\colon v \circ \kappabfm \in H^{r,s} (Q_0) \big\rbrace \]
where the composition with $ \boldsymbol\kappa$ 
only acts on the spatial component. Due to the chain rule, $v\circ\boldsymbol\kappa$ 
and $v$ have the same Sobolev regularity, provided that the mapping $\boldsymbol\kappa$ 
is smooth enough, see for example \cite[Theorem 3.23]{McLean2000} for the elliptic case. 
For what follows, we assume that that $\kappabfm \in C^2\big([0,T]\times \mathbb{R}^d\big)$ 
satisfies
\begin{equation}\label{eq.uniformity_cond}
\|\boldsymbol\kappa(t, \xbfm) \|_{C^2([0,T]\times \mathbb{R}^d ; \mathbb{R}^d)}, 
\|\boldsymbol\kappa(t, \xbfm)^{-1} \|_{C^2([0,T] \times \mathbb{R}^d ; \mathbb{R}^d)} \leq C_{\kappabfm}
\end{equation}
for some constant $C_{\kappabfm} \in (0, \infty)$ as in \cite[pg.~826]{Harbrecht2016}.
We define the norm of $H^{r,s}(Q_T)$ as 
\[
	 \| u\|_{H^{r,s}(Q_T)} = \| u\circ \kappa \|_{H^{r,s}(Q_0)}
\]
for $r,s \geq 0$.
Notice that the Sobolev spaces on the boundary are defined 
in a similar manner.

\begin{remark}
(i)
The space $H^{r,s}(Q_T)$ contains all functions such that
$u \circ \kappabfm \in H^{r,s}(Q_0)$. This means that
$\partial_{\xbfm}^{\alphabfm} \partial_t^\beta (u \circ \kappabfm)
\in L^2 (Q_0)$ for all $|\alphabfm| \leq \lambda r$, $\beta\leq(1-\lambda) s$, 
and $\lambda \in [0,1]$. According to \eqref{eq.uniformity_cond}, the 
partial derivatives $\partial^{\alphabfm}_\xbfm\partial_t^\beta\kappabfm$ 
exist and are uniformly bounded for all $|\alphabfm|+\beta\leq 2$. 

(ii)
Consider a function $u\in L^2(Q_T)$ with partial derivatives 
$\partial_{\bf x}^{\alphabfm}\partial_t^\beta u\in L^2(Q_T)$ for 
all $|\alphabfm| \leq \lambda r$, $\beta\leq(1-\lambda) s$, and 
$\lambda \in [0,1]$. When computing the time-derivative of 
$u\circ\kappabfm$, we obtain also a spatial derivative as the 
following shows
\begin{equation}\label{eq.id_stefan2}
  (\partial_t u)\circ \kappabfm = \partial_t(u \circ \kappabfm) 
	-\big\langle(\on{D}\kappabfm)^{-\intercal}
	\nabla(u \circ \kappabfm) , \partial_t \kappabfm\big\rangle.
\end{equation}
Hence, it holds $u\in H^{r,s}(Q_T)$ only if $r\ge s$ since 
the temporal derivative $\partial_t^\beta (u\circ\kappa)$ involves also 
spatial partial derivatives $\partial_{\bf x}^{\alphabfm} u$ up to the order
$|\alphabfm|=\beta$ besides the temporal derivative $\partial_t^\beta u$.

(iii)
Due to the uniformity condition \eqref{eq.uniformity_cond}, 
we have as in \cite{Harbrecht2016}
\[ 
0 < \underline{\sigma} \leq \min \lbrace \sigma (\D\boldsymbol{\kappa}) \rbrace 
\leq \max \lbrace \sigma (\D\boldsymbol{\kappa}) \rbrace \leq \overline{\sigma} < \infty,
\]
where $\D\boldsymbol{\kappa}$ denotes the Jacobian of $\boldsymbol{\kappa}$
and $\sigma(\D\boldsymbol{\kappa})$ denotes its singular values. Especially, 
as in \cite[Remark 1, pg.~827]{Harbrecht2016}, we may assume 
$\det (\D\boldsymbol{\kappa})$ to be positive. 

We can define the dual of $H^{r,s}_{0;,0}(Q_T)$ in two different ways, namely
\[
   \| u \|_{H^{-r,-s}_{;0,} (Q_T)} = \sup_{\widetilde{v} \in H^{r,s}_{0;,0} (Q_0)} \int_{Q_0} 
   	(u \circ \kappabfm) \widetilde{v} \, \drm (\xbfm, t)
\]
and
\[ 
  \vertiii{u}_{H^{-r, -s}_{;0,} (Q_T)} = \sup_{v \in H^{r,s}_{0;,0}(Q_T)} \int_{Q_T} u v \,\drm (\xbfm, t).
\]
We show that these norms are equivalent. On one hand, there holds
\begin{align*}
\vertiii{u}_{H^{-r, -s}_{;0,} (Q_T)} 
&= \sup_{v \in H^{r,s}_{0;,0} (Q_T)} \frac{\int_{Q_0} (u \circ \kappabfm) (v \circ \kappabfm) \det ( \D \kappabfm) \, \drm (\xbfm, t)}
{\| v \|_{H^{r,s}_{0;,0} (Q_T)}} \\
& \leq \| u \|_{H^{-r, -s}_{;0,} (Q_T)} \sup_{v \in H^{r,s}_{0;,0}(Q_T)} \frac{ \big\| (v \circ \kappabfm) 
\det(\D \kappabfm) \big\|_{H^{r,s}_{0;,0} (Q_0)}}{\| v \circ \kappabfm \|_{H^{r,s}_{0;,0} (Q_0)}}\\
& \lesssim \| u \|_{H^{-r, -s}_{;0,} (Q_T)},
\end{align*}
where we used the definition of the norm on $H^{r,s}_{0;,0} (Q_T)$ 
for $s, r \geq 0$ and that the pointwise multiplication with a 
smooth function is a continuous operation.
On the other hand, we likewise find
\begin{align*}
\| u \|_{H^{-r,-s}_{;0,} (Q_T)} 
&= \sup_{\widetilde{v} \in H^{r,s}_{0;,0}(Q_0)} \frac{\int_{Q_T} u (\widetilde{v} \circ \kappabfm^{-1})
\det (\D \kappabfm^{-1}) \, \drm (\xbfm, t)}{\| \widetilde{v} \|_{H^{r,s}_{0;,0} (Q_0)} }\\
& \leq \vertiii{u}_{H^{-r,-s}_{;0,} (Q_T)} \sup_{\widetilde{v} \in H^{r,s}_{0;,0} (Q_0)} 
\frac{\big\|(\widetilde{v} \circ \kappabfm^{-1})\det (\D \kappabfm ^{-1}) \big\|_{H^{r,s}_{0;,0}(Q_T)}}
{\| \widetilde{v} \|_{H^{r,s}_{0;,0} (Q_0)}} \\
& \lesssim \vertiii{u}_{H^{-r, -s}_{;0,} (Q_T)},
\end{align*}
Hence, both duality pairings result in the same dual spaces and 
we can say that $H_{0;,0}^{r,s} (Q_T)$ and $H^{-r, -s}_{;0,} (Q_T)$ 
are indeed dual, as likewise for the other pairings.
\end{remark}

Finally, let the space $\mathcal{V}(Q_T)$ consist of all the functions $v$ 
with $v\circ\kappabfm\in\mathcal{V} (Q_0)$ and
\[ 
\mathcal{V}(Q_0)\isdef\big\{u \in L^2\big((0,T); H^1(\Omega_0)\big) \colon
\partial_t u \in L^2\big((0,T); H^{-1} (\Omega_0)\big)\big\}.
\]
The norm on this space is given by
\[ 
\| u \|_{\Vcal(Q_0)}^2 \isdef \| u \|_{H^{1,0} (Q_0)}^2 + \| \partial_t u \|_{H^{-1,0} (Q_0)}^2.
\]
Note that the space $\Vcal (Q_0)$ is a dense subspace of $H_{;,}^{1,\frac{1}{2}} (Q_0)$, 
which follows according to \cite[Formula (2.2)]{Costabel1990} from the interpolation result
\begin{equation}\label{eq.interpol_result}
 L^2(I; X) \cap H^1 (I;Y) \subset H^{\frac12} \big( I; [X,Y]_{\frac12} \big) \cap C \big( \bar{I}; [X,Y]_{\frac12} \big) 
\end{equation}
for $X \subset Y$ being Hilbert spaces.

\section{Dirichlet Problem}\label{sec.Dirichlet_traces}
\subsection{Dirichlet trace operator on cylindrical domains}
\label{subsec.trace_op_cylindrical}
We first introduce the notion of traces with respect to cylindrical domains. 
According to \cite[Section 2.3]{Dohr2019a}, we can define the (interior)
Dirichlet trace for a function $u \in C^1(\overline{Q}_0)$ as 
\[ 
\gamma_0 u (t, \xbfm) \isdef \lim_{\Omega_0 \ni \ybfm \to \xbfm \in \Gamma_0} 
u (t, \ybfm) \quad \text{for } (t, \xbfm) \in \Sigma_0. 
\]
We thus have $\gamma_0 u = u|_{\Sigma_0}$. We can introduce 
a similar operator on anisotropic Sobolev spaces, see the following lemma,
being along the lines of \cite[Theorem 2.1]{Lions1968}. It has been proven 
for $\Gamma_0 \in C^\infty$, but it is also true for a Lipschitz boundary
in accordance with \cite[pg.~504ff]{Costabel1990}.

\begin{lemma}
The map
\[
 \gamma_0 \colon H^{1,\frac{1}{2}} (Q_0) 
 	\to H^{\frac{1}{2}, \frac{1}{4}} (\Sigma_0)
\]
is linear and continuous. 
\end{lemma}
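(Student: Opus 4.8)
The plan is to reduce the statement to the classical trace theorem on a cylinder over a smooth domain and then upgrade to the Lipschitz case by the standard localisation/extension machinery, exactly as indicated in the sources cited. First I would recall the anisotropic interpretation of the spaces: by the definition in Section~\ref{sec.anisotropic_sobolev_spaces}, $H^{1,\frac12}(Q_0)=L^2((0,T);H^1(\Omega_0))\cap H^{\frac12}((0,T);L^2(\Omega_0))$, and similarly $H^{\frac12,\frac14}(\Sigma_0)=L^2((0,T);H^{\frac12}(\Gamma_0))\cap H^{\frac14}((0,T);L^2(\Gamma_0))$. The key point is that the spatial trace and the temporal regularity decouple in the right way: the operator $\gamma_0$ acts only in the spatial variable $\xbfm$, so one wants to combine (a) the classical elliptic trace estimate $\|\gamma_0 w\|_{H^{1/2}(\Gamma_0)}\lesssim\|w\|_{H^1(\Omega_0)}$, applied pointwise in $t$ and integrated in $L^2_t$, with (b) the fact that $\gamma_0$ is bounded $L^2(\Omega_0)\to H^{-1/2}(\Gamma_0)$ (or rather that it commutes with the temporal Bochner structure) so that the $H^{\frac14}_t L^2_x$–regularity transfers. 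Concretely, for smooth $u$ one estimates $\|\gamma_0 u\|_{L^2_tH^{1/2}_x}$ by (a), and for the temporal part one uses that $t\mapsto \gamma_0 u(t,\cdot)$ inherits $H^{1/4}$–regularity in $t$ with values in the right spatial space; the gain of a factor $\tfrac12$ in the spatial index is matched by a gain of $\tfrac14=\tfrac12\cdot\tfrac12$ in the temporal index, which is precisely the anisotropic scaling of the heat operator.

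The cleanest way to organise (b) is via interpolation, following \cite[Formula (2.2)]{Costabel1990} and \cite[Theorem 2.1]{Lions1968}. One shows that the map $u\mapsto\gamma_0 u$ is continuous both as $L^2((0,T);H^1(\Omega_0))\to L^2((0,T);H^{\frac12}(\Gamma_0))$ (trivially, from the elliptic trace) and as $H^1((0,T);H^{-1}(\Omega_0))$–type spaces into the corresponding boundary space, and then interpolates in the temporal index. Using the interpolation inclusion \eqref{eq.interpol_result} with $X=H^1(\Omega_0)$, $Y=L^2(\Omega_0)$ together with the matching identity $[H^{1/2}(\Gamma_0),L^2(\Gamma_0)]_{1/2}=H^{1/4}(\Gamma_0)$ for the trace on the boundary, one obtains that $u\in H^{1,\frac12}(Q_0)$ implies $\gamma_0 u\in H^{\frac14}((0,T);L^2(\Gamma_0))$ with the quantitative bound; intersecting with the already-established $L^2_tH^{1/2}_x$ bound gives membership in $H^{\frac12,\frac14}(\Sigma_0)$. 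Linearity is immediate, and a density argument (smooth functions are dense in $H^{1,\frac12}(Q_0)$) extends the estimate from $C^1(\overline Q_0)$ to all of $H^{1,\frac12}(Q_0)$.

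The remaining issue is the Lipschitz regularity of $\Gamma_0$. For $\Gamma_0\in C^\infty$ the above is \cite[Theorem 2.1]{Lions1968} verbatim. To cover Lipschitz boundaries I would follow \cite[pg.~504ff]{Costabel1990}: use a finite cover of $\Gamma_0$ by coordinate patches in which $\Omega_0$ is the subgraph of a Lipschitz function, flatten each patch by a bi-Lipschitz change of variables (which preserves $H^1(\Omega_0)$ and $H^{1/2}(\Gamma_0)$, hence the anisotropic spaces since time is untouched), and reduce to the half-space model where the trace estimate is elementary via the Fourier transform; a partition of unity reassembles the global bound. The only mild subtlety — and the step I expect to be the main obstacle — is that on a Lipschitz boundary the endpoint spaces $H^r(\Gamma_0)$ are only well-defined for $|r|\le 1$, so one must check that all spatial indices appearing ($1$ on $\Omega_0$, $\tfrac12$ on $\Gamma_0$, and the intermediate $H^{-1}$ duality used in the interpolation) stay within this admissible range; they do, precisely because the trace loses only half a derivative. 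Beyond this bookkeeping the argument is the standard anisotropic trace theorem, and since $\gamma_0$ never touches the time variable the non-cylindrical aspects of the paper play no role here.
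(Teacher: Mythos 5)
The paper itself gives no argument here -- the lemma is quoted from Lions--Magenes (Theorem 2.1, smooth case) and from Costabel's discussion of the Lipschitz case -- so the real question is whether your sketch would stand on its own, and in its key step it does not. The $L^2\big((0,T);H^{1/2}(\Gamma_0)\big)$ part and the localization/flattening for Lipschitz $\Gamma_0$ are fine, but every version you offer of the temporal estimate rests on a trace bound that does not exist. The trace operator is not bounded from $L^2(\Omega_0)$ into $H^{-1/2}(\Gamma_0)$, nor from $H^{-1}(\Omega_0)$-valued spaces into anything on $\Gamma_0$, and -- this is the endpoint your interpolation route actually needs -- not from $H^{1/2}(\Omega_0)$ into $L^2(\Gamma_0)$: since $C_0^\infty(\Omega_0)$ is dense in $H^{1/2}(\Omega_0)$, a bounded operator extending restriction to the boundary would have to vanish identically. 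Consequently, after you use (a suitably generalized form of) \eqref{eq.interpol_result} -- note that as stated it concerns $L^2(I;X)\cap H^1(I;Y)$, not $L^2(I;X)\cap H^{1/2}(I;Y)$ -- to place $u$ in $H^{1/4}\big((0,T);H^{1/2}(\Omega_0)\big)$, you cannot compose with a spatial trace; and interpolating the operator $\gamma_0$ in the temporal index requires two valid endpoint boundedness statements, of which you only have one, namely $L^2\big((0,T);H^1(\Omega_0)\big)\to L^2\big((0,T);H^{1/2}(\Gamma_0)\big)$. The obstacle is therefore not the one you flag (the restriction $|r|\le1$ for Sobolev spaces on a Lipschitz boundary, which is harmless here); it is the failure of the borderline spatial trace, which is already present for $C^\infty$ boundaries.

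The missing ingredient that makes the exact exponent $\tfrac14$ attainable is a parameter-dependent, multiplicative trace estimate. Extend in time and take the Fourier transform in $t$, so that $\|u\|_{H^{1,1/2}}^2$ is comparable to $\int \big(\|\widehat u(\omega)\|_{H^1(\Omega_0)}^2+(1+|\omega|)\|\widehat u(\omega)\|_{L^2(\Omega_0)}^2\big)\,\drm\omega$ and the target norm carries the weight $(1+|\omega|)^{1/2}$ on $\|\gamma_0\widehat u(\omega)\|_{L^2(\Gamma_0)}^2$. Then the multiplicative trace inequality $\|\gamma_0 w\|_{L^2(\Gamma_0)}^2\lesssim\|w\|_{L^2(\Omega_0)}\|w\|_{H^1(\Omega_0)}$, valid on Lipschitz domains and stable under the bi-Lipschitz flattening you describe, combined with Young's inequality, gives $(1+|\omega|)^{1/2}\|\gamma_0\widehat u(\omega)\|_{L^2(\Gamma_0)}^2\lesssim(1+|\omega|)\|\widehat u(\omega)\|_{L^2(\Omega_0)}^2+\|\widehat u(\omega)\|_{H^1(\Omega_0)}^2$; integrating in $\omega$ yields precisely the $H^{1/4}\big((0,T);L^2(\Gamma_0)\big)$ bound. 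With this replacement for your step (b) -- which is essentially how the cited references argue -- the rest of your outline (linearity, density of smooth functions, partition of unity) goes through.
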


We find the following statement in \cite[Lemma 2.4]{Costabel1990}, 
which holds in the case of a Lipschitz domain $\Omega_0$.

\begin{lemma}
	\label{lem.Dirichlet_trace_cont_and_surj}
The Dirichlet trace operator $\gamma_0$ is continuous and 
surjective as an operator from $H^{1,\frac{1}{2}}_{;0,} (Q_0)$ to 
$H^{\frac{1}{2},\frac{1}{4}} (\Sigma_0)$. 
\end{lemma}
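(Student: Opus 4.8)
The plan is to reduce everything to the corresponding statement on the cylinder $Q_0$ and then invoke the classical result of Costabel. Since the spaces $H^{1,\frac12}_{;0,}(Q_0)$ and $H^{\frac12,\frac14}(\Sigma_0)$ already live on the reference cylinder, no transport by $\boldsymbol\kappa$ is needed here; the content is entirely a matter of tracing through the definitions in Section~\ref{sec.anisotropic_sobolev_spaces} to see that the lemma as stated is literally \cite[Lemma 2.4]{Costabel1990}. So the first step is to recall that $H^{1,\frac12}_{;0,}(Q_0) = L^2\big((0,T);H^1(\Omega_0)\big)\cap H^{\frac12}_{0,}\big((0,T);L^2(\Omega_0)\big)$, i.e. the subspace of $H^{1,\frac12}(Q_0)$ consisting of functions whose extension by zero to $t<0$ still lies in $H^{1,\frac12}\big((-\infty,T)\times\Omega_0\big)$.

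Continuity then follows from the previous lemma (the Dirichlet trace $\gamma_0\colon H^{1,\frac12}(Q_0)\to H^{\frac12,\frac14}(\Sigma_0)$ is bounded) simply by restriction to the closed subspace $H^{1,\frac12}_{;0,}(Q_0)\subset H^{1,\frac12}(Q_0)$. The nontrivial half is surjectivity, and the natural route is to construct a bounded right inverse (extension operator) $E\colon H^{\frac12,\frac14}(\Sigma_0)\to H^{1,\frac12}_{;0,}(Q_0)$ with $\gamma_0 E = \mathrm{id}$. First I would take a datum $g\in H^{\frac12,\frac14}(\Sigma_0)$; by the remark cited after the dual-space definitions, for the low temporal regularity $s=\frac14<\frac12$ one has $H^{\frac12,\frac14}(\Sigma_0)=H^{\frac12,\frac14}_{;0,}(\Sigma_0)$, so $g$ already has a zero initial condition in the relevant weak sense and extends by zero to $(-\infty,T)\times\Gamma_0$. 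Then, working on the half-line $(-\infty,T)$, one uses a standard spatial lifting $\Gamma_0\to\Omega_0$ (an inverse trace operator for the elliptic pair $H^{\frac12}(\Gamma_0)\to H^1(\Omega_0)$, valid for Lipschitz $\Omega_0$) applied fibrewise in $t$, combined with the Fourier-in-time characterization of the anisotropic norms, to produce an extension that respects both the $L^2(H^1)$ and the $H^{\frac12}(L^2)$ components and inherits the vanishing for $t<0$; restricting back to $(0,T)$ gives an element of $H^{1,\frac12}_{;0,}(Q_0)$ with the right trace. This is exactly the construction in \cite[proof of Lemma 2.4]{Costabel1990}, and since $\Omega_0$ is only assumed Lipschitz one must use the Lipschitz versions of the spatial trace/extension theorems, as Costabel already notes on \cite[pg.~504ff]{Costabel1990}.

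The main obstacle is the surjectivity at the correct fractional orders together with the zero-initial-condition bookkeeping: one must verify that the extension operator genuinely lands in the subscripted space $H^{1,\frac12}_{;0,}$ rather than merely in $H^{1,\frac12}$, which is where the compatibility $s=\frac14<\frac12$ and the identification of the boundary spaces is used, and one must track the anisotropic scaling $(r,s)\mapsto(r/2,s/2)$ under the trace so that $g\in H^{\frac12,\frac14}$ is the sharp image space. I would simply cite \cite[Lemma 2.4]{Costabel1990} for these details after indicating the structure above, since the statement in the excerpt is identical to Costabel's and the domain hypotheses match.
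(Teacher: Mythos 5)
Your proposal is correct and takes essentially the same route as the paper, which offers no proof of its own but simply cites \cite[Lemma 2.4]{Costabel1990} (noting it remains valid for Lipschitz $\Omega_0$). Your additional outline --- continuity by restriction, surjectivity via a right inverse using that $H^{\frac12,\frac14}(\Sigma_0)=H^{\frac12,\frac14}_{;0,}(\Sigma_0)$ since $\frac14<\frac12$ --- is consistent with the mechanism behind Costabel's result and does not change the argument.
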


According to \cite[Theorem 2.4]{Dohr2019a}, there exists also an extension 
operator. The extension operator is a right inverse to the surjective Dirichlet trace 
operator $\gamma_0$ and, thus, extends a function defined only 
on the boundary to the space (see also \cite[pg.~12]{Dohr2019} and \cite[Definition 
2.17]{Costabel1990}). 

\begin{lemma}\label{lem.extension_of_trace}
The Dirichlet trace operator
\[ 
  \gamma_0 \colon H^{1, \frac{1}{2}}_{;0,}(Q_0) 
  	\to H^{\frac{1}{2},\frac{1}{4}} (\Sigma_0)
\]
has a continuous right inverse operator
\[
  \Ecal_0\colon H^{\frac{1}{2}, \frac{1}{4}} (\Sigma_0) \to H^{1, \frac{1}{2}}_{;0,} (Q_0),
\]
satisfying $\gamma_0 \Ecal_0 v = v$ for all 
$v \in H^{\frac{1}{2},\frac{1}{4}}(\Sigma_0)$.
\end{lemma}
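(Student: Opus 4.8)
The plan is to obtain the continuous right inverse abstractly from the continuity and surjectivity of $\gamma_0$ supplied by Lemma~\ref{lem.Dirichlet_trace_cont_and_surj}, exploiting that every space in sight is a Hilbert space. First I would record that $H^{1,\frac12}_{;0,}(Q_0)$ and $H^{\frac12,\frac14}(\Sigma_0)$, being intersections of spaces of the form $L^2\big((0,T);H^r\big)$ and $H^s\big((0,T);L^2\big)$, are Hilbert spaces when endowed with the sum of the two natural inner products, so that the closed graph and bounded inverse theorems are at our disposal. Note also that, since $\frac14<\frac12$, the boundary space satisfies $H^{\frac12,\frac14}(\Sigma_0)=H^{\frac12,\frac14}_{;0,}(\Sigma_0)$, so there is no mismatch between the trace space and the codomain carrying a zero initial condition.

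Next, because $\gamma_0$ is continuous, its kernel $N\isdef\ker\big(\gamma_0\colon H^{1,\frac12}_{;0,}(Q_0)\to H^{\frac12,\frac14}(\Sigma_0)\big)$ is a closed subspace, hence topologically complemented in the Hilbert space $H^{1,\frac12}_{;0,}(Q_0)=N\oplus N^{\perp}$ with continuous projections. The restriction $\gamma_0|_{N^{\perp}}\colon N^{\perp}\to H^{\frac12,\frac14}(\Sigma_0)$ is then a continuous linear bijection: injective since $N^{\perp}\cap N=\{0\}$, and surjective by Lemma~\ref{lem.Dirichlet_trace_cont_and_surj}. The bounded inverse theorem yields that $\big(\gamma_0|_{N^{\perp}}\big)^{-1}$ is continuous, and setting $\mathcal{E}_0\isdef\big(\gamma_0|_{N^{\perp}}\big)^{-1}$, viewed as a map into $H^{1,\frac12}_{;0,}(Q_0)$, produces a linear continuous operator with $\gamma_0\mathcal{E}_0 v=v$ for all $v\in H^{\frac12,\frac14}(\Sigma_0)$; the norm of $\mathcal{E}_0$ is precisely the constant in the asserted estimate. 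This closes the argument.

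I do not anticipate a real obstacle, since the substance is already contained in the surjectivity statement; the only point demanding a word of care is that the target is $H^{1,\frac12}_{;0,}(Q_0)$ with zero initial condition rather than $H^{1,\frac12}(Q_0)$, which is already built into Lemma~\ref{lem.Dirichlet_trace_cont_and_surj}. Should one instead want an explicit operator in the spirit of \cite[Definition~2.17]{Costabel1990}, the route would be to extend $v$ by zero to $t<0$ — legitimate exactly because of the zero initial condition — take a partial Fourier transform in time, solve for each temporal frequency $\tau$ a $\tau$‑parametrized spatial extension problem from $\Gamma_0$ into $\Omega_0$, and then track the dependence on $\tau$ so that the half‑order temporal weight matches the first‑order spatial weight in the anisotropic norm, the causal structure of the construction preserving the vanishing at $t=0$. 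I would nonetheless favour the abstract argument above, as it is shorter and already rigorous given the results established earlier in the paper.
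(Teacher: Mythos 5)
Your argument is correct: since $H^{1,\frac12}_{;0,}(Q_0)$ and $H^{\frac12,\frac14}(\Sigma_0)$ are Hilbert spaces and Lemma~\ref{lem.Dirichlet_trace_cont_and_surj} gives continuity and surjectivity of $\gamma_0$, the kernel is closed and orthogonally complemented, the restriction of $\gamma_0$ to the orthogonal complement is a continuous linear bijection, and the bounded inverse theorem makes its inverse a continuous linear right inverse. This is, however, a different route from the paper: the paper does not prove the lemma at all, but imports the extension operator from the literature (Dohr's thesis, Dohr--Niino--Steinbach, and Costabel's Definition~2.17), where $\Ecal_0$ is obtained by an explicit construction of the kind you sketch in your last paragraph (extension by zero in time, Fourier transform in $t$, a $\tau$-parametrized spatial lifting). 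Your abstract argument is shorter and fully self-contained once the surjectivity of Lemma~\ref{lem.Dirichlet_trace_cont_and_surj} is accepted, and it suffices for every use of $\Ecal_0$ in this paper, since Definition~\ref{def.gamma_1_minus} and its cylindrical counterpart only require \emph{some} continuous right inverse (well-definedness of the Neumann trace being guaranteed by Green's formula, not by a particular choice of $\Ecal_0$). What the cited explicit construction buys in exchange is an operator given by a concrete formula, which behaves uniformly across the scale of anisotropic spaces and is the standard object referred to in the literature; it also makes the preservation of the zero initial condition visible by construction, whereas in your argument this is inherited from the fact that the codomain of $\gamma_0|_{N^{\perp}}^{-1}$ is the subspace $H^{1,\frac12}_{;0,}(Q_0)$ itself. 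One small remark: your aside that $H^{\frac12,\frac14}(\Sigma_0)=H^{\frac12,\frac14}_{;0,}(\Sigma_0)$ because $\frac14<\frac12$ is consistent with the paper's conventions but is not needed for the argument.
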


\subsection{Dirichlet trace operator on non-cylindrical domains}
\label{subsec.trace_op_non_cylindrical_domains}
In this section, we denote the (interior) Dirichlet trace operator with respect 
to a non-cylindrical domain by $\gamma_{0,t}$ to 
distinguish it from the Dirichlet trace operator with respect to a cylindrical 
domain introduced above. When no confusion can happen, we will drop 
the subscript $t$ in the trace operator for a non-cylindrical domain. 

For a smooth function $u\in C^1(\overline{Q}_T)$, defined on a 
non-cylindrical domain, we set
\[ 
\gamma_{0,t} u (t, \xbfm_t) \isdef 
	\lim_{\Omega_t \ni \ybfm_t \to \xbfm_t \in \Gamma_t} u (t, \ybfm_t ).
\]
It obviously holds
\[ 
  \gamma_{0,t} u (t, \xbfm_t)  
    = \lim_{\substack{\Omega_0 \ni \ybfm \to \xbfm \in \Gamma_0, \\ \kappabfm(t,\xbfm) 
    = \xbfm_t}} u \big(t, \kappabfm (t,\ybfm) \big) = \gamma_{0}(u \circ \kappabfm) (t, \xbfm) 
    = \gamma_{0}(u \circ \kappabfm) \big(t, \kappabfm^{-1} (t,\xbfm_t) \big)
\]
for the diffeomorphism $\kappabfm$ from \eqref{eq:deformation}. By density 
of the smooth functions in the Sobolev spaces, we can also extend this 
notion to Sobolev spaces. Moreover, we have the same mapping properties 
for $\gamma_{0,t}$ as for $\gamma_0$, 
since
\begin{align*}
\| \gamma_{0,t} u \|_{H^{\frac{1}{2},\frac{1}{4}} (\Sigma_T)} 
	& = \| \gamma_{0,t} u\circ \kappabfm \|_{H^{\frac{1}{2},\frac{1}{4}}(\Sigma_0)} \\
	& = \big\| \big(\gamma_{0}(u \circ \kappabfm)  \circ \kappabfm^ {-1} \big)
			\circ \kappabfm \big\|_{H^{\frac{1}{2},\frac{1}{4}}(\Sigma_0)} \\
	& \lesssim \| u \circ \kappabfm \|_{H^{1, \frac{1}{2}}(Q_0)} \\
	& = \| u \|_{H^{1, \frac{1}{2}}(Q_T)}.
\end{align*}
Note that the hidden constant changes from line to line and depends 
on the diffeomorphism $\kappabfm$, because we used the norm equivalence 
on the cylindrical and non-cylindrical domain as well as the mapping property 
of the Dirichlet trace operator on the cylindrical domain.

Due to this consideration, all the properties of Section \ref{subsec.trace_op_cylindrical} 
remain valid for the Dirichlet trace operator on non-cylindrical domains. The surjectivity 
follows for example from the following consideration: Let $v \in H^{\frac12, \frac14} 
(\Sigma_T)$. By the definition of the norm, we thus have $v \circ \kappabfm \in 
H^{\frac12, \frac14} (\Sigma_0)$. By the surjectivity of the Dirichlet trace 
operator with respect to $Q_0$, there exists a $w\in H^{1, \frac12}_{;0,} (Q_0)$ 
with $\gamma_{0}w = v \circ \kappabfm$. Due to the bijectivity of 
$\kappabfm$, we may define 
\[
\hat{w}\isdef w\circ \kappabfm^{-1}\in H^{1, \frac12}_{;0,}(Q_T).
\] 
We thus have 
\[
\gamma_{0,t} \hat{w}(t, \xbfm_t) 
= \gamma_0 w \big(t, \kappabfm^{-1} (t,\xbfm_t)\big) 
= v (t,\xbfm_t), 
\]
from where the subjectivity follows and we can also 
infer the existence of the right inverse operator $\Ecal_0$.

\subsection{Existence and uniqueness of Dirichlet problem}
\label{sec.ex_and_unique_Dirichlet}
We consider the following Dirichlet problem with homogeneous initial datum
\begin{equation}\label{eq.generalized_diff_eq}
\begin{aligned}
(\partial_t - \Delta) v & = f   && \ \ \text{in } Q_T,  \\
\gamma_0 v & = q && \ \ \text{on } \Sigma_T, \\
v(0, \cdot) & = 0   &&\ \ \text{in } \Omega_0.
\end{aligned}
\end{equation}
We have the following existence and uniqueness theorem for
its solution.

\begin{theorem}\label{thm.solution_operator_isomorphism}
Let $f \in H^{-1, -\frac{1}{2}}_{;0,}(Q_T)$ and $q \in H^{\frac{1}{2},\frac{1}{4}}(\Sigma_T)$.
Then, there exists a unique solution $v \in H^{1,\frac{1}{2}}_{;0,} (Q_T)$,
satisfying the boundary condition in \eqref{eq.generalized_diff_eq} and
\begin{equation}\label{eq.weak_form_pde}
  S(v, \varphi) \isdef \int_0^T\int_{\Omega_t} \{\nabla v \cdot \nabla \varphi + \partial_t v \varphi\}\, \drm \xbfm \drm t
	= \int_0^T \int_{\Omega_t} f u \, \drm \xbfm \drm t \ \text{for all}\ \varphi \in H^{1, \frac{1}{2}}_{0;,0}(Q_T).
\end{equation}
\end{theorem}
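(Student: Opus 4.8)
The plan is to reduce the problem to one with homogeneous boundary datum, transport it to the reference cylinder $Q_0$ by means of $\kappabfm$, solve the resulting variable-coefficient parabolic problem on $Q_0$ along the lines of \cite{Costabel1990}, and transport the solution back. For the reduction, recall from Lemma~\ref{lem.extension_of_trace} and Section~\ref{subsec.trace_op_non_cylindrical_domains} that $\gamma_{0,t}$ has a continuous right inverse $\Ecal_0\colon H^{\frac12,\frac14}(\Sigma_T)\to H^{1,\frac12}_{;0,}(Q_T)$; since $\Ecal_0 q$ has vanishing initial value, writing $v=\Ecal_0 q+w$ reduces the claim to finding $w\in H^{1,\frac12}_{0;0,}(Q_T)$ with $S(w,\varphi)=F(\varphi)$ for all $\varphi\in H^{1,\frac12}_{0;,0}(Q_T)$, where $F(\varphi)\isdef\int_0^T\!\int_{\Omega_t}f\varphi\,\drm\xbfm\,\drm t-S(\Ecal_0 q,\varphi)$. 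Then $v=\Ecal_0 q+w\in H^{1,\frac12}_{;0,}(Q_T)$ satisfies $\gamma_{0,t}v=q$, and the initial condition in \eqref{eq.generalized_diff_eq} is automatic.

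Next, set $u=w\circ\kappabfm$, $\psi=\varphi\circ\kappabfm$; by the definition of the $H^{r,s}$-norms on $Q_T$ these are isomorphisms onto the corresponding spaces on $Q_0$. Changing variables in $S$ with the positive Jacobian $\mu\isdef\det(\on{D}\kappabfm)$, using $(\nabla w)\circ\kappabfm=(\on{D}\kappabfm)^{-\intercal}\nabla u$ together with identity \eqref{eq.id_stefan2} for the time derivative, and then replacing the test function by $\widetilde\psi\isdef\mu\,\psi$ (again an isomorphism of the test space, as multiplication by $\mu^{\pm1}$ is bounded on $H^{1,\frac12}_{0;,0}(Q_0)$ by \eqref{eq.uniformity_cond}), the problem turns into: find $u\in H^{1,\frac12}_{0;0,}(Q_0)$ with
\[
  \int_0^T\!\!\int_{\Omega_0}\Big\{\partial_t u\,\widetilde\psi+\Abfm\,\nabla u\cdot\nabla\widetilde\psi+\bbfm\cdot\nabla u\,\widetilde\psi\Big\}\,\drm\ybfm\,\drm t=\widetilde F(\widetilde\psi)\quad\text{for all }\widetilde\psi\in H^{1,\frac12}_{0;,0}(Q_0),
\]
where $\Abfm=(\on{D}\kappabfm)^{-1}(\on{D}\kappabfm)^{-\intercal}$ is symmetric and, by \eqref{eq.uniformity_cond} and the singular-value bounds, uniformly positive definite with bounded entries, $\bbfm$ is a bounded vector field assembled from $\on{D}\kappabfm$, $\partial_t\kappabfm$ and $\nabla\mu$, and $\widetilde F\in\big[H^{1,\frac12}_{0;,0}(Q_0)\big]'=H^{-1,-\frac12}_{;0,}(Q_0)$. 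This is the generalised Dirichlet problem \eqref{eq.generalized_diff_eq} with homogeneous boundary datum, on the cylinder $Q_0$, for the parabolic operator $\partial_t-\on{div}(\Abfm\nabla\,\cdot\,)+\bbfm\cdot\nabla$.

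On $Q_0$ the statement follows by the arguments of \cite{Costabel1990}. For \emph{uniqueness}, a solution of the homogeneous problem obeys $\partial_t u=\on{div}(\Abfm\nabla u)-\bbfm\cdot\nabla u\in L^2\big((0,T);H^{-1}(\Omega_0)\big)$ — using only $u\in L^2\big((0,T);H^1_0(\Omega_0)\big)$ and the boundedness of $\Abfm,\bbfm$ — hence $u\in\Vcal(Q_0)$ with zero initial value; pairing the pointwise-in-$t$ identity with $u(t)$, using $\langle\partial_t u(t),u(t)\rangle=\tfrac12\tfrac{\drm}{\drm t}\|u(t)\|^2_{L^2(\Omega_0)}$ (legitimate because $\Vcal(Q_0)\hookrightarrow C\big([0,T];L^2(\Omega_0)\big)$ by \eqref{eq.interpol_result}), the G{\aa}rding inequality for the spatial form, and Gr\"onwall's lemma, yields $u\equiv0$. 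For \emph{existence}, one works with the space-time variational formulation on the pair $\big(H^{1,\frac12}_{0;0,}(Q_0),H^{1,\frac12}_{0;,0}(Q_0)\big)$: boundedness of the bilinear form — whence also $F$ above is a bounded functional — follows from Cauchy--Schwarz for the $\Abfm$- and $\bbfm$-terms and, for the term $\partial_t u\,\widetilde\psi$, from reading it after integration by parts in $t$ over the \emph{fixed} domain $\Omega_0$ (the contributions at $t=0$ and $t=T$ vanish) as the duality pairing between $H^{-\frac12}_{;0,}$ and $H^{\frac12}_{;,0}$ in time with values in $L^2(\Omega_0)$; an inf--sup condition is then obtained as in \cite[Section~2]{Costabel1990}, the skew-symmetric part $\partial_t$ being an isomorphism $H^{\frac12}_{;0,}\to H^{-\frac12}_{;0,}$ in time, the spatial form being elliptic up to a zeroth-order term, and the shift $u\mapsto e^{-\lambda t}u$ absorbing the G{\aa}rding constant. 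This produces a unique $u\in H^{1,\frac12}_{0;0,}(Q_0)$; then $w=u\circ\kappabfm^{-1}\in H^{1,\frac12}_{0;0,}(Q_T)$ and $v=\Ecal_0 q+w$ is the sought solution, whose uniqueness on $Q_T$ follows from that on $Q_0$ by reversing the substitutions.

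The decisive step is the inf--sup estimate above, i.e.\ that $\partial_t-\on{div}(\Abfm\nabla\,\cdot\,)+\bbfm\cdot\nabla$ with vanishing initial and boundary data is an isomorphism $H^{1,\frac12}_{0;0,}(Q_0)\to H^{-1,-\frac12}_{;0,}(Q_0)$. Since $\widetilde F$ need not lie in $L^2\big((0,T);H^{-1}(\Omega_0)\big)$ — it inherits a negative half-order time derivative from $\Ecal_0 q$ — the classical Galerkin energy method is insufficient and one genuinely has to reproduce Costabel's sharper $H^{1,\frac12}$-analysis, the added difficulty being that the coefficients $\Abfm,\bbfm$ depend on $t$, so the Fourier-in-time estimates of \cite{Costabel1990} must be replaced by parabolic energy estimates. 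The other new features relative to \cite{Costabel1990} are benign: the Jacobian weight multiplying $\partial_t u$ is removed by the substitution $\widetilde\psi=\mu\psi$, the first-order term $\bbfm\cdot\nabla u$ is a lower-order perturbation absorbed by Young's inequality in the G{\aa}rding estimate, and the $t$-dependence of $\Omega_t$ disappears upon pulling back to $\Omega_0$.
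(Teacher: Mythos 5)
Your overall strategy---homogenize the Dirichlet datum with $\Ecal_0$, pull everything back to the reference cylinder via $\kappabfm$, and solve a variable-coefficient parabolic problem on $Q_0$---is coherent up to the point where it matters, but the decisive step is asserted rather than proven. After the pullback you must show that $\partial_t-\operatorname{div}(\Abfm\nabla\,\cdot\,)+\bbfm\cdot\nabla$, with coefficients depending on \emph{both} $t$ and $\xbfm$, is an isomorphism from $H^{1,\frac12}_{0;0,}(Q_0)$ onto $H^{-1,-\frac12}_{;0,}(Q_0)$; this is unavoidable because, as you note, $\widetilde F$ (through the $S(\Ecal_0 q,\cdot)$-part) lies only in the dual of $H^{1,\frac12}_{0;,0}(Q_0)$ and not in $L^2\big((0,T);H^{-1}(\Omega_0)\big)$. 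The ingredients you invoke---antisymmetry/boundedness of the $\partial_t$-pairing, a G{\aa}rding inequality, the shift $u\mapsto e^{-\lambda t}u$---yield well-posedness only in the classical $\Vcal(Q_0)$-framework for data in $L^2\big((0,T);H^{-1}(\Omega_0)\big)$; they do not combine into the inf--sup condition for the anisotropic pair, and your sketch concedes as much by saying one ``genuinely has to reproduce Costabel's sharper $H^{1,\frac12}$-analysis'' with the Fourier-in-time arguments replaced by unspecified ``parabolic energy estimates.'' That replacement is the whole difficulty: the half-order-in-time estimates in \cite{Costabel1990} exploit the $t$-independence of the operator (Fourier/Laplace techniques in time, respectively a transposition--interpolation argument built on the $\Vcal$-theory and its adjoint), and once $\Abfm,\bbfm$ depend on $t$ one has to control the interaction of half-order time derivatives with the coefficients, or verify that the duality and interpolation identities for $H^{1,\frac12}_{0;0,}(Q_0)$ and $H^{-1,-\frac12}_{;0,}(Q_0)$ still deliver an isomorphism for the transformed operator. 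None of this is carried out, so the proposal in effect assumes, transplanted to $Q_0$, precisely the statement to be proved.

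For comparison: the paper does not prove the theorem in-house but refers to \cite{Bruegger2020}, where the result is obtained by adapting the proof of \cite[Lemma 2.8]{Costabel1990}; in particular the weak formulation \eqref{eq.weak_form_pde} is kept on the moving domain, so the operator remains the constant-coefficient $\partial_t-\Delta$ and the time dependence is absorbed into the geometry. This is exactly the design decision announced in the introduction: working on $Q_T$ keeps the PDE simple, whereas pulling the equation back to $Q_0$---your route---produces time- and space-dependent coefficients, which is where your argument stalls. If you wish to keep the pullback route, the more promising repair is not an energy/inf--sup argument but a Lions--Magenes style transposition--interpolation scheme: classical theory gives the isomorphism from $\{u\in\Vcal(Q_0)\colon \gamma_0u=0,\ u(0)=0\}$ onto $L^2\big((0,T);H^{-1}(\Omega_0)\big)$ also for variable coefficients, the backward (transposed) problem supplies the second endpoint, and interpolating the two isomorphisms yields the anisotropic statement---but then the identification of the interpolation spaces with $H^{1,\frac12}_{0;0,}(Q_0)$ and $H^{-1,-\frac12}_{;0,}(Q_0)$ must be justified, which is again the substantive content currently missing from your proof.
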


\begin{proof}
A proof of this statement based on the proof of \cite[Lemma 2.8]{Costabel1990} 
can be found in \cite{Bruegger2020}.
\end{proof}

\section{Neumann trace operator}
\label{sec.neumann_trace_op}
Similarly as we defined the Dirichlet trace operator, 
we can also introduce an (interior) Neumann trace 
operator. In the following, we will first introduce this 
concept on cylindrical domains. Then, we will introduce 
the notion of a Neumann trace on a non-cylindrical domain 
formally and rigorously. 

\subsection{Neumann trace operator on cylindrical domains}
\label{subsec.neumann_trace_cylindrical}
We first introduce the Neumann trace operator, also called the 
conormal derivative, on a cylindrical domain along the lines 
of \cite{Costabel1990}. Let us define the space
\[
H^{1,\frac12} (Q_0; \Lcal) \isdef \big\lbrace u \in H^{1, \frac12} (Q_0) \colon \Lcal u \in L^2(Q_0) \big\rbrace,
\]
where $\Lcal \isdef \partial_t - \Delta$ is the partial differential 
operator under consideration. The norm on this space is given by
\[ 
  \|u \|_{H^{1, \frac12}(Q_0; \Lcal)}^2 \isdef \|u \|_{H^{1, \frac12} (Q_0)}^2 + \big\| (\partial_t - \Delta) u \big\|_{L^2 (Q_0)}^2. 
\]

According to \cite[Lemma 2.16]{Costabel1990}, the bilinear form
\[
b(u,v) \isdef \int_{Q_0} \big\{\langle \nabla u, \nabla v \rangle 
	- (\partial_t - \Delta) u v\big\}\, \drm \xbfm \drm t + d(u,v)
\]
is continuous on $H^{1, \frac12}(\Rbbb \times \Omega_0; \Lcal) \times 
H^{1, \frac12} (\Rbbb \times \Omega_0)$, where 
\[ 
  d (u,v) \isdef \int_{\Rbbb \times \Omega_0} \partial_t u v \, \drm \xbfm \drm t.
\]
The bilinear form $d(u,v)$ has a continuous extension 
from $C_0^\infty (\Rbbb^{d+1})\times C_0^\infty (\Rbbb^{d+1})$ to 
$H^\frac12 \big(\Rbbb; L^2 (\Omega_0) \big)\times H^\frac12\big(\Rbbb; 
L^2 (\Omega_0) \big)$ and it holds $d(u,v) = - d(v,u)$ for all 
$u,v \in H^{\frac12} \big(\Rbbb; L^2 (\Omega_0) \big)$, compare 
\cite[Lemma 2.6]{Costabel1990}. 

The (interior) Neumann trace is defined for $u \in C^1 (\overline{Q}_0)$ by
\[ 
  \gamma^{\operatorname{int}}_1 u (t, \xbfm) 
  	\isdef \lim_{\Omega_0 \ni \ybfm \to \xbfm \in \Gamma_0} \big\langle \nabla_{\ybfm} u (t, \ybfm), 
		\nbfm_{\xbfm} \big\rangle \quad \text{for } (t, \xbfm) \in \Sigma_0  
\]
and coincides with the normal derivative on $\Sigma_0$, thus 
$\gamma_1^{\operatorname{int}} u = \partial u/\partial \nbfm$ on 
$\Sigma_0$, see \cite[Section 3.3]{Dohr2019a} and also 
\cite[Satz 8.7]{Wloka1987} for the elliptic case. Since it holds
\[ 
  b(u,v) = \int_{\Sigma_0} \frac{\partial u}{\partial {\bf n}} v \,  \drm \sigma \drm t
\]
for $u,v \in C_0^2 (\Rbbb \times\overline{\Omega}_0)$, 
we can extend this definition as follows, which is along the 
lines of \cite[Definition 2.17]{Costabel1990}.

\begin{definition}
Let $u \in H^{1, \frac12} (\Rbbb \times \Omega_0; \Lcal)$. Then, 
the Neumann trace operator $\gamma_1 u \in H^{-\frac12, -\frac14} (\Sigma_0)$ 
is the continuous linear form on $H^{\frac12, \frac12} (\Sigma_0)$ defined by
\[ 
  \gamma_1^{\operatorname{int}} u \colon \varphi \mapsto b (u, \Ecal_0 \varphi),
\]	
where $\Ecal_0$ is the extension operator given in Lemma \ref{lem.extension_of_trace}.
\end{definition}

Notice that we can also introduce the conormal derivative 
$\gamma_1^{\operatorname{int}} u \in H^{-\frac12, -\frac14}(\Sigma_0)$ 
as the unique solution of a variational problem, as it is done in \cite[Section 3.4]{Dohr2019}. 
This variational problem can for example be obtained by applying $\gamma_1^{\operatorname{int}} 
u$ to $\varphi$. According to \cite[Proposition 2.18]{Costabel1990}, the Neumann trace 
has the following properties.

\begin{lemma}
\begin{enumerate}[(i)]
\item The map
\[ 
\gamma_1^{\operatorname{int}}\colon  H^{1, \frac12} (\Rbbb \times \Omega_0; \Lcal) 
	\to H^{-\frac12, -\frac14} (\Rbbb \times \Gamma_0)
\]
is continuous and by restriction also the map
\[ 
  \gamma_1^{\operatorname{int}}\colon H^{1, \frac12} (Q_0; \Lcal) \to H^{-\frac12, -\frac14} (\Sigma_0) 
\]
is continuous.
\item If $u \in C^2 (\overline{Q}_0)$, then $\gamma_1^{\operatorname{int}} u 
= (\partial u/\partial \nbfm)|_{\Sigma_0}$ due to the Green formula.
\end{enumerate}
\end{lemma}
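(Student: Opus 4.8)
The plan is to establish the two assertions separately, leaning on the fact that the bilinear form $b(u,v)$ is continuous on $H^{1,\frac12}(\Rbbb\times\Omega_0;\Lcal)\times H^{1,\frac12}(\Rbbb\times\Omega_0)$, which was quoted above from \cite[Lemma 2.16]{Costabel1990}.

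\textbf{Part (i).} Fix $u\in H^{1,\frac12}(\Rbbb\times\Omega_0;\Lcal)$. By definition, $\gamma_1^{\operatorname{int}}u$ is the linear form $\varphi\mapsto b(u,\Ecal_0\varphi)$ on $H^{\frac12,\frac12}(\Sigma_0)$. To see that it is bounded, I would combine the continuity of $b$ with the continuity of the extension operator $\Ecal_0\colon H^{\frac12,\frac14}(\Sigma_0)\to H^{1,\frac12}_{;0,}(Q_0)$ from Lemma \ref{lem.extension_of_trace}; strictly speaking one needs an extension from $H^{\frac12,\frac12}(\Sigma_0)$ into $H^{1,\frac12}(\Rbbb\times\Omega_0)$, which is the variant of $\Ecal_0$ used in \cite[Definition 2.17]{Costabel1990}, so I would invoke that refined extension. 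Then
\[
 |b(u,\Ecal_0\varphi)| \;\lesssim\; \|u\|_{H^{1,\frac12}(\Rbbb\times\Omega_0;\Lcal)}\,\|\Ecal_0\varphi\|_{H^{1,\frac12}(\Rbbb\times\Omega_0)}
  \;\lesssim\; \|u\|_{H^{1,\frac12}(\Rbbb\times\Omega_0;\Lcal)}\,\|\varphi\|_{H^{\frac12,\frac12}(\Sigma_0)},
\]
so $\gamma_1^{\operatorname{int}}u\in[H^{\frac12,\frac12}(\Sigma_0)]' = H^{-\frac12,-\frac14}(\Sigma_0)$ with operator norm controlled by $\|u\|_{H^{1,\frac12}(\Rbbb\times\Omega_0;\Lcal)}$; this is exactly the asserted continuity. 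The ``by restriction'' statement then follows because $Q_0$ is the restriction of $\Rbbb\times\Omega_0$ to the time slab $(0,T)$: given $u\in H^{1,\frac12}(Q_0;\Lcal)$, extend it (boundedly) to $\tilde u\in H^{1,\frac12}(\Rbbb\times\Omega_0;\Lcal)$ by the standard anisotropic Sobolev extension in the time variable, apply the already proven bound to $\tilde u$, and note that $\gamma_1^{\operatorname{int}}\tilde u$ restricted to test functions supported in $\Sigma_0$ depends only on $u$, so the value on $\Sigma_0$ is independent of the chosen extension.

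\textbf{Part (ii).} For $u\in C^2(\overline{Q}_0)$ the claim is the classical Green identity. The key computation is that for $u,v\in C_0^2(\Rbbb\times\overline{\Omega}_0)$ one has the identity $b(u,v)=\int_{\Sigma_0}\frac{\partial u}{\partial\nbfm}\,v\,\drm\sigma\drm t$, which is quoted just before the definition: integrating $\langle\nabla u,\nabla v\rangle$ by parts in space produces $-\int\Delta u\,v + \int_{\Gamma_0}\partial_\nbfm u\,v$, the $\Delta u\,v$ term cancels against the $-(\partial_t-\Delta)u\,v$ term up to the $\partial_t u\,v$ piece, and that piece is exactly the correction $d(u,v)$ built into $b$. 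Hence $b(u,\Ecal_0\varphi)=\int_{\Sigma_0}(\partial u/\partial\nbfm)\,\gamma_0(\Ecal_0\varphi)\,\drm\sigma\drm t=\int_{\Sigma_0}(\partial u/\partial\nbfm)\,\varphi\,\drm\sigma\drm t$, using $\gamma_0\Ecal_0\varphi=\varphi$; since this holds for all $\varphi$ in a dense subset of $H^{\frac12,\frac14}(\Sigma_0)$, the distribution $\gamma_1^{\operatorname{int}}u$ is represented by the function $(\partial u/\partial\nbfm)|_{\Sigma_0}$, which lies in $H^{-\frac12,-\frac14}(\Sigma_0)$ since $u\in C^2(\overline{Q}_0)$.

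The main obstacle is the bookkeeping around the correction term $d(u,v)$ and the distinction between the two extension operators: $\Ecal_0$ as in Lemma \ref{lem.extension_of_trace} maps $H^{\frac12,\frac14}(\Sigma_0)\to H^{1,\frac12}_{;0,}(Q_0)$, whereas the definition of $\gamma_1^{\operatorname{int}}$ needs a right inverse of $\gamma_0$ landing in $H^{1,\frac12}(\Rbbb\times\Omega_0)$ and bounded on $H^{\frac12,\frac12}(\Sigma_0)$; I would make explicit that such an extension exists (it is the one in \cite[Definition 2.17]{Costabel1990}) and that, because $d$ only has a continuous extension to $H^{\frac12}(\Rbbb;L^2(\Omega_0))$-factors rather than $L^2$-factors, the $H^{1,\frac12}$ (not merely $H^{1,0}$) regularity of the test function is exactly what makes $b(u,\cdot)$ well defined. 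Everything else is the routine transfer of \cite[Proposition 2.18]{Costabel1990} to the present notation.
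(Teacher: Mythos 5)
Your overall route is the paper's route: the paper itself gives no argument beyond citing \cite[Proposition 2.18]{Costabel1990}, and its proof of the tube analogue (Lemma \ref{lem.mapping_properties_gamma_1_minus}) consists of exactly your two ingredients — continuity of the bilinear form composed with the continuous extension operator for (i), and Green's first formula together with $\gamma_0\Ecal_0\varphi=\varphi$ and density for (ii). Part (ii) of your write-up is fine as it stands.

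Two points in part (i) need repair, though. First, the ``by restriction'' mechanism you propose would fail as stated: a standard anisotropic Sobolev extension in the time variable sends $u\in H^{1,\frac12}(Q_0;\Lcal)$ to some $\tilde u\in H^{1,\frac12}(\Rbbb\times\Omega_0)$, but it does not control $\Lcal\tilde u$. Outside $(0,T)$ one only knows $\tilde u(t,\cdot)\in H^1(\Omega_0)$, so $\Delta\tilde u(t,\cdot)\in H^{-1}(\Omega_0)$ and there is no reason for $\Lcal\tilde u$ to lie in $L^2(\Rbbb\times\Omega_0)$; for a reflection-type extension one picks up $\partial_t u$ and $\Delta u$ evaluated at reflected times, which are not square-integrable functions for a general element of $H^{1,\frac12}(Q_0;\Lcal)$. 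The intended meaning of ``by restriction'' is simpler: the conormal derivative on the finite cylinder is given directly by $\varphi\mapsto b_{Q_0}(u,\Ecal_0\varphi)$ with the bilinear form taken over $Q_0$ (this is precisely how the paper proceeds on the tube, Definition \ref{def.gamma_1_minus} with $b^-$), and its continuity follows from the finite-cylinder version of the continuity of $b$ plus that of $\Ecal_0$ — no $\Lcal$-preserving extension of $u$ in time is needed. Second, the duality bookkeeping: the test space must be $H^{\frac12,\frac14}(\Sigma_0)$ (the ``$H^{\frac12,\frac12}$'' in the definition is a slip carried over from the paper), and your identification $[H^{\frac12,\frac12}(\Sigma_0)]'=H^{-\frac12,-\frac14}(\Sigma_0)$ is not correct as written; with $H^{\frac12,\frac14}(\Sigma_0)$ in place of $H^{\frac12,\frac12}(\Sigma_0)$ your estimate and conclusion are exactly right, and $\Ecal_0$ from Lemma \ref{lem.extension_of_trace} (respectively its $\Rbbb\times\Omega_0$ variant from \cite[Definition 2.17]{Costabel1990}) is then the correct extension to use.
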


\subsection{Neumann trace operator on non-cylindrical domains}
\label{subsec.neumann_trace_tube}
On time-de\-pen\-dent boundaries, one could consider the usual Neumann 
trace, as it is done for example in \cite[Section 6.1]{Dziri2001}. Instead, we
follow here the idea of \cite{Tausch2019} and employ a velocity corrected Neumann 
trace. We first formally introduce this Neumann trace and afterwards characterize 
its properties rigorously.
\subsubsection{Formal}
For a time dependent spatial surface we define two
Neumann trace operators
\begin{equation}\label{eq.normaltrace}
 \gamma_1^\pm \varphi \isdef \frac{\partial \varphi}{\partial \nbfm_t} 
 	\mp \frac{1}{2} \langle \Vbfm, \nbfm_t \rangle \varphi. 
\end{equation}
To motivate this definition
consider the boundary value problem
\begin{equation}\label{eq.diff_eq_Neumann1}
\begin{aligned}
(\partial_t - \Delta) u & = f   && \ \ \text{in } Q_T,  \\
\gamma_1 u & = g && \ \ \text{on } \Sigma_T, \\
u(0, \cdot) & = 0   &&\ \ \text{in } \Omega_0,
\end{aligned}
\end{equation}
where we leave it a priori open what $\gamma_1$ means.
Let us formally derive the weak formulation of the Neumann 
problem \eqref{eq.diff_eq_Neumann1} by multiplying with a 
test function $v$ with $v(T, \cdot) = 0$ in $\Omega_T$ and 
using Reynolds' transport theorem
\begin{align*}
\int_0^T \int_{\Omega_t} f v \, \drm \xbfm \drm t 
&= \int_0^T \int_{\Omega_t} (\partial_t  - \Delta ) u v \, \drm \xbfm \drm t \\
&= \int_0^T \int_{\Omega_t} \big\{\langle \nabla u, \nabla v \rangle + \partial_t (uv) - u \partial_t v\big\} \, \drm \xbfm \drm t - \int_0^T \int_{\Gamma_t} \frac{\partial u}{\partial \nbfm} v \, \drm \sigma \drm t \\
&= \int_0^T \int_{\Omega_t} \big\{\langle \nabla u, \nabla v \rangle  - u \partial_t v\big\} \, \drm \xbfm \drm t 
+ \int_0^T \frac{\drm}{\drm t} \int_{\Omega_t} uv \, \drm \xbfm \drm t\\
& \hspace*{4.5cm} - \int_0^T \int_{\Gamma_t} \bigg\{\frac{\partial u}{\partial \nbfm} v 
+ u v \langle \Vbfm, \nbfm \rangle\bigg\}  \, \drm \sigma \drm t.
\end{align*}
Due to the fundamental theorem of calculus and the vanishing 
initial and end condition of $u$ and $v$, respectively, we obtain
the variational equation
\[ 
a(u,v) = \int_0^T \int_{\Omega_t} f v \, \drm \xbfm \drm t 
+ \int_0^T \int_{\Gamma_t} \bigg\{\frac{\partial u }{\partial \nbfm} v 
+ \frac12 uv \langle \Vbfm, \nbfm \rangle\bigg\} \, \drm \sigma \drm t 
\]
with bilinear form
\[ 
a(u,v) \isdef \int_0^T \int_{\Omega_t}\big\{\langle \nabla u, \nabla v \rangle 
- u \partial_t v\big\} \, \drm \xbfm \drm t 
- \frac{1}{2}\int_0^T \int_{\Gamma_t} uv \langle \Vbfm, \nbfm \rangle \, \drm \sigma \drm t.\]
Thus, if we set the previously unspecified trace in \eqref{eq.diff_eq_Neumann1} 
as $\gamma_1^{-}$, and we arrive at
\[ 
a(u,v) = \int_0^T \int_{\Omega_t} f v \, \drm \xbfm \drm t 
+ \int_0^T \int_{\Gamma_t} g v \, \drm \sigma \drm t.
\]
With the additonal boundary integral term
  it is easy to see that the bilinear form is bounded
  and coercive in the $H^{1, \frac12}$-norm, so
the Lion's projection theorem guarantees
existence and uniqueness of this Neumann problem. However,
for the cylindrical case \cite[Lemma 2.21]{Costabel1990} 
states that this strategy does not yield satisfactory results, 
since one has to make stronger assumptions on the regularity
of the input data.
Therefore, as in \cite{Costabel1990}, we will proof the existence 
and uniqueness of solutions by using a boundary integral 
formulation (see Corollary \ref{cor.ex_and_unique_Neumann}).

\subsubsection{Rigorous}
We assume $\kappabfm$ to be defined on $\Rbbb \times \Rbbb^d$ 
and not only on $[0,T] \times \Rbbb^d$. Moreover, for sake 
of simplicity in representation, we always consider functions 
$u$ and $v$ throughout this section which satisfy
\begin{equation}\label{eq:assumption}
\int_{\Rbbb} \frac{\drm}{\drm t} \int_{\Omega_t} u v \, \drm \xbfm \drm t = 0.
\end{equation}
This assumption stems from the fact that we would like to 
integrate by parts in time. Later on, we will consider a finite time 
interval $(0,T)$ and equip $u$ and $v$ with the appropriate zero 
initial and end conditions. Extending $u$ and $v$ by zero for 
$t<0$ and $t>T$, respectively, leads then to the fulfillment of 
\eqref{eq:assumption}.

Let us define
\begin{equation}
\label{eq.def_duv}
d(u,v) \isdef \int_{\Rbbb} \int_{\Omega_t} \partial_t u v \, \drm \xbfm \drm t 
+ \frac12  \int_{\Rbbb} \int_{\Gamma_t} \langle \Vbfm, \nbfm \rangle u v\,\drm \sigma \drm t. 
\end{equation}
Notice that the additional boundary term is a speciality 
of the time-dependent boundary. We shall first state 
the analogue of \cite[Lemma 2.6]{Costabel1990}.

\begin{lemma}
The bilinear form $d(u,v)$ has a continuous extension from $C_0^{\infty} 
(\Rbbb^{d+1})\times C_0^{\infty} (\Rbbb^{d+1})$ to $H^{1,\frac12}
\big(\bigcup_{t \in \Rbbb}(\lbrace t \rbrace \times \Omega_t)\big)\times 
H^{1,\frac12}\big(\bigcup_{t \in \Rbbb}(\lbrace t \rbrace\times\Omega_t)\big)$, 
and it holds
\begin{equation}\label{eq.duv_eq_m_dvu}
  d(u,v) = - d(v,u). 
\end{equation}
\end{lemma}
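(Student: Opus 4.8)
The plan is to establish both claims by the same device Costabel uses in \cite[Lemma 2.6]{Costabel1990}, namely pulling everything back to the reference cylinder $Q_0$ via $\kappabfm$ and reducing to the known cylindrical statement, while keeping careful track of the extra boundary term in \eqref{eq.def_duv}. First I would rewrite $d(u,v)$ after the change of variables $\xbfm\mapsto\kappabfm(t,\xbfm)$: the volume part becomes $\int_\Rbbb\int_{\Omega_0}(\partial_t u)\circ\kappabfm\,(v\circ\kappabfm)\,\det(\D\kappabfm)\,\drm\xbfm\drm t$, and using \eqref{eq.id_stefan2} to express $(\partial_t u)\circ\kappabfm$ in terms of $\partial_t(u\circ\kappabfm)$ and $\langle(\D\kappabfm)^{-\intercal}\nabla(u\circ\kappabfm),\partial_t\kappabfm\rangle$. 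Likewise the surface integral over $\Gamma_t$ pulls back to an integral over $\Gamma_0$ against the surface Jacobian of $\kappabfm|_{\Gamma_0}$. The outcome is that $d(u,v)$ is a finite sum of terms, each of the form $\int_{\Rbbb\times\Omega_0}a(t,\xbfm)\,D_1(u\circ\kappabfm)\,D_2(v\circ\kappabfm)\,\drm\xbfm\drm t$ or a boundary analogue, where $a$ is a $C^1$ (uniformly bounded, by \eqref{eq.uniformity_cond}) coefficient and $D_1,D_2$ are first-order operators with at most one time derivative distributed among the two arguments. Each such term is continuous on $H^{1,\frac12}(Q_0)\times H^{1,\frac12}(Q_0)$: the genuinely delicate one is the "time–time" piece $\int\partial_t(u\circ\kappabfm)\,(v\circ\kappabfm)$, which is exactly the bilinear form handled by Costabel's \cite[Lemma 2.6]{Costabel1990} and extends continuously to $H^{\frac12}(\Rbbb;L^2(\Omega_0))\times H^{\frac12}(\Rbbb;L^2(\Omega_0))$; the remaining pieces involve at most one time derivative against spatial derivatives or zeroth-order factors and are therefore bounded by Cauchy–Schwarz in $L^2(\Rbbb;H^1)\times L^2(\Rbbb;L^2)$ (or the trace version for the boundary term, using the spatial trace $H^1(\Omega_0)\to L^2(\Gamma_0)$ in $L^2$ in time). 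Since $C_0^\infty(\Rbbb^{d+1})$ is dense in $H^{1,\frac12}$, continuity on the dense subspace yields the asserted continuous extension.

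For the skew-symmetry \eqref{eq.duv_eq_m_dvu} I would argue on smooth compactly supported $u,v$ and pass to the limit by the continuity just proved. Adding $d(u,v)+d(v,u)$, the volume contributions combine to $\int_\Rbbb\int_{\Omega_t}\partial_t(uv)\,\drm\xbfm\drm t$ and the boundary contributions combine to $\int_\Rbbb\int_{\Gamma_t}\langle\Vbfm,\nbfm\rangle uv\,\drm\sigma\drm t$. Now Reynolds' transport theorem (exactly as used in the formal derivation of the weak Neumann formulation above) gives
\[
\int_{\Omega_t}\partial_t(uv)\,\drm\xbfm + \int_{\Gamma_t}\langle\Vbfm,\nbfm\rangle uv\,\drm\sigma = \frac{\drm}{\drm t}\int_{\Omega_t}uv\,\drm\xbfm,
\]
so $d(u,v)+d(v,u)=\int_\Rbbb\frac{\drm}{\drm t}\int_{\Omega_t}uv\,\drm\xbfm\drm t$, which vanishes by assumption \eqref{eq:assumption} (for compactly supported $u,v$ the integrand is compactly supported in $t$, so the fundamental theorem of calculus applies directly). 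This is the whole point of including the extra boundary term in the definition of $d$: it is precisely the correction that turns $\partial_t(uv)$-integration into a Reynolds total derivative, restoring the skew-symmetry that holds trivially in the cylindrical case.

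The main obstacle is the continuity of the mixed time–space term $\int_{\Rbbb\times\Omega_0}\langle(\D\kappabfm)^{-\intercal}\nabla(u\circ\kappabfm),\partial_t\kappabfm\rangle\,(v\circ\kappabfm)$ and its boundary analogue: a priori this pairs a $t$-derivative of the geometry against a spatial derivative of $u$, and one must check it does not require more than $H^{1,\frac12}$ regularity. It does not, because this term carries a spatial derivative of $u$ but only a $C^1$-bounded time derivative of $\kappabfm$ and no derivative of $v$, so it is controlled by $\|u\|_{L^2(\Rbbb;H^1(\Omega_0))}\|v\|_{L^2(\Rbbb;L^2(\Omega_0))}$ up to the constant $C_{\kappabfm}$ from \eqref{eq.uniformity_cond}; the boundary term is handled the same way after invoking the bounded spatial trace. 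The only term needing the genuine $H^{\frac12}$-in-time machinery is the pure time–time piece, and for that we simply cite Costabel. I would close by remarking that, just as in \cite{Costabel1990}, the continuous extension is then valid on the full product $H^{1,\frac12}(\bigcup_t\{t\}\times\Omega_t)\times H^{1,\frac12}(\bigcup_t\{t\}\times\Omega_t)$ and \eqref{eq.duv_eq_m_dvu} holds throughout by density.
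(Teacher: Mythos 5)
Your proposal is correct and follows essentially the same route as the paper: skew-symmetry via the product rule, Reynolds' transport theorem and assumption \eqref{eq:assumption}, and continuity by reducing to Costabel's cylindrical Lemma 2.6 while noting that the boundary term forces spatial $H^1$-regularity. Your explicit pullback to $Q_0$ with the coefficient bounds from \eqref{eq.uniformity_cond} simply spells out the detail the paper compresses into ``in complete analogy to \cite[Lemma 2.6]{Costabel1990}.''
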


\begin{proof}
The use of Reynolds' transport theorem allows us to compute
\begin{align*}
  d(u,v) &= \int_{\Rbbb} \int_{\Omega_t} \partial_t u v \, \drm \xbfm \drm t 
  	+ \frac12 \int_{\Rbbb} \int_{\Gamma_t} \langle \Vbfm, \nbfm \rangle u v\,\drm \sigma \drm t \\
  &= \int_{\Rbbb} \int_{\Omega_t} \big\{\partial_t (uv) - u \partial_t v\big\} \, \drm \xbfm \drm t  
	+ \frac12  \int_{\Rbbb} \int_{\Gamma_t} \langle \Vbfm, \nbfm \rangle u v\,\drm \sigma \drm t \\
  &= \int_{\Rbbb} \frac{\drm}{\drm t} \int_{\Omega_t} u v \, \drm \xbfm \drm t 
 	- \int_{\Rbbb} \int_{\Omega_t} u \partial_t v \, \drm \xbfm \drm t
  	- \frac12 \int_{\Rbbb} \int_{\Gamma_t} \langle \Vbfm, \nbfm \rangle u v \, \drm \sigma \drm t.
\end{align*}
The assumption \eqref{eq:assumption} hence implies
\[
  d(u,v) = - \int_{\Rbbb} \int_{\Omega_t} u \partial_t v \, \drm \xbfm \drm t 
  	- \frac12 \int_{\Rbbb} \int_{\Gamma_t} \langle \Vbfm, \nbfm \rangle u v \, \drm \sigma \drm t, 
\]
from where \eqref{eq.duv_eq_m_dvu} follows immediately. The rest 
is in complete analogy to \cite[Lemma 2.6]{Costabel1990}, but we need 
higher regularity in the spatial variable instead just $L^2(\Omega_0)$
like in \cite{Costabel1990}, because the boundary term in the definition 
of $d(u,v)$ has to be well-defined. 
\end{proof}

As in Section \ref{subsec.neumann_trace_cylindrical}, we introduce the space 
\[ 
H^{1, \frac12} (Q_T; \Lcal) \isdef \big\lbrace u \in 
H^{1, \frac12} (Q_T) \colon \Lcal u \in L^2 (Q_T) \big\rbrace,
\] 
where $\Lcal \isdef \partial_t - \Delta$ is the differential operator 
on the non-cylindrical domain. We state the analogue of 
\cite[Lemma 2.16]{Costabel1990} in the case of a non-cylindrical 
domain, the proof of which is obvious.

\begin{lemma}\label{lem.used_for_neumann_trace}
The bilinear form
\[ 
  b^-(u,v) \isdef \int_0^T \int_{\Omega_t} \big\{\langle \nabla u, \nabla v \rangle 
  	- (\partial_t - \Delta) u v\big\} \, \drm \xbfm \drm t + d (u,v)
\]
with $d(u,v)$ being defined in \eqref{eq.def_duv} is continuous 
on $H^{1,\frac12}\big( \bigcup_{t \in \Rbbb}(\lbrace t \rbrace 
\times \Omega_t); \partial_t - \Delta\big) \times H^{1, \frac12} \big( \bigcup_{t \in \Rbbb} 
(\lbrace t \rbrace \times \Omega_t) \big)$. If $u, v \in C_0^2 \big( \bigcup_{t \in \Rbbb} 
(\lbrace t \rbrace \times \overline{\Omega}_t)\big)$, we have
\[ 
  b^-(u,v) = \int_0^T \int_{\Gamma_t} \bigg\{\frac{\partial u}{\partial \nbfm} v 
  	+ \frac12 \langle \Vbfm, \nbfm \rangle u v\bigg\} \, \drm \sigma \drm t 
\]
by means of Green's formula.
\end{lemma}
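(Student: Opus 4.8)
The plan is to reduce the non-cylindrical statement to the cylindrical one of \cite[Lemma 2.16]{Costabel1990} by transporting everything through the diffeomorphism $\kappabfm$, while handling the extra boundary term in $d(u,v)$ by the preceding lemma. First I would observe that the three summands in $b^-(u,v)$ are each continuous on the claimed spaces: the term $\int_0^T\int_{\Omega_t}\langle\nabla u,\nabla v\rangle\,\drm\xbfm\drm t$ is controlled by $\|u\|_{H^{1,0}}\|v\|_{H^{1,0}}$; the term $\int_0^T\int_{\Omega_t}(\partial_t-\Delta)u\,v\,\drm\xbfm\drm t$ is controlled by $\|\Lcal u\|_{L^2(Q_T)}\|v\|_{L^2(Q_T)}$, which is finite precisely because $u$ lies in the graph space $H^{1,\frac12}(\,\cdot\,;\Lcal)$; and the term $d(u,v)$ is continuous on $H^{1,\frac12}\times H^{1,\frac12}$ by the lemma just proved (note that the higher spatial regularity $H^{1,\frac12}$ rather than mere $H^{0,\frac12}$ is exactly what makes the boundary integral $\frac12\int\int_{\Gamma_t}\langle\Vbfm,\nbfm\rangle uv$ well-defined, via the spatial trace theorem and the boundedness of $\langle\Vbfm,\nbfm\rangle$ coming from \eqref{eq.uniformity_cond}).

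Next, for the Green's-formula identity I would take $u,v\in C_0^2\big(\bigcup_{t\in\Rbbb}(\{t\}\times\overline{\Omega}_t)\big)$ and compute directly. Unwinding the definition of $d(u,v)$ in \eqref{eq.def_duv},
\[
b^-(u,v)=\int_0^T\int_{\Omega_t}\big\{\langle\nabla u,\nabla v\rangle-(\partial_t-\Delta)u\,v+\partial_t u\,v\big\}\,\drm\xbfm\drm t+\frac12\int_0^T\int_{\Gamma_t}\langle\Vbfm,\nbfm\rangle uv\,\drm\sigma\drm t.
\]
The interior integrand simplifies: $-(\partial_t-\Delta)u\,v+\partial_t u\,v=(\Delta u)v$, so the interior term becomes $\int_0^T\int_{\Omega_t}\{\langle\nabla u,\nabla v\rangle+(\Delta u)v\}\,\drm\xbfm\drm t$, and by the classical (spatial) Green formula on the Lipschitz domain $\Omega_t$, for each fixed $t$, this equals $\int_{\Gamma_t}\frac{\partial u}{\partial\nbfm}v\,\drm\sigma$. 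Integrating in $t$ and adding the remaining half-weighted boundary term yields exactly
\[
b^-(u,v)=\int_0^T\int_{\Gamma_t}\Big\{\tfrac{\partial u}{\partial\nbfm}v+\tfrac12\langle\Vbfm,\nbfm\rangle uv\Big\}\,\drm\sigma\drm t,
\]
as claimed. (Here the only subtlety over the cylindrical case is that the spatial Green identity is applied slicewise on the moving domain $\Omega_t$, which is licit because $\Omega_t=\kappabfm(t,\Omega_0)$ is Lipschitz uniformly in $t$; no time integration by parts intervenes at this stage, since the $\partial_t$ contribution was already folded into $d(u,v)$.)

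The main obstacle — and the reason the authors call the proof ``obvious'' yet still state the lemma — is bookkeeping the boundary term: one must be sure that on the class $C_0^2$ with the standing assumption \eqref{eq:assumption} the passage from the $\int\int_{\Omega_t}\partial_t(uv)$ total-derivative form to the flux form via Reynolds' transport theorem introduces precisely the $\langle\Vbfm,\nbfm\rangle uv$ surface term that cancels half of the explicit boundary term in $d$, leaving the residual $\frac12\langle\Vbfm,\nbfm\rangle uv$. For the continuity assertion the real point to verify is that $\langle\Vbfm,\nbfm_t\rangle$ is a bounded (indeed $C^1$) function on $\Sigma_T$ with bound depending only on $C_{\kappabfm}$, which follows from $\Vbfm=\partial_t\kappabfm\circ\kappabfm^{-1}$ and \eqref{eq.uniformity_cond}; then the spatial trace $H^1(\Omega_t)\to L^2(\Gamma_t)$ together with the interpolation embedding \eqref{eq.interpol_result} gives $H^{1,\frac12}\hookrightarrow$ a space on which the product $uv$ restricted to $\Sigma_T$ is integrable, with norm controlled uniformly in $t$. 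Everything else is, as in \cite[Lemma 2.16]{Costabel1990}, Cauchy–Schwarz and the norm equivalence $\|\cdot\|_{H^{r,s}(Q_T)}\simeq\|\cdot\circ\kappabfm\|_{H^{r,s}(Q_0)}$.
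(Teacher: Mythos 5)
Your proposal is correct and takes essentially the approach the paper intends: the paper dismisses the proof as obvious, being the analogue of \cite[Lemma 2.16]{Costabel1990}, i.e.\ termwise continuity (Cauchy--Schwarz for the gradient and $\Lcal u\,v$ terms, the preceding lemma for $d(u,v)$, with $\langle\Vbfm,\nbfm\rangle$ bounded via \eqref{eq.uniformity_cond}) plus the slicewise spatial Green formula on $\Omega_t$, exactly as you carry out. The only blemish is your closing paragraph: as your own computation in the second paragraph shows, the $\partial_t u\,v$ contributions cancel algebraically and the $\tfrac12\langle\Vbfm,\nbfm\rangle uv$ term from $d(u,v)$ survives untouched, so Reynolds' transport theorem and assumption \eqref{eq:assumption} play no role in this identity (they are needed for the antisymmetry $d(u,v)=-d(v,u)$, not here), and the ``cancels half'' bookkeeping described there is not what happens.
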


In complete analogy to the Neumann trace operator in 
the cylindrical case, we will define $\gamma_1^- u$, which is 
one of the two required Neumann trace operators.

\begin{definition}\label{def.gamma_1_minus}
Given $u \in H^{1, \frac12}\big( \bigcup_{t \in \Rbbb}(\lbrace t \rbrace \times \Omega_t); 
\partial_t - \Delta \big)$, we denote by $\gamma_1^- u \in H^{-\frac12, -\frac14} (\Sigma_T) $ 
the continuous linear form on $H^{\frac12, \frac14} (\Sigma_T)$ defined by
\[ 
\gamma_1^- u\colon \varphi \mapsto b^-(u, \Ecal_0 \varphi),
\]
where $\Ecal_0$ is the extension operator as mentioned in 
Section \ref{subsec.trace_op_non_cylindrical_domains}. 
\end{definition}

The following lemma is the non-cylindrical equivalent to 
\cite[Proposition 2.18]{Costabel1990}.

\begin{lemma}\label{lem.mapping_properties_gamma_1_minus}
The map 
\[ 
  \gamma_1^- \colon H^{1, \frac12} \bigg( \bigcup_{t \in \Rbbb} \big(\lbrace t \rbrace \times \Omega_t \big); 
  	\partial_t - \Delta \bigg) \to H^{-\frac12, -\frac14}  
		\bigg( \bigcup_{t \in \Rbbb} \big(\lbrace t \rbrace \times \Gamma_t \big)\bigg)
\]
is continuous and by restriction also the map
\[ 
  \gamma_1^- \colon H^{1, \frac12} (Q_T; \partial_t - \Delta)\to H^{-\frac12, -\frac14} (\Sigma_T)
\]
is continuous. Moreover, if $u \in C^2 (\overline{Q}_T)$, then it holds 
\[
\gamma_1^- u = \frac{\partial u}{\partial \nbfm} + \frac12 \langle \Vbfm, \nbfm \rangle u.
\]
\end{lemma}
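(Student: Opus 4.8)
The plan is to mirror the cylindrical argument of \cite[Proposition 2.18]{Costabel1990}, exploiting the three ingredients already available: the continuity of $b^-$ from Lemma \ref{lem.used_for_neumann_trace}, the mapping properties of the Dirichlet trace and its right inverse $\Ecal_0$ on the non-cylindrical domain from Section \ref{subsec.trace_op_non_cylindrical_domains}, and Green's formula for $C_0^2$ functions (the identity $b^-(u,v)=\int_0^T\int_{\Gamma_t}\{\partial u/\partial\nbfm\, v + \tfrac12\langle\Vbfm,\nbfm\rangle uv\}\,\drm\sigma\drm t$, likewise from Lemma \ref{lem.used_for_neumann_trace}).

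First I would establish continuity on the full space $H^{1,\frac12}\big(\bigcup_{t\in\Rbbb}(\{t\}\times\Omega_t);\partial_t-\Delta\big)$. For fixed $u$ in this space, $\varphi\mapsto b^-(u,\Ecal_0\varphi)$ is linear, and by the continuity of $b^-$ together with the boundedness of $\Ecal_0\colon H^{\frac12,\frac14}(\Sigma_T)\to H^{1,\frac12}_{;0,}(Q_T)$ we get
\[
|b^-(u,\Ecal_0\varphi)| \;\lesssim\; \|u\|_{H^{1,\frac12}(\,\cdot\,;\partial_t-\Delta)}\,\|\Ecal_0\varphi\|_{H^{1,\frac12}} \;\lesssim\; \|u\|_{H^{1,\frac12}(\,\cdot\,;\partial_t-\Delta)}\,\|\varphi\|_{H^{\frac12,\frac14}(\Sigma_T)},
\]
so that $\gamma_1^-u$ is a well-defined element of $H^{-\frac12,-\frac14}(\Sigma_T)$ with $\|\gamma_1^-u\|\lesssim\|u\|$; this gives linearity and continuity of $\gamma_1^-$ as a map into that dual space. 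The restriction statement to $H^{1,\frac12}(Q_T;\partial_t-\Delta)$ follows because this is a (bounded) restriction of the same operator, exactly as in the cylindrical case.

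Next I would verify the pointwise formula for $u\in C^2(\overline{Q}_T)$. One needs to check that for such $u$ the linear form $\varphi\mapsto b^-(u,\Ecal_0\varphi)$ coincides with the functional $\varphi\mapsto \int_0^T\int_{\Gamma_t}\big(\tfrac{\partial u}{\partial\nbfm}+\tfrac12\langle\Vbfm,\nbfm\rangle u\big)\varphi\,\drm\sigma\drm t$. The argument is: take $\varphi\in H^{\frac12,\frac14}(\Sigma_T)$, approximate it by smooth boundary data and correspondingly $\Ecal_0\varphi$ by $C_0^2$ functions $v_n$ on $\bigcup_t(\{t\}\times\overline\Omega_t)$ with $\gamma_{0,t}v_n\to\varphi$; for each $v_n$ the Green identity in Lemma \ref{lem.used_for_neumann_trace} gives $b^-(u,v_n)=\int_0^T\int_{\Gamma_t}\{\tfrac{\partial u}{\partial\nbfm}v_n+\tfrac12\langle\Vbfm,\nbfm\rangle u v_n\}\,\drm\sigma\drm t$, and since this right-hand side depends on $v_n$ only through its Dirichlet trace, passing to the limit using continuity of $b^-$ on the left and of the boundary integral (the coefficients $\partial u/\partial\nbfm$ and $\langle\Vbfm,\nbfm\rangle u$ are bounded, being continuous on the compact lateral boundary) identifies $\gamma_1^-u$ with the claimed trace. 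Equivalently one invokes the density of $C_0^2$ functions and the already-established continuity of $\gamma_1^-$ directly.

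The main obstacle is bookkeeping around the velocity term $\tfrac12\langle\Vbfm,\nbfm\rangle u$: one must confirm that this boundary coefficient is genuinely bounded and that the boundary integral $\int_0^T\int_{\Gamma_t}\langle\Vbfm,\nbfm\rangle u\varphi\,\drm\sigma\drm t$ defines a continuous functional of $\varphi\in H^{\frac12,\frac14}(\Sigma_T)$ — this is where the $C^2$ regularity of $\kappabfm$ and the uniformity condition \eqref{eq.uniformity_cond} enter, since $\Vbfm=\partial_t\kappabfm\circ\kappabfm^{-1}$ is then uniformly bounded with bounded first spatial derivatives, and multiplication by such a function is continuous on the anisotropic boundary spaces (as was used repeatedly in the Remark following \eqref{eq.uniformity_cond}). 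Once this is in hand, every other step is a verbatim transcription of \cite[Proposition 2.18]{Costabel1990} with $b$ replaced by $b^-$ and the cylinder replaced by the tube.
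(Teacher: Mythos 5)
Your proposal is correct and follows the same route as the paper's proof: continuity of $\gamma_1^-$ is deduced from the continuity of $b^-$ (Lemma \ref{lem.used_for_neumann_trace}) together with the boundedness of the extension operator $\Ecal_0$, and the pointwise formula for $u\in C^2(\overline{Q}_T)$ is obtained from Green's first formula, exactly as in \cite[Proposition 2.18]{Costabel1990}. Your additional remarks on the boundedness of the velocity term and the density argument merely spell out details the paper leaves implicit.
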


\begin{proof}
As in \cite{Costabel1990}, the continuity is a consequence of 
the continuity of the bilinear form $b(\cdot, \cdot)$ (cf.~Lemma 
\ref{lem.used_for_neumann_trace}). The second statement follows 
immediately from Green's first formula.
\end{proof}

\begin{remark}
In view of the reformulation of the heat equation in terms of 
boundary integral equations, we will moreover encounter a 
second Neumann trace operator, which we denote by 
$\gamma_1^+$. It can be achieved analogously to 
above by considering the differential operator 
$\partial_t + \Delta$ instead of $\partial_t - \Delta$. The 
former operator for example arises when considering a 
time reversal of the latter one. With
\[ 
b^{+ }(u,v) \isdef \int_0^T \int_{\Omega_t} \big\{\langle \nabla u, \nabla v \rangle 
	+ (\partial_t + \Delta) u v\big\} \, \drm \xbfm \drm t - d(u,v), 
\]
we can state the analogue of Lemma \ref{lem.used_for_neumann_trace}, 
namely the continuity of $b^{+}(\cdot, \cdot)$ in the appropriate space and 
for $u$ and $v$ smooth enough, we have
\[ 
  b^{+} (u,v) = \int_0^T \int_{\Gamma_t} \bigg\{\frac{\partial u}{\partial \nbfm} v 
  - \frac12  \langle \Vbfm, \nbfm \rangle u v\bigg\} \, \drm \sigma \drm t. 
\]
With this property at hand, we can define the trace operator 
$\gamma_1^+$ in analogy to Definition \ref{def.gamma_1_minus}. 
For $u$ smooth enough, it then holds 
\[ 
  \gamma_1^+ u = \frac{\partial u}{\partial \nbfm} - \frac12 \langle \Vbfm, \nbfm \rangle u.
\]
The existence of two Neumann trace operators is a speciality of 
the time-dependent boundary.
\end{remark}

Likewise to \cite[Formula (2.35)]{Costabel1990}, given
$u\in H^{1, \frac12}\big(\bigcup_{t \in \Rbbb}(\lbrace t \rbrace \times \Omega_t); 
\partial_t - \Delta\big)$ and $v \in H^{1, \frac12}\big( \bigcup_{t \in \Rbbb}
(\lbrace t \rbrace \times \Omega_t)\big)$, we obtain Green's first formula
\begin{equation}\label{eq.first_green_formula}
   \int_{\Rbbb} \int_{\Omega_t}\langle \nabla u, \nabla v \rangle\, \drm \xbfm \drm t + d(u,v) 
   = \int_{\Rbbb} \int_{\Omega_t} (\partial_t - \Delta) u v \, \drm \xbfm \drm t 
   + \langle \gamma_1^- u, \gamma_0 v \rangle. 
\end{equation}
By restriction, this formula also holds for $u \in H^{1, \frac12 }_{;0,} 
(Q_T; \partial_t - \Delta)$ and $v \in H^{1, \frac12}_{;,0} (Q_T)$,
but not, as was pointed out in \cite{Costabel1990}, when $u,v$
are both in $H^{1,\frac12}_{;0,} (Q_T)$.

In complete analogy, Green's formula for $u \in H^{1, \frac12}\big( \bigcup_{t \in \Rbbb} 
(\lbrace t \rbrace \times \Omega_t); \partial_t +\Delta\big)$ and $v \in
H^{1, \frac12}\big(\bigcup_{t \in \Rbbb}(\lbrace t \rbrace\times \Omega_t)\big)$ reads 
\[ 
\int_{\Rbbb} \int_{\Omega_t}\langle \nabla u, \nabla v \rangle\, \drm \xbfm \drm t - d(u,v) 
= \int_{\Rbbb} \int_{\Omega_t} (- \partial_t - \Delta) u v \, \drm \xbfm \drm t + \langle \gamma_1^+ u, \gamma_0 v \rangle. 
\]
Again, by restriction, this formula also holds for $u \in H^{1, \frac12 }_{;,0} 
(Q_T; \partial_t + \Delta)$ and $v \in H^{1, \frac12}_{;0,} (Q_T)$.

We can now formulate Green's formulas for a finite time interval, the 
time-independent analogues of which are given in \cite[Proposition 2.19]{Costabel1990}. 

Notice that \cite{Costabel1990} introduces a time reversal map. For a time-dependent 
domain, this approach does not make sense, since the integration over a time forward 
tube of a time reversed entity is not always well defined. Therefore, we choose a slightly 
different approach to obtain a further Green formula.

\begin{lemma}\label{lem.green_formula_tube}~
\begin{enumerate}[(i)]
\item Let $u \in H^{1, \frac12}_{;0,}\big( \bigcup_{t \in \Rbbb_+}(\lbrace t \rbrace \times \Omega_t); 
\partial_t - \Delta\big)$ and $v \in H^{1, \frac12}_{;,0}\big(\bigcup_{- \infty < t < t_0}(\lbrace t \rbrace \times \Omega_t)\big)$. 
Then, for $t_0>0$, there holds Green's first formula
\[
  \int_0^{t_0} \int_{\Omega_t}\langle \nabla u, \nabla  v\rangle 
  	\, \drm \xbfm \drm t + d  (u,  v) = \int_0^{t_0} \int_{\Omega_t} (\partial_t - \Delta) u v \, \drm \xbfm \drm t 
  	+ \langle \gamma_1^- u, \gamma_0  v \rangle.
\]
\item Let $u \in H^{1, \frac12}_{;,0}\big( \bigcup_{- \infty < t < t_0}(\lbrace t \rbrace \times \Omega_t); 
\partial_t + \Delta\big)$ and $v \in H^{1, \frac12}_{;0,}\big(\bigcup_{t \in \Rbbb_+}(\lbrace t \rbrace \times \Omega_t)\big)$. 
Then, for $t_0>0$, there holds Green's alternative first formula
\[ 
\int_0^{t_0} \int_{\Omega_t}\langle \nabla u, \nabla v \rangle\, \drm \xbfm \drm t - d(u,v) 
= \int_0^{t_0} \int_{\Omega_t} (- \partial_t - \Delta) u v \, \drm \xbfm \drm t + \langle \gamma_1^+ u, \gamma_0 v \rangle. 
\]

\item Let $u \in H^{1, \frac12}_{;0,}\big( \bigcup_{t \in \Rbbb_+}(\lbrace t \rbrace \times \Omega_t); 
\partial_t - \Delta\big)$ and $v \in H^{1, \frac12}_{;,0}\big( \bigcup_{- \infty < t < t_0}(\lbrace t \rbrace \times \Omega_t); 
\partial_t + \Delta\big)$.
Then, for $t_0>0$, there holds Green's second formula 
\[
  \int_0^{t_0} \int_{\Omega_t} \big\{(\partial_t - \Delta) u  v 
  	+ u (\partial_t + \Delta) v\big\} \, \drm \xbfm \drm t = \langle \gamma_0  u, \gamma_1^+  v \rangle 
	- \langle \gamma_1^- u, \gamma_0  v \rangle.
\]
\end{enumerate}
\end{lemma}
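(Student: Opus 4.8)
The plan is to deduce each of the three identities from the two global Green formulas over $\bigcup_{t\in\Rbbb}(\{t\}\times\Omega_t)$ established above --- namely \eqref{eq.first_green_formula} and its $\partial_t+\Delta$ counterpart displayed just afterwards --- by extending the data by zero in time so that the standing assumption \eqref{eq:assumption} is met.

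For part~(i), I would first extend $u$ by zero to $\{t<0\}$ and $v$ by zero to $\{t>t_0\}$, obtaining $\bar u,\bar v$ on the whole tube $\bigcup_{t\in\Rbbb}(\{t\}\times\Omega_t)$. Since $u$ has vanishing initial trace and $v$ vanishing end trace, these zero extensions lie in $H^{1,\frac12}\big(\bigcup_{t\in\Rbbb}(\{t\}\times\Omega_t);\partial_t-\Delta\big)$ and $H^{1,\frac12}\big(\bigcup_{t\in\Rbbb}(\{t\}\times\Omega_t)\big)$, respectively --- this is precisely how the zero-initial- and zero-end-condition spaces are defined, transported from $Q_0$ via $\kappabfm$ --- and moreover $\Lcal\bar u=(\Lcal u)\mathbf{1}_{\{t>0\}}\in L^2$, since the vanishing initial value excludes a distributional jump at $t=0$. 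The product $\bar u\bar v$ is supported in $\{0\le t\le t_0\}$ (one factor kills $t<0$, the other $t>t_0$), hence $\int_{\Rbbb}\frac{\drm}{\drm t}\int_{\Omega_t}\bar u\bar v\,\drm\xbfm\drm t=0$ by the fundamental theorem of calculus and \eqref{eq:assumption} holds for $(\bar u,\bar v)$; thus \eqref{eq.first_green_formula} applies, and since every integrand there, as well as the integrand of the pairing $\langle\gamma_1^-\bar u,\gamma_0\bar v\rangle$, vanishes outside $\{0\le t\le t_0\}$, all integrals collapse onto $(0,t_0)$ and (i) drops out (locality of the conormal derivative identifies the restricted trace with $\gamma_1^- u$). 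Part~(ii) is the mirror image: extend $u$ by zero to $\{t>t_0\}$ and $v$ by zero to $\{t<0\}$, verify as before that $\bar u\in H^{1,\frac12}\big(\bigcup_{t\in\Rbbb}(\{t\}\times\Omega_t);\partial_t+\Delta\big)$ and $\bar v\in H^{1,\frac12}\big(\bigcup_{t\in\Rbbb}(\{t\}\times\Omega_t)\big)$, apply the $\partial_t+\Delta$ global formula, and restrict the integrals and the pairing $\langle\gamma_1^+\bar u,\gamma_0\bar v\rangle$ back to $(0,t_0)$.

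Green's second formula is then purely algebraic. I would apply~(i) to the pair $(u,v)$ and~(ii) to the pair $(v,u)$ --- the memberships hold because $H^{1,\frac12}_{;,0}(\,\cdot\,;\partial_t+\Delta)\subset H^{1,\frac12}_{;,0}(\,\cdot\,)$ and $H^{1,\frac12}_{;0,}(\,\cdot\,;\partial_t-\Delta)\subset H^{1,\frac12}_{;0,}(\,\cdot\,)$ --- and subtract the two identities. The symmetric term $\int_0^{t_0}\int_{\Omega_t}\langle\nabla u,\nabla v\rangle\,\drm\xbfm\drm t$ cancels, the two $d$-contributions combine to $d(u,v)+d(v,u)=0$ by the antisymmetry \eqref{eq.duv_eq_m_dvu} (legitimate here because, after the zero extensions, both orderings of the product are time-compactly supported and satisfy \eqref{eq:assumption}), and rewriting $\langle\gamma_1^+ v,\gamma_0 u\rangle=\langle\gamma_0 u,\gamma_1^+ v\rangle$ by symmetry of the duality pairing, what survives is exactly the stated identity.

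I expect the only genuinely delicate point to be the first step of~(i) and~(ii): that extension by zero across $t=0$ (resp.\ $t=t_0$) maps the corresponding zero-condition anisotropic space boundedly into the space over the doubly-infinite tube, including the property $\Lcal\bar u\in L^2$. This is the non-cylindrical counterpart of the truncation lemma underpinning \cite{Costabel1990}: the temporal $H^{1/2}$-seminorm is sensitive to truncation, but the spaces $H^s_{0,}$ and $H^s_{,0}$ are defined precisely as those whose zero extensions remain in $H^s(\Rbbb;L^2)$, and since $\|w\|_{H^{r,s}(Q_T)}=\|w\circ\kappabfm\|_{H^{r,s}(Q_0)}$ by definition while \eqref{eq.id_stefan2} shows the pull-back by $\kappabfm$ only mixes in bounded spatial derivatives, this property is preserved. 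Once this bookkeeping is settled, the three formulas follow from the elementary manipulations above.
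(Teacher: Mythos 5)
Your proposal is correct and follows essentially the same route as the paper: parts (i) and (ii) are obtained by restricting the global formulas \eqref{eq.first_green_formula} and its $\partial_t+\Delta$ counterpart (which the paper asserts directly, while you spell out the zero-extension argument behind it), and part (iii) is derived exactly as in the paper by applying (ii) with $u$ and $v$ interchanged and invoking the antisymmetry \eqref{eq.duv_eq_m_dvu}. Your extra care about the zero extensions preserving membership in the $H^{1,\frac12}(\,\cdot\,;\Lcal)$ spaces is precisely the content hidden behind the paper's ``by restriction'' and is consistent with the definitions of the $H^s_{0,}$ and $H^s_{,0}$ spaces.
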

\begin{proof}
Statements (i) and (ii) are clear. Statement (iii) follows then
immediately from these by interchanging $v$ and $u$ in (ii) 
and using \eqref{eq.duv_eq_m_dvu}.
\end{proof}

We need the tube equivalent of \cite[Lemma 2.22]{Costabel1990}. In there, the space
\[ \widetilde{C}^{\infty} (\overline{Q}_0) \isdef C_0^\infty \big((0,T] \times\overline{\Omega}_0 \big)\]
is defined as the space of the restrictions of functions in $C_0^\infty (\Rbbb_+ \times \Rbbb^d)$ 
to $\overline{Q}_0$. This space $\widetilde{C}^\infty (\overline{Q}_0)$ is dense in $H^{1, \frac12}_{;0,} 
(Q_0; \partial_t - \Delta)$ according to \cite[Lemma 2.22]{Costabel1990}. As we only consider a 
$C^2$-mapping between the reference cylinder and the tube, we will prove the analogue 
result only for $C^2$-functions. 

\begin{lemma}\label{lem.density_C2_H1}
Let us define
\[ 
  \widetilde{C}^2 (\overline{Q}_T) \isdef \Big\lbrace u \colon u \circ \kappabfm 
  	\in C^2_0 \big( (0,T] \times \overline{\Omega}_0 \big) \Big\rbrace.
\]
Then, the space $\widetilde{C}^2 (\overline{Q}_T)$  is dense in 
$H^{1, \frac12}_{;0,} (Q_T; \partial_t - \Delta)$. 
\end{lemma}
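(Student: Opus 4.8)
The statement to prove is that $\widetilde{C}^2(\overline{Q}_T)$, defined as the pullback of $C^2_0((0,T]\times\overline{\Omega}_0)$ under $\kappabfm$, is dense in $H^{1,\frac12}_{;0,}(Q_T;\partial_t-\Delta)$. The plan is to reduce everything to the cylindrical case via the diffeomorphism $\kappabfm$, so that the density follows from the already-cited result \cite[Lemma 2.22]{Costabel1990} (adapted to $C^2$) on the reference cylinder. The key point is that the map $u\mapsto u\circ\kappabfm$ is a linear homeomorphism of the relevant function spaces, and it sends $\widetilde{C}^2(\overline{Q}_T)$ onto $\widetilde{C}^2(\overline{Q}_0)$ by the very definition of the former.

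I would proceed as follows. First, observe that, by definition of the $H^{r,s}(Q_T)$-norm as $\|u\|_{H^{r,s}(Q_T)}=\|u\circ\kappabfm\|_{H^{r,s}(Q_0)}$ together with the norm equivalence discussed in the remarks of Section~\ref{subsec.anisotropic_Sobolev}, the composition operator $T_{\kappabfm}\colon u\mapsto u\circ\kappabfm$ is a bounded linear isomorphism from $H^{1,\frac12}_{;0,}(Q_T)$ onto $H^{1,\frac12}_{;0,}(Q_0)$ with bounded inverse $T_{\kappabfm^{-1}}$; the zero initial condition is preserved because $\kappabfm$ acts only on the spatial variable and $\kappabfm(0,\cdot)$ is a diffeomorphism of $\Omega_0$. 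Second, I would check that $T_{\kappabfm}$ also maps $H^{1,\frac12}_{;0,}(Q_T;\Lcal)$ boundedly onto $H^{1,\frac12}_{;0,}(Q_0;\Lcal)$: this requires showing that $\Lcal u\in L^2(Q_T)$ if and only if $\Lcal_0(u\circ\kappabfm)\in L^2(Q_0)$, where $\Lcal_0\isdef\partial_t-\Delta$ acts in the reference coordinates. Here one uses the transformation rule \eqref{eq.id_stefan2} for the time derivative together with the chain rule for the Laplacian; since $\kappabfm\in C^2$ with bounds \eqref{eq.uniformity_cond}, the transformed operator $T_{\kappabfm}\Lcal T_{\kappabfm^{-1}}$ is a second-order operator in the reference variables whose coefficients are bounded (lower-order terms involve $\partial_t\kappabfm$ and $\D^2\kappabfm$, both controlled by $C_{\kappabfm}$), so $L^2$-membership is preserved in both directions. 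Consequently the graph norms on the two $\Lcal$-spaces are equivalent under $T_{\kappabfm}$.

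Third, given any $u\in H^{1,\frac12}_{;0,}(Q_T;\Lcal)$, set $\hat u\isdef u\circ\kappabfm\in H^{1,\frac12}_{;0,}(Q_0;\Lcal)$. By the ($C^2$-version of) \cite[Lemma 2.22]{Costabel1990}, there is a sequence $\hat u_n\in\widetilde{C}^2(\overline{Q}_0)=C^2_0((0,T]\times\overline{\Omega}_0)$ with $\hat u_n\to\hat u$ in $H^{1,\frac12}_{;0,}(Q_0;\Lcal)$. Define $u_n\isdef\hat u_n\circ\kappabfm^{-1}$; then $u_n\circ\kappabfm=\hat u_n\in C^2_0((0,T]\times\overline{\Omega}_0)$, so $u_n\in\widetilde{C}^2(\overline{Q}_T)$ by definition, and applying the bounded inverse $T_{\kappabfm^{-1}}$ on the $\Lcal$-spaces gives $u_n\to u$ in $H^{1,\frac12}_{;0,}(Q_T;\Lcal)$. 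This proves density.

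The main obstacle I anticipate is the second step: verifying carefully that $\Lcal u\in L^2(Q_T)$ transfers to $\Lcal_0(u\circ\kappabfm)\in L^2(Q_0)$ and back. One must be precise that $\partial_t u$ in the non-cylindrical sense, when pulled back, produces the mixed term of \eqref{eq.id_stefan2} — so $T_{\kappabfm}\Lcal T_{\kappabfm^{-1}}$ is genuinely a full second-order operator with first-order and zeroth-order corrections — and then argue that, because only $C^2$-regularity of $\kappabfm$ and $\kappabfm^{-1}$ is available (not $C^\infty$), these coefficients are merely bounded and measurable, which is still enough to preserve $L^2$-membership of $\Lcal u$ but \emph{not} enough to preserve, say, $C^\infty$-smoothness. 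This is precisely why the lemma is stated for $C^2$-functions rather than $C^\infty$-functions, and the proof must respect that limitation: the pulled-back approximants $u_n$ are only $C^2$, and $\widetilde{C}^2(\overline{Q}_T)$ is the correct dense class. Once this bookkeeping is in place, the remainder is the formal transport argument above.
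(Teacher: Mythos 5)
Your reduction breaks down at its second step. The spaces $H^{r,s}(Q_T)$ are defined by pullback, so $u\mapsto u\circ\kappabfm$ is indeed an isomorphism of $H^{1,\frac12}_{;0,}(Q_T)$ onto $H^{1,\frac12}_{;0,}(Q_0)$; but it is \emph{not} an isomorphism between the graph spaces $H^{1,\frac12}_{;0,}(Q_T;\partial_t-\Delta)$ and $H^{1,\frac12}_{;0,}(Q_0;\partial_t-\Delta)$. Pulling back, the condition $(\partial_t-\Delta)u\in L^2(Q_T)$ translates (up to bounded multiplicative factors) into $(\partial_t-\Mcal)(u\circ\kappabfm)\in L^2(Q_0)$, with $\Mcal$ the transported operator appearing in the proof of Lemma \ref{lem.norm_equivalence_H_V}; its principal part is $\on{div}\big((\D\kappabfm)^{-1}(\D\kappabfm)^{-\intercal}\nabla\cdot\big)$, which differs from $\Delta$ by a genuinely \emph{second-order} operator with variable coefficients unless $\D\kappabfm$ is orthogonal. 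For an element of the graph space you control only $u\in H^{1,\frac12}$ together with the single combination $(\partial_t-\Delta)u$; individual second spatial derivatives are not in $L^2$ up to $\Sigma_T$ (think of a single layer potential with density in $H^{-\frac12,-\frac14}(\Sigma_T)$ --- precisely the functions the graph space is built to accommodate, since no boundary condition is imposed and interior parabolic regularity does not extend to the lateral boundary). Hence $(\partial_t-\Mcal)\hat u\in L^2(Q_0)$ does not imply $(\partial_t-\Delta)\hat u\in L^2(Q_0)$, so the appeal to \cite[Lemma~2.22]{Costabel1990}, which concerns the constant-coefficient heat operator on $Q_0$, is not available; your third step suffers from the same defect in the reverse direction, since convergence in the $Q_0$-graph norm does not give $\Lcal(u_n-u)\to 0$ in $L^2(Q_T)$. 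Repairing this would amount to proving a Lemma-2.22-type density statement for the variable-coefficient operator $\partial_t-\Mcal$, which is not a citation but a new proof of essentially the same difficulty as the lemma itself.

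This is exactly why the paper does not transport the graph-space density through $\kappabfm$, but instead repeats Costabel's Hahn--Banach duality argument directly on the tube: a bounded functional $\ell$ vanishing on $\widetilde{C}^2(\overline{Q}_T)$ is represented by a pair $(f,g)$ via an extension operator, testing against $C^2_0$-functions yields $f=(\partial_t+\Delta)g$ on $\Rbbb_+\times\Rbbb^d$ and hence $g|_{Q_T}\in H^{1,\frac12}_{0;,0}(Q_T)$; this $g$ is then approximated by $C^2_0$-functions through the pullback --- legitimate at that point because only the plain $H^{1,\frac12}$-norm, which \emph{is} defined through $\kappabfm$, is involved --- and one concludes $\ell=0$, so density follows by the Hahn--Banach criterion. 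In short, the diffeomorphism may be used wherever the norms are literally defined through it (your first step, and the approximation of $g$), but never to identify the two graph spaces.
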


\begin{proof}
We mimic the proof of \cite[Lemma 2.22]{Costabel1990}, which is based on 
a proof of Grisvard in the elliptic case, see \cite[Lemma 1.5.3.9]{Grisvard1985}.
According to the proof of \cite[Lemma 2.22]{Costabel1990}, we have that 
$C_0^\infty \big( (0,T] \times \overline{\Omega}_0 \big)$ is dense in 
$H^{1,\frac12}_{;0,} (Q_0)$. Therefore, also $C_0^2 \big((0,T] \times 
\overline{\Omega}_0 \big)$ is dense in $H^{1,\frac12}_{;0,} (Q_0)$. 
Due to the definition of the spaces on the tube via the mapping 
$\kappabfm$ and the resulting equivalence of norms, we also obtain 
that $\widetilde{C}^2 (\overline{Q}_T) $ is dense in $H^{1,\frac12}_{;0,} (Q_T)$. 
Similarly, we obtain that 
\[
\widetilde{C}^2 (Q_T) \isdef \Big\lbrace u \colon u \circ \kappabfm 
\in C_0^2 \big( (0,T] \times \Omega_0 \big) \Big\rbrace
\]
is dense in $H^{1,\frac12}_{0;0,} (Q_T)$. 

Let $\Rcal$ be an extension operator from $H^{1, \frac12}_{;0,} (Q_T)$
to $H^{1, \frac12} (\Rbbb^{d+1})$. It thus holds $(\Rcal u)|_{Q_T} = u$. 
As in \cite{Costabel1990}, let us choose $\Rcal$ such that $\supp\Rcal
\subset [0,\infty) \times \Rbbb^d$.\footnote{Such an extension operator exists
as it can be defined by $\Rcal u = \big(\widetilde{\Rcal}(u \circ \kappabfm)\big) 
\circ\kappabfm^{-1}$ with $\widetilde{\Rcal}: H^{1, \frac12}_{;0,}(Q_0)\to 
H^{1,\frac12}(\Rbbb_+ \times \Rbbb^d)$ being the extension operator from 
\cite{Costabel1990}.} In that way, we can identify $H^{1, \frac12}_{;0,} (Q_T)$ 
with a closed subspace of $H^{1, \frac12}_{;0,} (\Rbbb_+ \times \Rbbb^d)$. 
The map $u \mapsto \big( \Rcal u, (\partial_t - \Delta) u \big)$ identifies 
$H^{1, \frac12}_{;0,} (Q_T; \partial_t - \Delta)$ with a closed subspace 
of $H^{1, \frac12}_{;0,} (\Rbbb_+ \times \Rbbb^d) \times L^2 (\Rbbb_+ 
\times \Rbbb^d)$. Due to this identification, we find for every bounded 
linear functional $\ell:H^{1, \frac12}_{;0,} (Q_T; \partial_t - \Delta)\to\Rbbb$ 
some $f \in \big( H^{1, \frac12}_{;0,} (\Rbbb_+ \times \Rbbb^d) \big)'
= H^{-1,-\frac12} (\Rbbb_+ \times \Rbbb^d)$ and $g \in L^2 
(\Rbbb_+ \times \Rbbb^d)$ such that it holds
\[ 
  \langle \ell, u \rangle = \langle f, \Rcal u \rangle + \int_{\Rbbb_+ \times \Rbbb^d} 
  	g (\partial_t - \Delta) u \, \drm(t,\xbfm)
\]
for all $u \in H^{1, \frac12}_{;0,} (Q_T; \partial_t - \Delta)$. Since $\ell$ 
acts only on $u$, which is supported on $\overline{Q}_T$, we may assume
that $\supp f \subset\overline{Q}_T$ and $\supp g \subset \overline{Q}_T$. 

We shall suppose next that it holds $\langle \ell, \varphi \rangle = 0$ for all 
$\varphi \in \widetilde{C}^2 (\overline{Q}_T)$. If we can show $\ell = 0$, 
we obtain the desired density result in accordance with \cite[Korollar III.1.9]
{Werner2018}. For all $\varphi \in C_0^2(\Rbbb_+ \times \Rbbb^d)$, we conclude
\begin{align*}
0 = \langle \ell, \varphi \rangle 
&= \langle f, \varphi \rangle  + \int_{Q_T} g (\partial_t - \Delta) \varphi \,\drm(t,\xbfm)\\
& = \langle f, \varphi \rangle + \int_{\Rbbb_+ \times \Rbbb^d} g(\partial_t - \Delta) \varphi \,\drm(t,\xbfm).
\end{align*}
This equation states that 
\[ 
  f = (\partial_t + \Delta) g
\]
holds on $\Rbbb_+ \times \Rbbb^d$ in complete analogy to \cite{Costabel1990}. 
Due to $f \in H^{-1,-\frac12} (\Rbbb_+ \times \Rbbb^d)$ and the differential operator, 
we find $g \in H^{1, \frac12} (\Rbbb_+ \times \Rbbb^d)$ and, thus, $g|_{Q_T} \in 
H^{1, \frac12}_{0;,0} (Q_T)$. 

On a cylindrical domain, any function $h\in H^{1, \frac12}_{0;,0} (Q_0)$ 
can be approximated by a series $h_n \in C_0^\infty \big( (-\infty ,T) \times \Omega_0\big)$ 
(see \cite[Proof of Lemma 2.22]{Costabel1990}). Hence, by choosing $h\isdef g\circ\kappabfm$
and setting $g_n \isdef h_n\circ\kappabfm^{-1}$, we can approximate $g|_{Q_T}\in H^{1, \frac12}_{0;,0}(Q_T)$ 
by a series $g_n\in C_0^2 \big(\bigcup_{-\infty < t< T}(\lbrace t \rbrace \times \Omega_t)\big)$ 
in the norm of $H^{1, \frac12}\big(\bigcup_{0 < t< \infty}(\lbrace t \rbrace \times \Omega_t)\big)$.
Thus, denoting by $\hat{g}_n$ the extension of $g_n$ by zero outside of $Q_T$, we find
$(\partial_t + \Delta) \hat{g}_n \to f$ in $H^{-1, -\frac12} (\Rbbb_+ \times \Rbbb^d)$. 
We then conclude for any $u \in H^{1, \frac12}_{;0,} (Q_T; \partial_t - \Delta)$ that
\begin{align*}
  \langle \ell, u \rangle &= \lim_{n \to \infty} \left[ \big\langle (\partial_t + \Delta) \hat{g}_n, 
  	\Rcal u \big\rangle + \int_{Q_T} g_n (\partial_t - \Delta) u \, \drm (t,\xbfm) \right] \\
  &= \lim_{n \to \infty} \left[ \int_{Q_T} (\partial_t + \Delta) g_n u \, \drm (t,\xbfm) 
  	+ \int_{Q_T} g_n (\partial_t - \Delta) u \, \drm (t,\xbfm) \right] = 0.
\end{align*}
The expression above is equal to zero, because $u = 0$ for $t =0$, 
$g_n = 0$ for $t=T$, and $g_n$ has a zero boundary condition. 
\end{proof}

\begin{remark}\label{rem.density_C2_H1}
If we consider $t \mapsto T-t$ in Lemma \ref{lem.density_C2_H1}, we obtain that 
$\widehat{C}^2 (\overline{Q}^T)$ is dense in $H^{1, \frac12}_{;,0} (Q^T; - \partial_t - \Delta)$, 
where 
\[ 
\widehat{C}^2 (\overline{Q}_T) \isdef \Big\lbrace u \colon u \circ \kappabfm 
\in C^2_0 \big( [0,T) \times \overline{\Omega}_0 \big) \Big\rbrace
\]
and $Q^T$ is the time flipped $Q_T$. Since $Q_T$ was arbitrary, we have 
$\widehat{C}^2 (\overline{Q}_T)$ is dense in $H^{1, \frac12}_{;,0} (Q_T;  \partial_t + \Delta)$.
\end{remark}

Next, we will introduce a lemma concerning the trace maps, which will 
be later used in the proof of the jump relations. It is the analogue of 
\cite[Lemma 2.23]{Costabel1990}.

\begin{lemma}\label{lem.combined_trace_map_dense}
The combined trace map $(\gamma_0, \gamma_1^+) \colon u \mapsto 
(\gamma_0 u, \gamma_1^+ u)$ maps $\widehat{C}^2 (\overline{Q}_T)$ 
onto a dense subspace of $H^{\frac12, \frac14} (\Sigma_T) \times 
H^{-\frac12, -\frac14} (\Sigma_T)$. 
\end{lemma}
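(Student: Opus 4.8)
The plan is to transfer the cylindrical result \cite[Lemma 2.23]{Costabel1990} to the tube via the diffeomorphism $\kappabfm$, exactly as was done for the density lemmata above. First I would observe that the combined trace operator $(\gamma_0,\gamma_1^+)$ on $\widehat{C}^2(\overline{Q}_T)$ corresponds, under precomposition with $\kappabfm$, to a combined trace operator on the cylinder $Q_0$: by the discussion in Section \ref{subsec.trace_op_non_cylindrical_domains} one has $\gamma_{0,t}u = \gamma_0(u\circ\kappabfm)\circ\kappabfm^{-1}$, and by Lemma \ref{lem.mapping_properties_gamma_1_minus} together with the Remark defining $\gamma_1^+$, the velocity-corrected Neumann trace $\gamma_1^+ u$ is, for $u\in C^2(\overline{Q}_T)$, a first-order differential expression in $u$ whose pullback $(\gamma_1^+u)\circ\kappabfm$ can be written, using the chain rule and \eqref{eq.id_stefan2}, as $\gamma_1^{\operatorname{int}}(u\circ\kappabfm)$ plus lower-order (zeroth- and first-order) boundary terms with smooth, uniformly bounded coefficients coming from $\D\kappabfm$ and $\Vbfm = \partial_t\kappabfm\circ\kappabfm^{-1}$. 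The key structural point is that the map $(\gamma_0 w,\gamma_1^{\operatorname{int}}w)\mapsto(\gamma_0 w, \gamma_1^+( w\circ\kappabfm^{-1})\circ\kappabfm)$ is a bounded linear bijection of $H^{\frac12,\frac14}(\Sigma_0)\times H^{-\frac12,-\frac14}(\Sigma_0)$ with bounded inverse — it is block-triangular, the identity in the first slot, and an invertible perturbation of the identity (multiplication by a nowhere-vanishing smooth function plus a bounded operator mapping into the first component's image) in the second.

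With this in hand the argument is short. By Remark \ref{rem.density_C2_H1}, elements of $\widehat{C}^2(\overline{Q}_T)$ are precisely those $u$ with $u\circ\kappabfm\in C^2_0([0,T)\times\overline{\Omega}_0)$, which is the cylindrical test-function class $\widetilde{C}^\infty(\overline{Q}_0)$-analogue (with $t\mapsto T-t$) from \cite[Lemma 2.22, 2.23]{Costabel1990}; by \cite[Lemma 2.23]{Costabel1990} the cylindrical combined trace $(\gamma_0,\gamma_1^{\operatorname{int}})$ maps this class onto a dense subspace of $H^{\frac12,\frac14}(\Sigma_0)\times H^{-\frac12,-\frac14}(\Sigma_0)$. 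Composing with the bounded bijection with bounded inverse described above — which also respects the norm equivalence between $\Sigma_0$ and $\Sigma_T$ used throughout — sends dense subspaces to dense subspaces, and the image of $\widehat{C}^2(\overline{Q}_T)$ under $(\gamma_0,\gamma_1^+)$ is exactly the image of the cylindrical class under the composite. Hence $(\gamma_0,\gamma_1^+)\big(\widehat{C}^2(\overline{Q}_T)\big)$ is dense in $H^{\frac12,\frac14}(\Sigma_T)\times H^{-\frac12,-\frac14}(\Sigma_T)$.

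The main obstacle I anticipate is the bookkeeping in the second step: verifying carefully that the pullback of $\gamma_1^+$ differs from $\gamma_1^{\operatorname{int}}$ only by terms that are either zeroth order (harmless) or first-order tangential/normal terms which, crucially, land in a space no worse than $H^{-\frac12,-\frac14}(\Sigma_0)$ and, when read as a perturbation of the combined trace map, keep it invertible. One must check that the coefficient of $u\circ\kappabfm$ in the normal-derivative term is bounded away from zero (it is $\langle (\D\kappabfm)^{-\intercal}\nbfm_0, (\D\kappabfm)^{-\intercal}\nbfm_0\rangle^{1/2}$ up to normalization, hence positive by part (iii) of the Remark in Section \ref{subsec.anisotropic_Sobolev}), so that after composing with the first-slot data the whole map is a triangular perturbation of an isomorphism and therefore an isomorphism. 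Once this is established, density is preserved automatically and nothing further is needed; the remaining steps are routine and parallel \cite{Costabel1990} verbatim.
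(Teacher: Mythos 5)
Your overall strategy---pull everything back to the cylinder with $\kappabfm$ and quote the cylindrical density result of Costabel---is not the route the paper takes, and it has a genuine gap at exactly the point you flag as ``bookkeeping''. Writing $w=u\circ\kappabfm$, the pointwise pullback of the corrected trace is
\[
(\gamma_1^+u)\circ\kappabfm \;=\; a\,\gamma_1^{\operatorname{int}}w \;+\; B\,\gamma_0 w,
\qquad a=\big\|(\D\kappabfm)^{-\intercal}\nbfm_0\big\|,
\]
where $B$ is a first-order tangential operator plus a zeroth-order term whose coefficients contain $(\D\kappabfm)^{-\intercal}\nbfm_0$ and $\langle\Vbfm,\nbfm_t\rangle\circ\kappabfm$. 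Your argument needs the transfer map $\Phi\colon(g,h)\mapsto(g,\,a h+Bg)$ to be a bounded bijection with bounded inverse on $H^{\frac12,\frac14}(\Sigma_0)\times H^{-\frac12,-\frac14}(\Sigma_0)$; only then does ``image of a dense set under $\Phi$'' give density. But in the paper's setting $\Gamma_0$ is merely Lipschitz, so the unit normal $\nbfm_0$---and hence $a$ and the coefficients of $B$---is only $L^\infty$ and in general discontinuous. Multiplication by an $L^\infty$ function is not bounded on $H^{\pm\frac12,\pm\frac14}(\Sigma_0)$, and the boundedness of $B\colon H^{\frac12,\frac14}(\Sigma_0)\to H^{-\frac12,-\frac14}(\Sigma_0)$ with such rough coefficients is likewise unproven. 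Positivity of $a$ (which you do check) is not the issue; invertibility of $\Phi$ as an operator on the fractional trace spaces is, and it does not follow from anything established in the paper. So the key step of your proof is unjustified (and would at best require extra smoothness of $\Gamma_0$ or a substantially different treatment of the conormal derivative of the transformed, variable-coefficient operator).

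For comparison, the paper never conjugates the Neumann trace back to the cylinder. It proves the lemma by a Hahn--Banach/annihilator argument on the tube itself: assume $(\chi,\psi)$ vanishes on the range of $(\gamma_1^+,\gamma_0)$ over $\widehat{C}^2(\overline{Q}_T)$, use the solution operators $\Tcal$ of the forward Dirichlet problem and $\Scal$ of the time-reversed problem (Theorem \ref{thm.solution_operator_isomorphism}), apply Green's second formula of Lemma \ref{lem.green_formula_tube} to $u=\Tcal\chi$ and $v=\Scal f$, extend the annihilation identity by the density of Remark \ref{rem.density_C2_H1} to all $\varphi\in H^{1,\frac12}_{;,0}(Q_T;\partial_t+\Delta)$, conclude $\int_{Q_T}uf=0$ for all $f\in L^2(Q_T)$, hence $\chi=0$, and then $\psi=0$ from the surjectivity of $\gamma_0$. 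That argument only uses the weak definition of $\gamma_1^\pm$ and the Green formulas already established on the tube, which is precisely how it avoids the rough-coefficient multiplication problem your transfer map runs into.
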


\begin{proof}
We again mimic the respective proof from \cite{Costabel1990}, 
but will not use a time reversal map. Let us assume a linear functional 
$(\chi, \psi) \in H^{\frac12, \frac14} (\Sigma_T) \times H^{-\frac12, -\frac14}(\Sigma_T)$ 
that vanishes on the range of $(\gamma_1^+, \gamma_0)$. We need to 
show that $(\chi, \psi) = (0,0)$, since then the density follows by 
\cite[Korollar III.1.9]{Werner2018}.  To this end, assume
\begin{equation}\label{eq.proof_combined_trace_map_equal}
 \langle \chi, \gamma_1^+ \varphi \rangle = \langle \psi, \gamma_0 \varphi \rangle 
 	\quad \text{for all } \varphi \in \widehat{C}^2 (\overline{Q}_T).
\end{equation}
Let
\[ 
  \Tcal = (g \mapsto \Tcal g) \colon H^{\frac12, \frac14} (\Sigma_T) \to H^{1, \frac12}_{;0,} (Q_T)
\]
be the solution operator (see Theorem \ref{thm.solution_operator_isomorphism}) 
of the Dirichlet problem
\begin{equation}
\label{eq.Dirichlet_Prob_solution_op}
\begin{aligned}
(\partial_t - \Delta) (\Tcal g) & = 0 && \ \ \text{in } Q_T, \\
 \gamma_0 (\Tcal g) & = g && \ \ \text{on } \Sigma_T.
\end{aligned}
\end{equation}
Moreover, let 
\[ 
\Scal = (f \mapsto \Scal f) \colon L^2 (Q_T) \to H^{1, \frac12}_{0;,0} (Q_T) 
\]
be the solution operator (see Theorem \ref{thm.solution_operator_isomorphism} 
used for the substitution $t \mapsto T-t$) of the Dirichlet problem
\[
\begin{aligned}
(\partial_t + \Delta) (\Scal f) & = f && \ \ \text{in } Q_T, \\
\gamma_0 (\Scal f) & = 0 && \ \ \text{on } \Sigma_T.
\end{aligned}
\]
	
We can apply Green's second formula from Lemma \ref{lem.green_formula_tube} 
to $u \isdef \Tcal\chi$ and $v \isdef \Scal f$ for any $f \in L^2 (Q_T)$, since 
$u \in H^{1, \frac12}_{;0,} (Q_T; \partial_t - \Delta)$ and $(\partial_t + \Delta) v \in L^2 (Q_T)$. 
We obtain
\[ 
  \int_{Q_T} \big\{(\partial_t - \Delta) u v +  u (\partial_t + \Delta) v \big\}\, \drm (\xbfm, t) 
  = \langle \gamma_0 u, \gamma_1^+  v \rangle - \langle \gamma_1^- u, \gamma_0 v \rangle.
\]
Since $\gamma_0  v = 0$ and $\gamma_0 u = \chi$, as well as $(\partial_t - \Delta) u = 0$ 
and $(\partial_t + \Delta) v = f$, we obtain
\[  
  \int_{Q_T} u  f \, \drm (t,\xbfm) =  \langle  \chi, \gamma_1^+  v \rangle. 
\]
	
Due to continuity and Remark \ref{rem.density_C2_H1}, \eqref{eq.proof_combined_trace_map_equal} 
holds also for all $\varphi \in H^{1, \frac12}_{;,0} (Q_T; \partial_t + \Delta)$ and, thus, 
also for $\varphi = \Scal f$. This implies
\[ 
  \langle \chi, \gamma_1^+ \Scal f \rangle = \langle \psi, \gamma_0 \Scal f \rangle.
\]
Since $\gamma_0 \Scal f = 0$, we thus obtain $\int_{Q_T} u  f \, \drm(t,\xbfm) = 0$ 
for all $f \in L^2 (Q_T)$. Therefore, $0 = u = \Tcal ( \chi)$ and thus $\chi = \gamma_0 u = 0$. 
Looking again at \eqref{eq.proof_combined_trace_map_equal} gives
\[ 
\langle \psi, \gamma_0 \varphi \rangle = 0 \quad \text{for all } \varphi \in H^{1, \frac12}_{;,0} (Q_T).
\]
The trace map $\gamma_0$ is not only surjective for $\varphi \in H_{;0,}^{1, \frac12}(Q_T)$ 
as shown in Section \ref{subsec.trace_op_non_cylindrical_domains}, but also 
for $\varphi \in H_{;,0}^{1, \frac12} (Q_T)$ if one considers the backward 
problem. We may hence conclude that $\psi = 0$.
\end{proof}

In the following, we state the analogue of \cite[Proposition 2.24]{Costabel1990}. 
\begin{lemma}\label{lem.green_formula_for_and_backward}
Green's first formula given in \eqref{eq.first_green_formula} holds 
for all $u \in H^{1, \frac12}_{;0,} (Q_T; \partial_t - \Delta)$ and $v \in 
H^{1, \frac12}_{;0,} (Q_T)$. If also $v \in H^{1, \frac12}_{;0,} 
(Q_T; \partial_t - \Delta)$, we can write the Green's formula as
\begin{equation}\label{eq.rewritten_green_formula}
\begin{aligned}
  &\int_{Q_T} \langle \nabla u, \nabla v \rangle \, \drm (t,\xbfm) - d(v,u) + \int_{\Omega_T} u(T, \xbfm) v(T, \xbfm) \, \drm \xbfm \\
  &\hspace*{4cm} = \langle \gamma_1^- u, \gamma_0 v \rangle + \int_{Q_T} (\partial_t - \Delta) u v \, \drm (t,\xbfm). 
\end{aligned}
\end{equation}
\end{lemma}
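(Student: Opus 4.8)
The plan is to follow the proof of \cite[Proposition~2.24]{Costabel1990}, with \cite[Lemma~2.22]{Costabel1990} replaced by our Lemma~\ref{lem.density_C2_H1} and with the velocity correction $\frac12\langle\Vbfm,\nbfm\rangle$ — present both in $\gamma_1^-$ and in $d(\cdot,\cdot)$ — carried along. By Lemma~\ref{lem.density_C2_H1}, $\widetilde{C}^2(\overline{Q}_T)$ is dense in $H^{1,\frac12}_{;0,}(Q_T;\partial_t-\Delta)$, and, as noted in its proof, also in $H^{1,\frac12}_{;0,}(Q_T)$.

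\emph{First assertion.} I would verify \eqref{eq.first_green_formula} first for $u,v\in\widetilde{C}^2(\overline{Q}_T)$, which vanish near $t=0$, by a direct computation on $(0,T)$: Green's classical first identity in $\Omega_t$ for each $t$ produces the surface term $-\int_{\Gamma_t}(\partial u/\partial\nbfm)v$; since for such functions $d(u,v)$ equals $\int_{Q_T}\partial_t u\,v\,\drm\xbfm\,\drm t+\frac12\int_{\Sigma_T}\langle\Vbfm,\nbfm\rangle uv\,\drm\sigma\,\drm t$, the term $\int_{Q_T}\partial_t u\,v$ is absorbed into $d(u,v)$, and the two surface integrals combine, via $\gamma_1^- u=\partial u/\partial\nbfm+\frac12\langle\Vbfm,\nbfm\rangle u$ (Lemma~\ref{lem.mapping_properties_gamma_1_minus}), into $-\langle\gamma_1^- u,\gamma_0 v\rangle$; no contribution at $t=0$ arises since $u$ and $v$ vanish there, and none at $t=T$ is needed because the computation stays on $(0,T)$. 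Both sides of \eqref{eq.first_green_formula} are continuous bilinear forms on $H^{1,\frac12}_{;0,}(Q_T;\partial_t-\Delta)\times H^{1,\frac12}_{;0,}(Q_T)$ — by continuity of $b^-$ (Lemma~\ref{lem.used_for_neumann_trace}) and of $\gamma_0$ and $\gamma_1^-$ — so the identity passes to all such $u,v$ by density. (The ``restriction from $\Rbbb$'' argument used just before Lemma~\ref{lem.green_formula_tube} is not available here, because $v$ need not have a vanishing end trace and hence cannot be extended by zero past $t=T$; this is precisely why a separate density argument is needed and why $u$ is still required to lie in the $\partial_t-\Delta$ space.)

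\emph{Second assertion.} Now let also $v\in H^{1,\frac12}_{;0,}(Q_T;\partial_t-\Delta)$. Subtracting \eqref{eq.first_green_formula} from the asserted identity \eqref{eq.rewritten_green_formula}, everything reduces to
\[
 d(u,v)+d(v,u)=\int_{\Omega_T} u(T,\xbfm)\,v(T,\xbfm)\,\drm\xbfm,
\]
which is the right replacement for the antisymmetry \eqref{eq.duv_eq_m_dvu} once the end traces need no longer vanish. For $u,v\in\widetilde{C}^2(\overline{Q}_T)$ this is immediate from Reynolds' transport theorem, since $d(u,v)+d(v,u)=\int_0^T\frac{\drm}{\drm t}\int_{\Omega_t}uv\,\drm\xbfm\,\drm t$ equals $\int_{\Omega_T}u(T,\xbfm)v(T,\xbfm)\,\drm\xbfm$ by the fundamental theorem of calculus and the vanishing of $u,v$ at $t=0$. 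Passing to the limit then requires the continuity of $d$, already at hand, together with the continuity of the end-trace map $u\mapsto u(T,\cdot)$ from $H^{1,\frac12}_{;0,}(Q_T;\partial_t-\Delta)$ into $L^2(\Omega_T)$.

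\emph{The main obstacle} is this last point. On $H^{1,\frac12}_{;0,}(Q_T)$ the end trace is not even defined, because $H^{1/2}$ in the time variable is exactly the borderline exponent that fails to embed into $C([0,T])$ — which is why the extra regularity $(\partial_t-\Delta)u\in L^2(Q_T)$ is indispensable in \eqref{eq.rewritten_green_formula}. I would recover it by upgrading $u\circ\kappabfm$ to $\Vcal(Q_0)$: using \eqref{eq.id_stefan2} and the uniform $C^2$-bound \eqref{eq.uniformity_cond}, one writes
\[
 \partial_t(u\circ\kappabfm)=\big((\partial_t-\Delta)u\big)\circ\kappabfm+(\Delta u)\circ\kappabfm
  +\big\langle(\D\kappabfm)^{-\intercal}\nabla(u\circ\kappabfm),\partial_t\kappabfm\big\rangle,
\]
where $(\Delta u)\circ\kappabfm$ is a second-order differential operator with $C^1$-coefficients applied to $u\circ\kappabfm$; since $u\circ\kappabfm\in L^2\big((0,T);H^1(\Omega_0)\big)$ and $\big((\partial_t-\Delta)u\big)\circ\kappabfm\in L^2(Q_0)$, each summand lies in $L^2\big((0,T);H^{-1}(\Omega_0)\big)$, so $u\circ\kappabfm\in\Vcal(Q_0)$. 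The embedding \eqref{eq.interpol_result} with $X=H^1(\Omega_0)$, $Y=H^{-1}(\Omega_0)$ and $[X,Y]_{\frac12}=L^2(\Omega_0)$ then gives $u\circ\kappabfm\in C\big([0,T];L^2(\Omega_0)\big)$ with norm controlled by $\|u\|_{H^{1,\frac12}(Q_T;\Lcal)}$, and composing back with $\kappabfm^{-1}$ transfers this bound to $u(T,\cdot)\in L^2(\Omega_T)$. Granting this, the displayed identity — and hence \eqref{eq.rewritten_green_formula} — follows by density.
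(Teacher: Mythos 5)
Your overall route coincides with the paper's: establish the identity for $u,v\in\widetilde{C}^2(\overline{Q}_T)$, pass to the limit with the density Lemma \ref{lem.density_C2_H1}, and for the second identity reduce everything to $d(u,v)+d(v,u)=\int_{\Omega_T}u(T,\xbfm)v(T,\xbfm)\,\drm\xbfm$ together with continuity of the end trace through $H^{1,\frac12}_{;0,}(Q_T;\partial_t-\Delta)\subset\Vcal(Q_T)$ and the embedding \eqref{eq.interpol_result}; your explicit verification of this inclusion via \eqref{eq.id_stefan2} is a welcome detail that the paper leaves implicit. However, the limit passage in your first assertion has a genuine gap. You justify it by claiming that both sides of \eqref{eq.first_green_formula} are \emph{jointly} continuous bilinear forms on $H^{1,\frac12}_{;0,}(Q_T;\partial_t-\Delta)\times H^{1,\frac12}_{;0,}(Q_T)$, citing Lemma \ref{lem.used_for_neumann_trace}. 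That lemma gives continuity of $b^-$ (hence of $d$) only on the spaces over the whole time line; it says nothing about the finite-time pairing $\int_0^T\int_{\Omega_t}\partial_t u\,v\,\drm\xbfm\,\drm t$ when $v$ carries no zero end condition, since $v$ cannot be extended by zero past $t=T$ and any other extension changes the value of $d$. This is precisely the delicate point — recall the remark after \eqref{eq.first_green_formula} that the formula cannot be obtained by restriction when both $u,v\in H^{1,\frac12}_{;0,}(Q_T)$ — and you acknowledge it in your parenthetical, yet your continuity claim then presupposes exactly what the lemma is meant to establish.

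The paper closes this gap with an iterated argument which you should adopt: first, for fixed smooth $u\in\widetilde{C}^2(\overline{Q}_T)$ (so that $\partial_t u\in L^2(Q_T)$), every term in the smooth-function identity \eqref{eq.green_formula_in_proof} is continuous in $v$ with respect to the $H^{1,\frac12}_{;0,}(Q_T)$-norm, so the identity extends to all such $v$; then, for fixed $v\in H^{1,\frac12}_{;0,}(Q_T)$, all terms except the one containing $\partial_t u$ are continuous in $u$ in the graph norm of $H^{1,\frac12}_{;0,}(Q_T;\partial_t-\Delta)$, whence the $\partial_t u$-term is continuous in $u$ as well (being a difference of continuous terms), and Lemma \ref{lem.density_C2_H1} lets you pass to all admissible $u$. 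With this correction the remainder of your argument, including the Reynolds computation and the treatment of the end-point term in the second assertion, goes through and agrees with the paper's proof.
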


\begin{proof}
We again mimic the proof of \cite[Proposition 2.24]{Costabel1990}. Given 
$u \in \widetilde{C}^2 (\overline{Q}_T)$ and $v \in \widetilde{C}^1(\overline{Q}_T)$%
\footnote{The space $\widetilde{C}^1(\overline{Q}_T)$ ist defined in 
complete analogy to $\widetilde{C}^2 (\overline{Q}_T)$ via
\[
\widetilde{C}^1 (Q_T) \isdef \big\lbrace u \colon u \circ \kappabfm 
\in C_0^1 \big( (0,T] \times \Omega_0 \big) \big\rbrace
\]}, 
we find
\begin{equation}\label{eq.green_formula_in_proof}
\begin{aligned}
  &\int_{Q_T} \langle \nabla u, \nabla v \rangle \, \drm (t,\xbfm)
   + \int_{Q_T} \partial_t u v \, \drm (t,\xbfm) 
   + \frac12 \int_0^T \int_{\Gamma_t} \langle \Vbfm, \nbfm \rangle u v \drm \sigma \drm t \\
  &\hspace*{5.5cm} = \langle \gamma_1^- u, \gamma_0 v \rangle + \int_{Q_T} (\partial_t - \Delta) u v \, \drm (t,\xbfm).
\end{aligned}
\end{equation}
All terms are continuous with respect to $v$ in the $H^{1,\frac12}_{;0,} (Q_T)$-norm. 
Thus, by continuity, we can extend \eqref{eq.green_formula_in_proof} to all
$v \in H^{1, \frac12}_{;0,} (Q_T)$. Let $v \in H^{1, \frac12}_{;0,} (Q_T)$ 
be fixed. Then, all terms in \eqref{eq.green_formula_in_proof} except 
the term containing the $\partial_t u$ are obviously continuous with 
respect to $u$ in the norm of $H^{1, \frac12}_{;0,} (Q; \partial_t- \Delta)$. 
Therefore, also the term containing $\partial_t u$ is continuous. 
Lemma \ref{lem.density_C2_H1} allows to extend 
\eqref{eq.green_formula_in_proof} to all $u \in H^{1, \frac12}_{;0,} 
(Q_T; \partial_t - \Delta)$. Thus, Green's first formula holds as given in the claim.
	
For $u, v \in \widetilde{C}^2 (\overline{Q}_T)$, \eqref{eq.rewritten_green_formula} 
holds. As in \cite{Costabel1990}, the term $\int_{\Omega_T} u(T, \xbfm) v(T, \xbfm) \, 
\drm(t,\xbfm)$ is continuous for $u$ and $v$ in the norm of $H^{1,\frac12}_{;0,} 
(Q_T; \partial_t - \Delta)  \subset \Vcal (Q_T)$ and $\Vcal (Q_T)$ consists of
functions $\varphi$, which always satisfy $\varphi \circ \kappabfm \in 
C \big( [0,T]; L^2 (\Omega_0) \big)$. From here, the second claim follows.
\end{proof}

\section{The Calder{\'o}n operator}
\label{sec.calderon_op}
Let us first introduce the fundamental 
solution for the heat equation, which, 
in accordance with e.g.\ \cite{Tausch2019}, reads
\[ 
G (t, \tau, \xbfm, \ybfm) \isdef \begin{cases} 
\frac{1}{(4\pi (t-\tau))^{\frac{d}{2}}} \exp \left(- \frac{ \| \xbfm - \ybfm \|^2}{4 (t-\tau )}\right),\ \ 
& \text{ if } \tau < t,  \\ 0, \ \ & \text{ if } \tau \geq t,\end{cases} \]
Notice that this is equivalent to consider $\overline{G} (t-\tau, \xbfm, \ybfm)$, 
where $\overline{G}$ is given by
\[ 
\overline{G} (\tau, \xbfm, \ybfm)\isdef \frac{1}{(4 \pi \tau)^{\frac{d}{2}}} 
\exp\left( -\frac{ \| \xbfm - \ybfm \|^2}{4 \tau} \right) \frac12 (1 + \operatorname{sign} \tau),
\]
as introduced in \cite[Formula (2.39)]{Costabel1990}.
Moreover, let us denote 
\begin{equation}\label{eq.tilde_G}
   \widetilde{G}(t, \xbfm) = \begin{cases} \frac{1}{(4\pi t)^{\frac{d}{2}}} 
   \exp\left(-\frac{\|\xbfm\|^2}{4 t}\right),\ \ & \text{ if } t > 0,  \\ 0, \ \ & \text{ if } t \leq 0. \end{cases} 
\end{equation}

For $u \in C^2\big(\bigcup_{0 < t< \infty}(\lbrace t \rbrace \times \overline{\Omega}_t)\big)$ 
with $u(0, \xbfm) = 0$ on $\Omega_0$, we have for $(t_0,\xbfm_0) 
\in \bigcup_{0 < t< \infty}( \lbrace t \rbrace \times \Omega_t)$ that
\begin{align*}
u (t_0, \xbfm_0) &= \int_{Q_T} \int_{\Omega_t} (\partial_t - \Delta) u (t, \xbfm) 
\widetilde{G}(t_0 - t, \xbfm_0 - \xbfm) \, \drm(t,\xbfm)\\
  &\quad +  \int_{\Sigma_T} \widetilde{G}( t_0 - t, \xbfm_0 - \xbfm) 
  	\frac{\partial u}{\partial \nbfm} (t, \xbfm) \,\drm\sigma_{(t,\xbfm)} \\
  & \quad - \int_{\Sigma_T} \frac{\partial \widetilde{G}}{\partial \nbfm} 
  	(t_0 - t, \xbfm_0 - \xbfm) u (t, \xbfm) \, \drm\sigma_{(t,\xbfm)} \\
  & \quad + \int_{\Sigma_T} \widetilde{G}( t_0 - t, \xbfm_0 - \xbfm) 
    \langle \Vbfm, \nbfm \rangle  (t, \xbfm) u (\ybfm, \tau) \drm\sigma_{(t,\xbfm)},
\end{align*}
as it can be seen from Lemma \ref{lem.green_formula_tube} and 
the property of the fundamental solution. Moreover, we will only 
look at the case, for which $(\partial_t - \Delta) u = 0$ holds.

We introduce the single and double layer potentials as
\begin{align*}
\widetilde{\Vcal} \varphi (t_0, \xbfm_0) &\isdef \langle \varphi, \gamma_0 G \rangle 
=  \int_{\Sigma_T} G (t_0, t, \xbfm_0, \ybfm) \varphi (t, \ybfm) \, \drm\sigma_{(t,\ybfm)}, \\
\widetilde{\Kcal} w (t_0, \xbfm_0) &\isdef \langle \gamma_1^+ G, w \rangle 
= \int_{\Sigma_T} \gamma_{1, (t, \ybfm)}^+ G (t_0, t, \xbfm_0, \ybfm) w(t, \ybfm) \, \drm\sigma_{(t,\ybfm)}.
\end{align*}
Then, similarly to \cite[Theorem 2.20]{Costabel1990}, we obtain 
the representation formula from \cite[Equation (6)]{Tausch2019}
given in the following lemma.

\begin{lemma}
Let $u \in H^{1, \frac12} (Q_T)$ with $(\partial_t - \Delta) u = 0$ in $Q_T$. 
Then, we have the representation formula
\begin{equation}\label{eq.representation_formula}
  u (t, \widetilde{\xbfm}) = \widetilde{\Vcal} \gamma_1^- u (t, \widetilde{\xbfm}) 
  	- \widetilde{\Kcal} u (t, \widetilde{\xbfm}) \quad \text{for all } (t, \widetilde{\xbfm}) \in Q_T. 
\end{equation}
\end{lemma}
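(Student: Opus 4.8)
The plan is to derive the representation formula \eqref{eq.representation_formula} directly from Green's second formula (Lemma~\ref{lem.green_formula_tube}(iii)) by inserting the fundamental solution as one of the two arguments, exactly in the spirit of \cite[Theorem~2.20]{Costabel1990}. Fix $(t,\widetilde{\xbfm})\in Q_T$ and choose a value $t_0>t$ so that the point lies in the relevant portion of the tube. In Green's second formula take $u$ to be the given solution (so $(\partial_t-\Delta)u=0$ and, after extension by zero for negative times, $u\in H^{1,\frac12}_{;0,}$ with the required regularity since $\Lcal u=0\in L^2$), and take $v(\tau,\ybfm)\isdef \widetilde G(t_0-\tau,\widetilde{\xbfm}-\ybfm)=G(t_0,\tau,\widetilde{\xbfm},\ybfm)$, which satisfies the backward heat equation $(\partial_\tau+\Delta)v=0$ away from the singularity and sits in $H^{1,\frac12}_{;,0}$ on $\bigcup_{-\infty<\tau<t_0}(\{\tau\}\times\Omega_\tau)$ after the usual truncation. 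Since both bulk terms $(\partial_\tau-\Delta)u\cdot v$ and $u\cdot(\partial_\tau+\Delta)v$ vanish away from the diagonal, Green's second formula collapses to $0=\langle\gamma_0 u,\gamma_1^+ v\rangle-\langle\gamma_1^- u,\gamma_0 v\rangle$ on the complement of a small space-time ball, and the contribution of that ball in the limit produces the point value $u(t,\widetilde{\xbfm})$.

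The key steps, in order, are: (i) regularize by replacing the singular kernel with its value on the truncated domain $\bigcup_{\tau<t_0-\eps}$, or equivalently excise a shrinking neighborhood of $(t,\widetilde{\xbfm})$, so that Lemma~\ref{lem.green_formula_tube}(iii) applies with genuinely smooth (or at least $H^{1,\frac12}(\cdot;\Lcal)$-regular) arguments on the remaining region; (ii) identify the two boundary pairings on $\Sigma_T$ with the layer potentials: $\langle\gamma_1^- u,\gamma_0 v\rangle=\langle\gamma_1^- u,\gamma_0 G\rangle=\widetilde{\Vcal}\gamma_1^- u(t,\widetilde{\xbfm})$ by the definition of $\widetilde{\Vcal}$, and $\langle\gamma_0 u,\gamma_1^+ v\rangle=\langle\gamma_1^+ G,u\rangle=\widetilde{\Kcal}u(t,\widetilde{\xbfm})$ by the definition of $\widetilde{\Kcal}$ (here one uses that $\gamma_1^+$ acting in the $(\tau,\ybfm)$ variable on $G$ is precisely the velocity-corrected conormal derivative built into $\widetilde{\Kcal}$, which is exactly why the $+$ trace and not the $-$ trace appears on this term); (iii) evaluate the limit of the excised-ball contribution and check it equals $u(t,\widetilde{\xbfm})$; this is the standard reproducing property of the heat kernel, and the velocity-correction terms $\tfrac12\langle\Vbfm,\nbfm\rangle$ hidden in $d(u,v)$ and in $\gamma_1^\pm$ cancel in pairs because $d(u,v)=-d(v,u)$ (equation~\eqref{eq.duv_eq_m_dvu}), leaving exactly the clean formula; (iv) extend from the dense class $\widetilde{C}^2(\overline{Q}_T)$ to all $u\in H^{1,\frac12}(Q_T)$ with $\Lcal u=0$ by the continuity/density results of Lemma~\ref{lem.density_C2_H1} and the mapping properties of $\gamma_0,\gamma_1^\pm$ and of the potentials.

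The main obstacle I expect is step (iii): controlling the limit of the integral over the excised neighborhood of $(t,\widetilde{\xbfm})$ when the domain is moving. In the cylindrical case of \cite{Costabel1990} the local geometry near the excision point is a fixed half-cylinder, but here the boundary $\Gamma_\tau$ sweeps with velocity $\Vbfm$, so the shrinking ball meets a genuinely non-cylindrical piece of $\Sigma_T$ and the extra boundary term $\tfrac12\langle\Vbfm,\nbfm\rangle uv$ in $d(u,v)$ is active there. The resolution is to note that $\Vbfm$ is bounded by the uniformity condition \eqref{eq.uniformity_cond}, so on a ball of radius $\eps$ this extra surface term is $O(\eps^{d})$ while the surface measure is $O(\eps^{d-1})$ and the kernel is integrable against it; hence the velocity correction contributes nothing in the limit, and the usual mollifier computation with $\widetilde G$ recovers $u(t,\widetilde{\xbfm})$ exactly as in the flat case. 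Equivalently, and more cleanly, one can simply transport the whole identity back to the reference cylinder $Q_0$ via $\kappabfm$ and invoke \cite[Theorem~2.20]{Costabel1990} after checking that the pulled-back kernel still reproduces point values up to lower-order terms that vanish; I would present the direct argument but remark that the pullback route is available.
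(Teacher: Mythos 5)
Your proposal is correct and takes essentially the same route as the paper, which likewise obtains the formula by inserting the fundamental solution into Green's second formula on the tube (Lemma \ref{lem.green_formula_tube}), using the reproducing property of the heat kernel in a limiting/truncation argument, identifying the two boundary pairings with $\widetilde{\Vcal}\gamma_1^- u$ and $\widetilde{\Kcal}u$ (the latter because $\widetilde{\Kcal}$ is built from $\gamma_{1}^{+}$ applied to $G$), and extending by density; the paper only sketches this, deferring the details to \cite{Costabel1990} and \cite{Tausch2019}. One cosmetic remark: no cancellation of the $\tfrac12\langle\Vbfm,\nbfm\rangle$ terms is needed or even true --- they are simply absorbed into the definitions of $\gamma_1^- u$ and $\gamma_{1}^{+}G$ (equivalently, they combine into the single transport term $\int_{\Sigma_T}\langle\Vbfm,\nbfm\rangle G\,u\,\drm\sigma$ appearing in the paper's preliminary identity), so the exact Green identity of Lemma \ref{lem.green_formula_tube}(iii) already yields the clean formula without any cancellation argument.
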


As in \cite[pg.~514]{Costabel1990}, we can rewrite the 
definition of the single layer potential by 
\begin{equation}\label{eq.V_tilde_with_G_tilde}
\begin{aligned}
  \widetilde{\Vcal} \varphi (t_0, \xbfm_0) & = \langle \varphi, \gamma_0 G(t, t_0, \xbfm, \xbfm_0) \rangle \\
  &= \langle \gamma_0 '  \varphi, G(t, t_0, \xbfm, \xbfm_0) \rangle \\
  & = \widetilde{G} \star (\gamma_0 ' \varphi) (t_0, \xbfm_0),
\end{aligned}
\end{equation}
where $\widetilde{G}$ is given in \eqref{eq.tilde_G} and
\[ 
\langle \gamma_0 ' \varphi, \chi \rangle 
= \langle \varphi, \gamma_0 \chi \rangle 
= \int_{\Sigma_T} \varphi \chi \, \drm \sigma_{(t, \xbfm)}
\]
for all $\chi \in C_0^\infty (\Rbbb^{d+1})$. We will 
use this also for $\chi \in C^2_0 (\Rbbb^{d+1})$. 

We would like to find the mapping properties of the single and double 
layer potentials, which are the equivalent of the results given in 
\cite[Proposition 3.1, Remark 3.2, and Proposition 3.3]{Costabel1990}.

\begin{lemma}\label{lem.mapping_properties_single_layer_pot}
The mapping
\[ 
  \widetilde{\Vcal} \colon H^{-\frac12,-\frac14} (\Sigma_T) \to H^{1, \frac12}_{;0,} (Q_T; \partial_t - \Delta) 
\]
is continuous.
\end{lemma}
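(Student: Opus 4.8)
The plan is to reduce the statement to the corresponding result on the cylinder $Q_0$, namely \cite[Proposition~3.1]{Costabel1990}, by transporting everything through the diffeomorphism $\kappabfm$. First I would use the rewriting \eqref{eq.V_tilde_with_G_tilde}, $\widetilde{\Vcal}\varphi = \widetilde G\star(\gamma_0'\varphi)$, which already shows $(\partial_t-\Delta)\widetilde{\Vcal}\varphi = 0$ away from $\Sigma_T$ (since $\widetilde G$ is the fundamental solution and $\gamma_0'\varphi$ is supported on the lateral boundary), so the distributional identity $(\partial_t-\Delta)\widetilde{\Vcal}\varphi \in L^2(Q_T)$ needed for membership in $H^{1,\frac12}(Q_T;\partial_t-\Delta)$ is immediate once we know $\widetilde{\Vcal}\varphi\in H^{1,\frac12}(Q_T)$; in fact $(\partial_t-\Delta)\widetilde{\Vcal}\varphi=0$ in $Q_T$, which even gives the zero $L^2$-norm for that part. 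The zero initial condition comes from the fact that $\widetilde G(t,\cdot)=0$ for $t\le 0$, so that $\widetilde{\Vcal}\varphi$ vanishes for $t\le 0$ and hence lies in the subspace $H^{1,\frac12}_{;0,}$. Thus the real content is the continuity $\widetilde{\Vcal}\colon H^{-\frac12,-\frac14}(\Sigma_T)\to H^{1,\frac12}(Q_T)$.

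For that, given $\varphi\in H^{-\frac12,-\frac14}(\Sigma_T)$, by the definition of the non-cylindrical norms we have $\varphi\circ\kappabfm\in H^{-\frac12,-\frac14}(\Sigma_0)$ with comparable norm. The key step is to relate $(\widetilde{\Vcal}\varphi)\circ\kappabfm$ to the cylindrical single layer potential applied to (a density equivalent to) $\varphi\circ\kappabfm$. Concretely, the integral $\int_{\Sigma_T} G(t_0,t,\xbfm_0,\ybfm)\varphi(t,\ybfm)\,\drm\sigma_{(t,\ybfm)}$ can be pulled back: substituting $\ybfm=\kappabfm(t,\ebfm)$ turns the surface measure $\drm\sigma_{(t,\ybfm)}$ into $\omega(t,\ebfm)\,\drm\sigma_{(t,\ebfm)}$ for a smooth, uniformly bounded and bounded-below Jacobian factor $\omega$ (coming from the Gram determinant of $\D_{\ebfm}\kappabfm$ restricted to $\Gamma_0$, positive by Remark (iii)), so that the pulled-back potential equals the cylindrical single layer potential of the kernel $G\big(t_0,t,\kappabfm(t_0,\xbfm_0),\kappabfm(t,\ebfm)\big)$ against $\omega(t,\ebfm)\varphi(t,\kappabfm(t,\ebfm))$. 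The Costabel kernel estimates used in the proof of \cite[Proposition~3.1]{Costabel1990} are stable under such a $C^2$ change of variables: the Gaussian kernel $\widetilde G(t_0-t,\kappabfm(t_0,\xbfm_0)-\kappabfm(t,\ebfm))$ differs from $\widetilde G(t_0-t,\xbfm_0-\ebfm)$ only by the bi-Lipschitz distortion $\underline\sigma\|\xbfm_0-\ebfm\|\le\|\kappabfm(t_0,\xbfm_0)-\kappabfm(t,\ebfm)\|$ (up to the time-dependence of $\kappabfm$, controlled by the $C^2$ bound \eqref{eq.uniformity_cond}), so the Laplace–Fourier multiplier bounds that drive Costabel's anisotropic estimate survive with constants depending only on $C_{\kappabfm}$ and $\underline\sigma,\overline\sigma$. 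Hence $(\widetilde{\Vcal}\varphi)\circ\kappabfm\in H^{1,\frac12}(Q_0)$ with norm $\lesssim\|\varphi\circ\kappabfm\|_{H^{-\frac12,-\frac14}(\Sigma_0)}\lesssim\|\varphi\|_{H^{-\frac12,-\frac14}(\Sigma_T)}$, which by the definition of $\|\cdot\|_{H^{1,\frac12}(Q_T)}$ is exactly the claim.

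The main obstacle is making the pull-back of the layer potential rigorous at the level of the negative-order boundary space, rather than just for smooth densities: one must verify that the map $\varphi\mapsto(\widetilde{\Vcal}\varphi)\circ\kappabfm$ really does coincide with a (perturbed) cylindrical single layer operator in the sense of continuous extension from $C^2_0$ densities, and that the perturbation of the kernel — the time-dependence $\kappabfm(t_0,\cdot)$ vs.\ $\kappabfm(t,\cdot)$ in the two slots — does not destroy the convolution-in-time structure that Costabel's Fourier-in-$t$ argument relies on. I would handle this by freezing the spatial diffeomorphism and writing $\kappabfm(t,\ebfm)-\kappabfm(t_0,\ebfm) = (t-t_0)\!\int_0^1\partial_t\kappabfm(t_0+s(t-t_0),\ebfm)\,\drm s$, so that the discrepancy is $O(|t-t_0|)$ uniformly; this is exactly the size that is absorbed by the Gaussian decay $\widetilde G(t_0-t,\cdot)$, so the perturbed kernel still satisfies the same anisotropic estimates as the convolution kernel, and the continuity constant only picks up an extra factor depending on $\|\partial_t\kappabfm\|_{\infty}\le C_{\kappabfm}$. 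With that estimate in hand, the proof closes by density of $C^2_0$ densities in $H^{-\frac12,-\frac14}(\Sigma_T)$ and the norm equivalences of Section~\ref{subsec.anisotropic_Sobolev}.
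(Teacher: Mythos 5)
Your opening paragraph is on the right track and in fact coincides with the start of the paper's argument: via \eqref{eq.V_tilde_with_G_tilde} one has $\widetilde{\Vcal}\varphi=\widetilde G\star(\gamma_0'\varphi)$, the support property of $\widetilde G$ gives the zero initial condition, and $(\partial_t-\Delta)\widetilde{\Vcal}\varphi=0$ in $Q_T$ disposes of the $\Lcal$-part of the norm. The paper then stays on free space: since the fundamental solution is the same for cylindrical and non-cylindrical domains, the only domain-dependent ingredient is the adjoint trace $\gamma_0'$, which maps $H^{-\frac12,-\frac14}(\Sigma_T)$ continuously into compactly supported distributions in $H^{-1,-\frac12}(\Rbbb^{d+1})$ by duality with the trace lemma already established on $\Sigma_T$; the convolution with $\widetilde G$ is then Costabel's Fourier-multiplier argument on $\Rbbb^{d+1}$ (symbol $(i\tau+\|\xibfm\|^2)^{-1}$), verbatim, followed by restriction to $Q_T$. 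No pullback of the potential itself is ever needed.

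Your second and third paragraphs take a genuinely different route — pulling the layer potential back to the cylinder — and this is where there is a real gap. After the substitution $\ybfm=\kappabfm(t,\ebfm)$, $\xbfm_0\mapsto\kappabfm(t_0,\xbfm_0)$, the resulting operator on $Q_0$ is \emph{not} a space-time convolution, so the Laplace--Fourier multiplier machinery that drives \cite[Proposition~3.1]{Costabel1990} simply does not apply to it; the statement that ``the multiplier bounds survive with constants depending only on $C_{\kappabfm}$'' is an assertion, not an argument. Pointwise comparability of the perturbed Gaussian kernel with $\widetilde G(t_0-t,\xbfm_0-\ebfm)$ (bi-Lipschitz distortion, plus your Taylor remark that the time mismatch is $O(|t-t_0|)$) controls absolute size only; it cannot by itself produce boundedness of a potential operator from the \emph{negative-order} space $H^{-\frac12,-\frac14}(\Sigma_0)$ into the \emph{positive-order} space $H^{1,\frac12}(Q_0)$, which hinges on smoothing/cancellation structure rather than kernel decay. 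Making your route rigorous would amount to developing a variable-coefficient (non-convolution) anisotropic layer-potential calculus on the cylinder — essentially the ``second approach'' the introduction discusses and deliberately avoids. So either supply that analysis, or switch to the free-space argument, where the convolution structure is intact and the moving boundary enters only through $\gamma_0'$ and the final restriction.
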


\begin{proof}
The proof follows as in \cite[pg.~514--515]{Costabel1990} in the 
case of a cylindrical domain. In there, the claim is proven by considering 
the problem on $\Rbbb^{d+1}$ using Fourier techniques and then restricting 
it appropriately, which can also be done in the case of a non-cylindrical domain.
\end{proof}

\begin{lemma}\label{lem.mapping_properties_double_layer_pot}
The mapping
\[ 
\widetilde{\Kcal} \colon H^{\frac12, \frac14} (\Sigma_T)
\to H^{1, \frac12}_{;0,} (Q_T; \partial_t - \Delta)
\]
is continuous.
\end{lemma}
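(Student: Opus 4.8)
The plan is to mimic the proof of Lemma~\ref{lem.mapping_properties_single_layer_pot}, reducing the mapping property on the tube $Q_T$ to the corresponding statement on a cylinder, which in turn follows from \cite[Proposition~3.3]{Costabel1990} via Fourier techniques on $\Rbbb^{d+1}$. First I would reduce the problem to $\Rbbb^{d+1}$: extend the density $w\in H^{\frac12,\frac14}(\Sigma_T)$ to a function on the full lateral boundary $\bigcup_{t\in\Rbbb}(\{t\}\times\Gamma_t)$ with compact support in time, then observe that $\widetilde\Kcal w(t_0,\xbfm_0)$ is given by convolution of $\widetilde G$ (from \eqref{eq.tilde_G}) against a distribution supported on the surface. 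The key structural point is that, unlike the single layer case where $\gamma_0'$ simply spreads the density onto the surface, here $\gamma_1^+G$ involves the conormal derivative of $\widetilde G$ plus the velocity correction $-\frac12\langle\Vbfm,\nbfm\rangle$; I would write $\widetilde\Kcal w = \widetilde G\star\mu$ where $\mu = \gamma_1'w$ is the distribution $\langle\gamma_1'w,\chi\rangle = \int_{\Sigma_T}\{w\,\partial\chi/\partial\nbfm - \frac12\langle\Vbfm,\nbfm\rangle w\,\chi\}\,\drm\sigma$, so that $\mu$ is (a pushforward of) a normal derivative of a surface layer plus a lower-order surface layer.

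Next I would establish that $\mu\in H^{-1,-\frac12}(\Rbbb^{d+1})$ when $w\in H^{\frac12,\frac14}$ of the lateral surface: the surface layer $\gamma_0'w$ of a density in $H^{\frac12,\frac14}(\Sigma)$ lies in $H^{1,\frac12}$-dual spaces with one derivative to spare in space (by the trace estimates, $\gamma_0'$ maps $H^{\frac12,\frac14}(\Sigma)$ continuously into $\widetilde H^{-\frac12,-\frac14}$ supported on $\Sigma$, hence, composing with one normal derivative, into $H^{-\frac32,\cdot}$ — but the anisotropic bookkeeping must be done as in Costabel so that the combination $\partial_\nbfm(\gamma_0'w)$ lands exactly in $H^{-1,-\frac12}$). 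The velocity term $\frac12\langle\Vbfm,\nbfm\rangle w$ is, by \eqref{eq.uniformity_cond}, multiplication of $w$ by a $C^1$ coefficient, so it is harmless and its surface layer is even more regular than needed. Then convolution with $\widetilde G$ gains the two orders (one parabolic degree in time, two in space) guaranteed by the heat kernel mapping property used in \cite[Proposition~3.3]{Costabel1990}, landing $\widetilde\Kcal w$ in $H^{1,\frac12}(\Rbbb^{d+1})$; applying $(\partial_t-\Delta)$ annihilates it away from $\Sigma$ so $(\partial_t-\Delta)\widetilde\Kcal w = 0$ in $Q_T$, giving membership in $H^{1,\frac12}(Q_T;\partial_t-\Delta)$, and causality of $\widetilde G$ (support in $t>0$) plus the zero initial datum yields the subscript $;0,$.

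Finally, the transfer from the cylinder to the tube is via the diffeomorphism $\kappabfm$: by definition $\|\cdot\|_{H^{r,s}(Q_T)} = \|\cdot\circ\kappabfm\|_{H^{r,s}(Q_0)}$ and similarly on the boundary, and the uniformity condition \eqref{eq.uniformity_cond} makes the pull-back of the surface measure, the normal $\nbfm$, and the velocity field $\Vbfm = \partial_t\kappabfm\circ\kappabfm^{-1}$ all bounded in $C^1$; so the whole estimate can be read off by composing with $\kappabfm$, exactly as was done for the Dirichlet trace in Section~\ref{subsec.trace_op_non_cylindrical_domains} and for $\widetilde\Vcal$ in Lemma~\ref{lem.mapping_properties_single_layer_pot}. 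I expect the main obstacle to be the anisotropic Sobolev bookkeeping for the surface-layer distribution $\gamma_1'w$: one must check carefully that the normal derivative of the single-layer distribution of an $H^{\frac12,\frac14}(\Sigma_T)$-density, after the two-order smoothing gain of the heat potential, lands precisely in $H^{1,\frac12}$ and not one notch worse — this is where the delicate $r\ge s$ interplay noted in Remark~(ii) after \eqref{eq.id_stefan2} and the precise trace-space indices from \cite{Costabel1990} have to be invoked, and where the velocity-correction term must be verified to be strictly lower order so that it does not spoil the count.
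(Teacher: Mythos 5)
Your reduction of $\widetilde\Kcal$ to a convolution $\widetilde G\star\mu$ with $\mu=(\gamma_1^+)'w$ breaks down exactly at the point you yourself flag as ``the main obstacle''. For the single layer the duality bookkeeping works because the Dirichlet trace $\gamma_0$ is bounded from $H^{1,\frac12}(\Rbbb^{d+1})$ to $H^{\frac12,\frac14}$ of the lateral boundary, so its adjoint places $\gamma_0'\psi$ in $H^{-1,-\frac12}(\Rbbb^{d+1})$ and the two-order parabolic gain of convolution with $\widetilde G$ lands in $H^{1,\frac12}$. The Neumann trace enjoys no such boundedness on the plain space $H^{1,\frac12}$: it is only defined on the graph space $H^{1,\frac12}(\,\cdot\,;\partial_t-\Delta)$ (Definition \ref{def.gamma_1_minus}, Lemma \ref{lem.mapping_properties_gamma_1_minus}), so $(\gamma_1^+)'w$ lies only in the dual of that graph space, not in $H^{-1,-\frac12}(\Rbbb^{d+1})$, and no anisotropic bookkeeping repairs this. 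Concretely, a surface layer is no better than $H^{-\frac12-\epsilon}$ in the spatial variable even for a smooth density, its normal derivative no better than $H^{-\frac32-\epsilon}$, so the two-order gain of the heat potential yields only $H^{\frac12-\epsilon}$ in space, half an order short of $H^{1,\frac12}$; the same half-order deficit is why the elliptic double layer's $H^{\frac12}(\Gamma)\to H^1(\Omega)$ mapping property is never proved by naive order counting either. The velocity-correction term is, as you say, harmless, but it is not where the difficulty lies, and Costabel performs no such direct estimate for the double layer.

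The paper (following Costabel, pg.~515) avoids the direct estimate entirely by an indirect composition argument: apply the representation formula \eqref{eq.representation_formula} to $u=\Tcal g$, the solution of the Dirichlet problem \eqref{eq.Dirichlet_Prob_solution_op}, to obtain $\Tcal g=\widetilde\Vcal\gamma_1^-\Tcal g-\widetilde\Kcal g$, i.e.\ $\widetilde\Kcal=\widetilde\Vcal\gamma_1^-\Tcal-\Tcal$, and then read off continuity from the already established continuity of $\Tcal$ (Theorem \ref{thm.solution_operator_isomorphism}), of $\gamma_1^-$ (Lemma \ref{lem.mapping_properties_gamma_1_minus}) and of $\widetilde\Vcal$ (Lemma \ref{lem.mapping_properties_single_layer_pot}). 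If you insist on a direct proof you would have to recover the missing half order from the structure of the kernel, e.g.\ by rewriting its normal derivative in terms of tangential derivatives and integrating by parts on $\Sigma_T$, which is considerably more involved than the composition argument; as written, your plan does not yield the stated target space.
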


\begin{proof}
The proof is in complete analogy to \cite[pg.~515]{Costabel1990}, 
but we repeat it for the convenience of the reader. We consider 
the solution operator $\Tcal$, which maps the Dirichlet data $g$ 
to the solution $u \isdef \Tcal g$ of the partial differential equation 
\eqref{eq.Dirichlet_Prob_solution_op}.
According to Theorem \ref{thm.solution_operator_isomorphism}, 
the solution operator $\Tcal$ is a continuous mapping
\begin{equation}\label{eq.mapping_property_sol_op}
\Tcal \colon  H^{\frac12, \frac14} (\Sigma_T) 
\to H^{1, \frac12}_{;0,} (Q_T; \partial_t - \Delta). 
\end{equation}
The representation formula \eqref{eq.representation_formula} 
yields $u (t, \widetilde{\xbfm}) = \widetilde{\Vcal} \gamma_1^- 
u (t, \widetilde{\xbfm}) - \widetilde{\Kcal} u (t, \widetilde{\xbfm})$ 
and thus $\Tcal g = \widetilde{\Vcal} \gamma_1^- \Tcal g - \widetilde{\Kcal} g$ 
for all $g \in H^{\frac12, \frac14} (\Sigma_T)$. Rearranging gives hence
\[ 
\widetilde{\Kcal} = \widetilde{\Vcal} \gamma_1^- \Tcal - \Tcal. 
\]
The claim follows now by using the mapping property \eqref{eq.mapping_property_sol_op}
of $\Tcal$, $\widetilde{\Vcal}$ (Lemma \ref{lem.mapping_properties_single_layer_pot}), and 
$\gamma_1^-$ (Lemma \ref{lem.mapping_properties_gamma_1_minus}).
\end{proof}

We can take the traces $\gamma_0$ of the single and double layer potential. 
Let the radius $R$ be large enough such that the boundary $\Gamma_t$ is contained 
in the ball $B_R \isdef \big\lbrace \xbfm \in \Rbbb^d \colon \| \xbfm \| < R \big\rbrace$ 
and set $\Omega_t^c \isdef B_R \backslash \overline{\Omega}_t$ and 
$Q_T^c \isdef\bigcup_{-\infty < t< T}(\lbrace t \rbrace \times \Omega_t^c)$. 
Lemmata \ref{lem.mapping_properties_single_layer_pot} and 
\ref{lem.mapping_properties_double_layer_pot} provide 
also the continuity of the mappings
\[ 
\widetilde{\Vcal} \colon H^{-\frac12, -\frac14} (\Sigma_T) 
\to H^{1, \frac12}_{;0,} (Q_T^c; \partial_t - \Delta)
\]
and
\[ 
\widetilde{\Kcal} \colon H^{\frac12, \frac14} (\Sigma_T) 
\to H^{1, \frac12}_{;0,} (Q_T^c; \partial_t - \Delta). 
\]

In order to state the tube analogue of \cite[Theorem 3.4]{Costabel1990},
we define the jumps as in \cite[Formula (3.16)]{Costabel1990} in accordance 
with
\[
[\gamma_0 u] \isdef \gamma_0(u|_{Q_T^c}) - \gamma_0 (u|_{Q_T}), \quad
[\gamma_1^\pm u ] \isdef \gamma_1^\pm (u|_{Q_T^c}) - \gamma_1^\pm (u|_{Q_T}).
\]
We then have:

\begin{lemma}\label{lem.jump_relations}
For all $\psi \in H^{-\frac12, -\frac14} (\Sigma_T)$ and all 
$w \in H^{\frac12, \frac14} (\Sigma_T)$, there hold the 
jump relations
\[
\begin{aligned}
  [\gamma_0 \widetilde{\Vcal} \psi ] &= 0, \ \ && [\gamma_1^- \widetilde{\Vcal} \psi ] = - \psi, \\
  [\gamma_0 \widetilde{\Kcal} w ] & = w, \ \ && [\gamma_1^- \widetilde{\Kcal} w]  = 0.
\end{aligned}
\]
\end{lemma}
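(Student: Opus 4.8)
The plan is to mimic the strategy of \cite[Theorem 3.4]{Costabel1990}, exploiting the fact that the jump behaviour of the layer potentials is, in essence, a local statement inherited from the convolution structure $\widetilde{\Vcal}\varphi = \widetilde G \star (\gamma_0'\varphi)$ established in \eqref{eq.V_tilde_with_G_tilde}. First I would record that, by Lemmata \ref{lem.mapping_properties_single_layer_pot} and \ref{lem.mapping_properties_double_layer_pot} together with the extension to $Q_T^c$ noted just above, both $\widetilde{\Vcal}\psi$ and $\widetilde{\Kcal}w$ lie in $H^{1,\frac12}_{;0,}(Q_T;\partial_t-\Delta)$ and in $H^{1,\frac12}_{;0,}(Q_T^c;\partial_t-\Delta)$, so all four traces in the statement are well defined as elements of the appropriate anisotropic trace spaces; moreover $(\partial_t-\Delta)\widetilde{\Vcal}\psi=0$ and $(\partial_t-\Delta)\widetilde{\Kcal}w=0$ in $Q_T$ and in $Q_T^c$ since the kernel solves the heat equation off the diagonal. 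This immediately gives the two "trivial" jumps: $[\gamma_0\widetilde{\Vcal}\psi]=0$ because $\widetilde G\star(\gamma_0'\psi)$, viewed as a function on $B_R\times(0,\infty)$, belongs to $H^{1,\frac12}$ across $\Sigma_T$ (the single layer potential of an $H^{-1/2,-1/4}$ density has no Dirichlet jump), and $[\gamma_1^-\widetilde{\Kcal}w]=0$ by the analogous regularity of the double layer potential's conormal derivative — exactly as in \cite{Costabel1990}, only now with the velocity correction built into $\gamma_1^-$.

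For the two non-trivial jumps I would argue by duality/Green's formula rather than by a direct singular-integral computation. The cleanest route: test against smooth boundary data. Fix $\varphi\in\widehat{C}^2(\overline{Q}_T)$ and let $u\isdef\widetilde{\Vcal}\psi$. Apply Green's first formula from Lemma \ref{lem.green_formula_tube}(i) separately on $Q_T$ and on $Q_T^c$ (with the roles chosen so the boundary terms on $\partial B_R$ and at the temporal caps vanish, using the zero initial/end conditions and that $\widetilde G$ decays), subtract, and use $(\partial_t-\Delta)u=0$ on both sides together with $[\gamma_0 u]=0$. The interior integrals $\int\langle\nabla u,\nabla\varphi\rangle+d(u,\varphi)$ combine, via the representation formula \eqref{eq.representation_formula} and \eqref{eq.V_tilde_with_G_tilde}, to reproduce the duality pairing $\langle\psi,\gamma_0\varphi\rangle$; what remains on the boundary is precisely $\langle[\gamma_1^-\widetilde{\Vcal}\psi],\gamma_0\varphi\rangle$. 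Matching the two and invoking the density of $\{\gamma_0\varphi\}$ in $H^{\frac12,\frac14}(\Sigma_T)$ (Lemma \ref{lem.combined_trace_map_dense}) yields $[\gamma_1^-\widetilde{\Vcal}\psi]=-\psi$. The identity $[\gamma_0\widetilde{\Kcal}w]=w$ is obtained symmetrically, now testing the double layer potential against $\gamma_1^+\varphi$ and using the second component of the combined-trace density, with the $\langle\Vbfm,\nbfm\rangle$ correction terms in $\gamma_1^\pm$ cancelling in the difference because they are continuous across $\Sigma_T$ (they only involve $\Vbfm$ and the spatial normal, which are the same from either side).

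The main obstacle I anticipate is \emph{not} the algebra of the jump — that is dictated by the $\delta$-behaviour of $\widetilde G$ at $t=0$, exactly as in the cylindrical case — but rather keeping track of the velocity correction in $\gamma_1^\pm$ and making sure the boundary term $\frac12\int\langle\Vbfm,\nbfm\rangle\,uv$ appearing in $d(u,v)$ is handled consistently on both $Q_T$ and $Q_T^c$: the outward normal of $Q_T^c$ along $\Sigma_T$ is the negative of that of $Q_T$, so one must check that the sign conventions make the correction term cancel in $[\gamma_0\widetilde{\Kcal}w]$ and reinforce (rather than spoil) the $-\psi$ in $[\gamma_1^-\widetilde{\Vcal}\psi]$. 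A secondary technical point is justifying that all boundary contributions on the artificial outer sphere $\partial B_R$ and at the time-caps $t\to-\infty$, $t=T$ vanish; this follows from the support/decay properties of $\widetilde G$ and the zero-initial/end conditions encoded in the spaces, but should be stated carefully. Once these bookkeeping issues are settled, the argument is a line-by-line transcription of \cite[Proof of Theorem 3.4]{Costabel1990} into the tube setting, as the authors have been doing throughout.
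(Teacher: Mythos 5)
Your treatment of $[\gamma_0\widetilde{\Vcal}\psi]=0$, of $[\gamma_1^-\widetilde{\Vcal}\psi]=-\psi$, and of $[\gamma_0\widetilde{\Kcal}w]=w$ is essentially the paper's argument (global $H^{1,\frac12}$ regularity of $\widetilde{\Vcal}\psi$ across $\Sigma_T$ for the first; the distributional identities $(\partial_t-\Delta)\widetilde{\Vcal}\psi=\gamma_0'\psi$, $(\partial_t-\Delta)\widetilde{\Kcal}w=(\gamma_1^+)'w$ combined with Green's formulas on $Q_T$ and $Q_T^c$ and a density argument for the other two). The genuine gap is your claim that $[\gamma_1^-\widetilde{\Kcal}w]=0$ follows ``by the analogous regularity of the double layer potential's conormal derivative.'' There is no such regularity argument available: unlike $\widetilde{\Vcal}\psi$, the potential $u=\widetilde{\Kcal}w$ does \emph{not} lie in $H^{1,\frac12}\big((0,T)\times B_R;\partial_t-\Delta\big)$ globally, precisely because its Dirichlet trace jumps by $w$; the conormal traces from the two sides are only defined weakly via $b^-$ on each subdomain separately, and their equality is one of the two non-trivial conclusions of the lemma, not an input. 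Moreover, your side remark that the $\langle\Vbfm,\nbfm\rangle$ correction terms ``cancel in the difference because they are continuous across $\Sigma_T$'' is false when applied to $u=\widetilde{\Kcal}w$ itself: the correction is $\tfrac12\langle\Vbfm,\nbfm\rangle\,\gamma_0 u$, and $\gamma_0 u$ jumps by $w$, so $[\gamma_1^-\widetilde{\Kcal}w]=0$ actually encodes the non-trivial statement that the jump of the classical normal derivative equals $-\tfrac12\langle\Vbfm,\nbfm\rangle w$ — this is exactly the velocity-correction feature the lemma is designed to capture, and it cannot be waved away.

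The paper closes this by deriving \emph{both} double-layer jumps simultaneously: testing against globally smooth $\varphi$ yields
\[
\big\langle [\gamma_1^- \widetilde{\Kcal}w],\,\gamma_0\varphi\big\rangle
=\big\langle [\gamma_0 \widetilde{\Kcal}w]-w,\,\gamma_1^+\varphi\big\rangle ,
\]
and then Lemma \ref{lem.combined_trace_map_dense} — density of the range of the \emph{combined} map $(\gamma_0,\gamma_1^+)$ in $H^{\frac12,\frac14}(\Sigma_T)\times H^{-\frac12,-\frac14}(\Sigma_T)$, not merely of $\{\gamma_0\varphi\}$ — forces both sides to vanish identically, giving $[\gamma_1^-\widetilde{\Kcal}w]=0$ and $[\gamma_0\widetilde{\Kcal}w]=w$ at once. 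This is why that lemma is stated for the pair of traces. If you replace your regularity shortcut by this simultaneous argument (and use globally defined test functions, e.g.\ in $C_0^2(\Rbbb_+\times B_R)$, rather than elements of $\widehat{C}^2(\overline{Q}_T)$, which live only on $\overline{Q}_T$), your proof coincides with the paper's.
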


\begin{proof}
We mimic the proof of \cite{Costabel1990} without using the
time reversal map. Let $\psi \in H^{-\frac12, -\frac14} (\Sigma_T)$. 
We set $u \isdef \widetilde{\Vcal}\psi$. Due to the mapping property 
of the single layer potential, we then have $ u \in H^{1, \frac12}_{;0,} 
\big( (0,T) \times B_R (0) \big)$ and thus, by the trace lemma, we 
have $\gamma_0 (u|_{Q_T}) = \gamma_0 (u|_{Q_T^c})$. 
	
Let us next consider the normal jump of $\widetilde{\Vcal}$. 
From \eqref{eq.V_tilde_with_G_tilde}, we obtain by considering 
$u = \widetilde{\Vcal} \psi$
\[ 
(\partial_t - \Delta) u = \gamma_0 ' \psi
\]
in $\Rbbb_+ \times \Rbbb^d$. We consider any test function 
$\varphi \in C^2_0 \big( (0,T) \times B_R \big)$ and we obtain
\[ 
\langle \psi, \gamma_0 \varphi \rangle = \langle \gamma_0' \psi, \varphi \rangle 
= \big\langle (\partial_t - \Delta) u , \varphi \big\rangle 
= - \big\langle u, (\partial_t + \Delta)  \varphi \big\rangle, 
\]
where the last equality holds due to the integration by parts 
on a cylindrical domain. We thus have
\begin{equation}\label{eq.proof_jump_cond_representation_1}
\langle \psi, \gamma_0  \varphi \rangle 
= - \int_{(0,T) \times B_R} (\partial_t + \Delta) \varphi  u \, \drm (t,\xbfm).
\end{equation}

On the other hand, we can use Green's second formula, 
given in Lemma \ref{lem.green_formula_tube} in $Q_T$ and $Q_T^c$, 
where we use that $(\partial_t - \Delta) u = 0$ in $Q_T \cup Q_T^c$. 
This yields
\[ \int_{Q_T} (\partial_t + \Delta) \varphi  u \, \drm (t,\xbfm) 
=  \langle \gamma_0 u, \gamma_1^+  \varphi \rangle - \langle \gamma_1^- u , \gamma_0  \varphi \rangle 
\]
and
\[ \int_{Q_T^c} (\partial_t + \Delta) \varphi \ u \, \drm (t,\xbfm) 
= -  \langle \gamma_0 u, \gamma_1^+  \varphi \rangle +  \langle \gamma_1^- u, \gamma_0  \varphi \rangle.
\]
Adding these two expressions yields
\begin{equation}\label{eq.proof_jump_cond_representation_2}
\int_{(0,T) \times B_R} (\partial_t + \Delta) \varphi  u \, \drm (t,\xbfm) 
=  \big\langle [ \gamma_1^- u ], \gamma_0  \varphi \rangle,
\end{equation}
where we used $[\gamma_0 u] = 0 = [\gamma_0  \varphi ] = [ \gamma_1^+ \varphi]$. 
Comparing \eqref{eq.proof_jump_cond_representation_1} with
\eqref{eq.proof_jump_cond_representation_2} results in $[\gamma_1^- u] = - \psi$.
	
We are left with proving the jump relations for the double layer 
potential. To that end, we choose $w \in H^{\frac12, \frac14} (\Sigma_T)$ 
and define $u \isdef \widetilde{\Kcal} w$. Let $\varphi \in C_0^\infty(\Rbbb_+ \times B_R)$ 
be a test function. As above, we obtain
\begin{equation}\label{eq.proof_jump_cond_representation_3}
\int_{(0,T) \times B_R} (\partial_t + \Delta) \varphi  u \, \drm (t,\xbfm) 
= \big\langle [\gamma_1^- u], \gamma_0  \varphi \big\rangle 
- \big\langle [\gamma_0 u], \gamma_1^+  \varphi \big\rangle. 
\end{equation}
For $\widetilde{\Kcal}$, we obtain $\widetilde{\Kcal} w 
= \widetilde{G} \star \big( (\gamma_1^+)' w \big)$ similar to 
\eqref{eq.V_tilde_with_G_tilde}. Therefore, we have $(\partial_t - \Delta) 
\widetilde{\Kcal} w = (\gamma_1^+)' w$ in $\Rbbb_+ \times B_R$. 
From here, it follows that
\begin{equation}\label{eq.proof_jump_cond_representation_4}
  - \int_{(0,T) \times B_R} (\partial_t + \Delta) \varphi  u \, \drm (t,\xbfm)
  	= \big\langle (\partial_t - \Delta) u,  \varphi \big\rangle
	= \big\langle (\gamma_1^+)' w,  \varphi \big\rangle
	= \langle w, \gamma_1^+  \varphi \rangle.
\end{equation}
Comparing \eqref{eq.proof_jump_cond_representation_3} with
\eqref{eq.proof_jump_cond_representation_4} yields
\begin{equation}\label{eq.proof_jump_cond_representation_5}
\big\langle [\gamma_1^- u], \gamma_0  \varphi \big\rangle 
= \big\langle [\gamma_0 u] - w, \gamma_1^+  \varphi \big\rangle
\end{equation}
for all $\varphi \in C_0^2(\Rbbb_+ \times B_R)$. 
Applying Lemma \ref{lem.combined_trace_map_dense} says 
that both sides of \eqref{eq.proof_jump_cond_representation_5} 
have to vanish identically, from where $[\gamma_1^- u] =0$
and $[\gamma_0 u] = w$ follows.
\end{proof}

Now, as in \cite[Definition 3.5]{Costabel1990}, we are in the
position to define the boundary integral operators. 

\begin{definition}\label{def.layer_operators}
Let $\psi \in H^{-\frac12, -\frac14} (\Sigma_T)$ and 
$w\in H^{\frac12, \frac14} (\Sigma_T)$. We can then 
define the single layer operator as
\[ 
\Vcal \psi \isdef \gamma_0 \widetilde{\Vcal} \psi,
\]
the adjoint double layer operator as
\[ 
\Kcal' \psi \isdef \frac12 \left( \gamma_1^- (\widetilde{\Vcal} \psi )|_{Q_T} 
+ \gamma_1^- (\widetilde{\Vcal} \psi)|_{Q_T^c} \right),  
\]
the double layer operator as
\[ \Kcal w \isdef \frac12 \left( \gamma_0 (\widetilde{\Kcal} w)|_{Q_T} 
+ \gamma_0(\widetilde{\Kcal} w)|_{Q_T^c} \right) 
\]
and the hypersingular operator as
\[ 
\Dcal w \isdef - \gamma_1^- \widetilde{\Kcal} w.
\]
\end{definition}

As in \cite[Theorem 3.7]{Costabel1990}, we have the following 
mapping properties of these operators.

\begin{theorem}
The boundary integral operators from Definition 
\ref{def.layer_operators} are continuous mappings as follows
\begin{align*}
  \Vcal \colon  H^{-\frac12, -\frac14} (\Sigma_T) &\to H^{\frac12, \frac14} (\Sigma_T), \\
  \Kcal ' \colon H^{-\frac12, -\frac14} (\Sigma_T) &\to H^{-\frac12, -\frac14} (\Sigma_T), \\
  \Kcal \colon  H^{\frac12, \frac14} (\Sigma_T) &\to H^{\frac12, \frac14} (\Sigma_T), \\
  \Dcal \colon  H^{\frac12, \frac14} (\Sigma_T) & \to H^{-\frac12, -\frac14} (\Sigma_T).
\end{align*}
\end{theorem}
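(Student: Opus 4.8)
The plan is to obtain each of the four mapping properties directly from the corresponding mapping properties of the layer \emph{potentials} established in Lemmata \ref{lem.mapping_properties_single_layer_pot} and \ref{lem.mapping_properties_double_layer_pot}, composed with the trace operators $\gamma_0$ and $\gamma_1^-$, whose continuity is known from Section \ref{subsec.trace_op_non_cylindrical_domains} and Lemma \ref{lem.mapping_properties_gamma_1_minus}. Since every boundary integral operator in Definition \ref{def.layer_operators} is, by construction, a (half-sum of) trace(s) of $\widetilde{\Vcal}\psi$ or $\widetilde{\Kcal}w$ taken from the interior tube $Q_T$ and, where applicable, from the exterior tube $Q_T^c$, the argument is essentially bookkeeping: feed the output space of the potential into the input space of the appropriate trace.

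Concretely, I would proceed operator by operator. For $\Vcal=\gamma_0\widetilde{\Vcal}$: by Lemma \ref{lem.mapping_properties_single_layer_pot}, $\widetilde{\Vcal}\colon H^{-\frac12,-\frac14}(\Sigma_T)\to H^{1,\frac12}_{;0,}(Q_T;\partial_t-\Delta)$ is continuous, and by Lemma \ref{lem.Dirichlet_trace_cont_and_surj} (transported to the tube) $\gamma_0\colon H^{1,\frac12}_{;0,}(Q_T)\to H^{\frac12,\frac14}(\Sigma_T)$ is continuous, so the composition maps $H^{-\frac12,-\frac14}(\Sigma_T)$ continuously to $H^{\frac12,\frac14}(\Sigma_T)$. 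For $\Kcal'=\tfrac12\big(\gamma_1^-(\widetilde{\Vcal}\psi)|_{Q_T}+\gamma_1^-(\widetilde{\Vcal}\psi)|_{Q_T^c}\big)$: here I use that $\widetilde{\Vcal}$ also maps continuously into $H^{1,\frac12}_{;0,}(Q_T^c;\partial_t-\Delta)$ (noted in the excerpt right after Lemma \ref{lem.mapping_properties_double_layer_pot}), and then apply $\gamma_1^-\colon H^{1,\frac12}(Q_T;\partial_t-\Delta)\to H^{-\frac12,-\frac14}(\Sigma_T)$ from Lemma \ref{lem.mapping_properties_gamma_1_minus} on each side; averaging two continuous maps into the same target space is continuous. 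The operators $\Kcal=\tfrac12\big(\gamma_0(\widetilde{\Kcal}w)|_{Q_T}+\gamma_0(\widetilde{\Kcal}w)|_{Q_T^c}\big)$ and $\Dcal=-\gamma_1^-\widetilde{\Kcal}$ are handled identically, now starting from Lemma \ref{lem.mapping_properties_double_layer_pot}, which gives $\widetilde{\Kcal}\colon H^{\frac12,\frac14}(\Sigma_T)\to H^{1,\frac12}_{;0,}(Q_T;\partial_t-\Delta)$ (and into the $Q_T^c$-version), and then composing with $\gamma_0$ or $\gamma_1^-$ respectively.

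The only genuinely delicate point---and the one I would be most careful about---is the exterior trace used in $\Kcal'$ and $\Kcal$: one must ensure that $\gamma_1^-$ and $\gamma_0$ are well-defined and continuous as operators \emph{from the exterior tube} $Q_T^c=\bigcup_{-\infty<t<T}(\{t\}\times\Omega_t^c)$, where $\Omega_t^c=B_R\setminus\overline{\Omega}_t$, and that the restriction maps $u\mapsto u|_{Q_T^c}$ are continuous on the relevant graph-norm spaces. This is exactly analogous to the exterior-domain discussion in \cite[Theorem 3.7]{Costabel1990}; since $\Omega_t^c$ is again a Lipschitz domain moving under the same $C^2$-diffeomorphism $\kappabfm$, and since Lemma \ref{lem.jump_relations} already implicitly relied on these exterior traces being meaningful, the same transport-of-norms argument via $\kappabfm$ that underlies all of Sections \ref{subsec.trace_op_non_cylindrical_domains} and \ref{subsec.neumann_trace_tube} applies verbatim on $Q_T^c$. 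Once this is granted, the proof is the routine composition argument sketched above, so I would simply write: \emph{The claims follow by composing the mapping properties of the layer potentials (Lemmata \ref{lem.mapping_properties_single_layer_pot} and \ref{lem.mapping_properties_double_layer_pot}, together with their exterior-tube versions) with the continuity of the trace operators $\gamma_0$ and $\gamma_1^-$ (Section \ref{subsec.trace_op_non_cylindrical_domains} and Lemma \ref{lem.mapping_properties_gamma_1_minus}), exactly as in \cite[Theorem 3.7]{Costabel1990}.}
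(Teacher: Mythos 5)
Your proposal matches the paper's own proof, which likewise obtains all four mapping properties by composing the continuity of the layer potentials (Lemmata \ref{lem.mapping_properties_single_layer_pot} and \ref{lem.mapping_properties_double_layer_pot}, including their exterior-tube versions noted just before Lemma \ref{lem.jump_relations}) with the continuity of the trace operators $\gamma_0$ and $\gamma_1^-$ from Section \ref{subsec.trace_op_non_cylindrical_domains} and Lemma \ref{lem.mapping_properties_gamma_1_minus}. Your more explicit attention to the exterior traces on $Q_T^c$ is a sound elaboration of the same argument, not a different route.
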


\begin{proof}
The assertion follows immediately by using the mapping 
properties of the layer potentials from Lemma 
\ref{lem.mapping_properties_single_layer_pot} and 
Lemma \ref{lem.mapping_properties_double_layer_pot} 
as well as of the trace operators introduced in Section 
\ref{subsec.trace_op_non_cylindrical_domains} and 
from Lemma \ref{lem.mapping_properties_gamma_1_minus}. 
\end{proof}

We can state the analogue of \cite[Formulae (3.24)--(3.27)]{Costabel1990}.
\begin{lemma}\label{lem.jump_relations_one_sided}
It holds
\begin{align*}
  \gamma_0 (\widetilde{\Vcal} \psi) |_{Q_T} & = \gamma_0 (\widetilde{\Vcal} \psi)|_{Q_T^c} = \Vcal \psi, \\
  \gamma_1^- (\widetilde{\Vcal} \psi)|_{Q_T} & = \frac12 \psi + \Kcal ' \psi, \\
  \gamma_1^- (\widetilde{\Vcal} \psi)|_{Q_T^c} & = - \frac12 \psi + \Kcal ' \psi, \\
  \gamma_0 (\widetilde{\Kcal} w)|_{Q_T} & = - \frac12 w + \Kcal w, \\
  \gamma_0 (\widetilde{\Kcal} w)|_{Q_T^c} & = \frac12 w + \Kcal w, \\
  \gamma_1^- (\widetilde{\Kcal}  w)|_{Q_T} & = \gamma_1^- (\Kcal w)|_{Q_T^c} = - \Dcal w.
\end{align*}
\end{lemma}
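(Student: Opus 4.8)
The plan is to derive all six identities directly from the two-sided jump relations in Lemma~\ref{lem.jump_relations} together with the definitions in Definition~\ref{def.layer_operators}. Recall that for $u=\widetilde{\Vcal}\psi$ we already know from Lemma~\ref{lem.mapping_properties_single_layer_pot} that $u\in H^{1,\frac12}_{;0,}\big((0,T)\times B_R\big)$, so $\gamma_0(u|_{Q_T})=\gamma_0(u|_{Q_T^c})$; calling this common value $\Vcal\psi$ by definition gives the first line at once.

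For the conormal traces of the single layer potential, I would combine the two facts $\gamma_1^-(u|_{Q_T^c})-\gamma_1^-(u|_{Q_T})=[\gamma_1^-u]=-\psi$ (from Lemma~\ref{lem.jump_relations}) and $\gamma_1^-(u|_{Q_T})+\gamma_1^-(u|_{Q_T^c})=2\Kcal'\psi$ (the definition of $\Kcal'$). Solving this $2\times2$ linear system yields $\gamma_1^-(\widetilde{\Vcal}\psi)|_{Q_T}=\tfrac12\psi+\Kcal'\psi$ and $\gamma_1^-(\widetilde{\Vcal}\psi)|_{Q_T^c}=-\tfrac12\psi+\Kcal'\psi$. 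The bookkeeping of which side carries the $+\tfrac12$ hinges on the sign convention in the jump $[\gamma_1^-u]=\gamma_1^-(u|_{Q_T^c})-\gamma_1^-(u|_{Q_T})$ adopted just before the statement, so I would spell this out carefully.

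The double layer case is entirely parallel: with $u=\widetilde{\Kcal}w$, Lemma~\ref{lem.jump_relations} gives $[\gamma_0 u]=w$, i.e. $\gamma_0(u|_{Q_T^c})-\gamma_0(u|_{Q_T})=w$, while the definition of $\Kcal$ gives $\gamma_0(u|_{Q_T})+\gamma_0(u|_{Q_T^c})=2\Kcal w$; solving again produces $\gamma_0(\widetilde{\Kcal}w)|_{Q_T}=-\tfrac12 w+\Kcal w$ and $\gamma_0(\widetilde{\Kcal}w)|_{Q_T^c}=\tfrac12 w+\Kcal w$. Finally, for the conormal trace of the double layer potential, the jump $[\gamma_1^-\widetilde{\Kcal}w]=0$ from Lemma~\ref{lem.jump_relations} says $\gamma_1^-(\widetilde{\Kcal}w)|_{Q_T}=\gamma_1^-(\widetilde{\Kcal}w)|_{Q_T^c}$, and this common value equals $-\Dcal w$ by the definition of $\Dcal$.

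I do not anticipate a genuine obstacle here — this lemma is purely algebraic, reorganizing the averaged operators and the jumps into one-sided traces. The only point requiring care is consistency of orientation: the normal $\nbfm_t$ and hence $\gamma_1^-$ on the exterior piece $Q_T^c$ must be taken with the outward normal of $\Omega_t$ (pointing into $\Omega_t^c$), matching the convention used in Lemma~\ref{lem.jump_relations}; once that is fixed, every sign in the six identities is forced. Accordingly I would write the proof as three short paragraphs, one per potential, each solving a two-by-two system, after recalling the orientation convention.

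\begin{proof}
Throughout, the conormal derivative $\gamma_1^-$ on the exterior tube $Q_T^c$ is taken with respect to the normal $\nbfm_t$ of $\Gamma_t$ pointing out of $\Omega_t$, consistently with the jump conventions preceding the statement.

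Let $\psi\in H^{-\frac12,-\frac14}(\Sigma_T)$ and set $u\isdef\widetilde{\Vcal}\psi$. By Lemma~\ref{lem.mapping_properties_single_layer_pot}, $u\in H^{1,\frac12}_{;0,}\big((0,T)\times B_R\big)$, so the Dirichlet trace is two-sided, $\gamma_0(u|_{Q_T})=\gamma_0(u|_{Q_T^c})$, and this common value is $\Vcal\psi$ by Definition~\ref{def.layer_operators}; this is the first identity. For the conormal traces, Lemma~\ref{lem.jump_relations} gives $\gamma_1^-(u|_{Q_T^c})-\gamma_1^-(u|_{Q_T})=[\gamma_1^-\widetilde{\Vcal}\psi]=-\psi$, while Definition~\ref{def.layer_operators} gives $\gamma_1^-(u|_{Q_T})+\gamma_1^-(u|_{Q_T^c})=2\Kcal'\psi$. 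Adding and subtracting these two relations yields
\[
  \gamma_1^-(\widetilde{\Vcal}\psi)|_{Q_T}=\tfrac12\psi+\Kcal'\psi,
  \qquad
  \gamma_1^-(\widetilde{\Vcal}\psi)|_{Q_T^c}=-\tfrac12\psi+\Kcal'\psi.
\]

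Now let $w\in H^{\frac12,\frac14}(\Sigma_T)$ and set $u\isdef\widetilde{\Kcal}w$. By Lemma~\ref{lem.jump_relations}, $\gamma_0(u|_{Q_T^c})-\gamma_0(u|_{Q_T})=[\gamma_0\widetilde{\Kcal}w]=w$, and by Definition~\ref{def.layer_operators}, $\gamma_0(u|_{Q_T})+\gamma_0(u|_{Q_T^c})=2\Kcal w$. Solving this linear system gives
\[
  \gamma_0(\widetilde{\Kcal}w)|_{Q_T}=-\tfrac12 w+\Kcal w,
  \qquad
  \gamma_0(\widetilde{\Kcal}w)|_{Q_T^c}=\tfrac12 w+\Kcal w.
\]
Finally, Lemma~\ref{lem.jump_relations} states $[\gamma_1^-\widetilde{\Kcal}w]=0$, hence $\gamma_1^-(\widetilde{\Kcal}w)|_{Q_T}=\gamma_1^-(\widetilde{\Kcal}w)|_{Q_T^c}$, and by Definition~\ref{def.layer_operators} this common value equals $-\Dcal w$. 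This establishes the last line and completes the proof.
\end{proof}
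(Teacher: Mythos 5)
Your proof is correct and follows essentially the same route as the paper: the paper's own proof treats only the identity for $\gamma_1^-(\widetilde{\Vcal}\psi)|_{Q_T}$ by combining the jump relation from Lemma \ref{lem.jump_relations} with the definition of $\Kcal'$ and solving for the one-sided trace, remarking that the remaining identities follow similarly. You simply carry out this same algebraic resolution explicitly for all six identities, with the correct sign conventions throughout.
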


\begin{proof}
We just prove the second statement, as the other statements follow 
similarly. According to Lemma \ref{lem.jump_relations}, we have
\[ 
[\gamma_1^- \widetilde{\Vcal} \psi] 
= \gamma_1^- \widetilde{\Vcal} \psi|_{Q_T^c} - \gamma_1^- \widetilde{\Vcal} \psi|_{Q_T} 
= - \psi. 
\]
Therefore,
\[ \gamma_1^- \widetilde{\Vcal} \psi|_{Q_T} 
= \psi + \gamma_1^- \widetilde{\Vcal} \psi|_{Q_T^c}. 
\]
By Definition \ref{def.layer_operators}, we have
\[ \Kcal ' \psi \isdef \frac12 \left( \gamma_1^- \widetilde{\Vcal} \psi |_{Q_T} 
+ \gamma_1^- \widetilde{\Vcal} \psi|_{Q_T^c} \right). 
\]
Substituting this into the expression above yields
\[ \gamma_1^- \widetilde{\Vcal} \psi|_{Q_T} 
= \psi + 2 \Kcal ' \psi - \gamma_1^- \widetilde{\Vcal} \psi|_{Q_T}, 
\]
from where the claim follows immediately. 
\end{proof}

\begin{remark}
Following \cite[Formulae (7)--(10)]{Tausch2019}, the relations 
in the interior given in Lemma \ref{lem.jump_relations_one_sided} 
can also be written as
\begin{align*}
\gamma_0 \widetilde{\Vcal} \psi (t, \xbfm) 
&= \int_{\Sigma_T} G(t, \tau, \xbfm, \ybfm) \psi (\tau, \ybfm) \, \drm\sigma_{(\tau,\ybfm)},\\
\gamma_1^-  \widetilde{\Vcal} \psi (t, \xbfm) 
&= \frac12 \psi (t, \xbfm) + \int_{\Sigma_T} \gamma_{1, (t,\xbfm)}^- 
G(t, \tau, \xbfm, \ybfm) \psi(\tau, \ybfm) \, \drm\sigma_{(\tau,\ybfm)},\\
\gamma_0 \widetilde{\Kcal} w (t, \xbfm) 
&= -\frac12 w(t, \xbfm) + \int_{\Sigma_T} \gamma_{1, (\tau, \ybfm)}^+ 
G(t, \tau, \xbfm, \ybfm) w (\tau, \ybfm) \, \drm\sigma_{(\tau,\ybfm)}, \\
\gamma_1^- \widetilde{\Kcal} w (t, \xbfm) 
&= - \int_{\Sigma_T} \gamma_{1, (t, \xbfm)}^- \gamma_{1, (\tau, \ybfm)}^+ 
G(t, \tau, \xbfm, \ybfm) w (\tau, \ybfm) \, \drm\sigma_{(\tau,\ybfm)}.
\end{align*}
\end{remark}

We can take the traces in the representation formula \eqref{eq.representation_formula} 
to obtain the Dirichlet data and the Neumann data of the solution $u$ 
of the homogeneous heat equation. This yields
\begin{align}\label{eq.boundary_integral_eq1}
  \gamma_0 u & = \frac12 \gamma_0 u - \Kcal \gamma_0 u + \Vcal \gamma_1^- u, \\
\label{eq.boundary_integral_eq2}
  \gamma_1^- u & = \Dcal \gamma_0 u + \frac12 \gamma_1^- u + \Kcal ' \gamma_1^- u,
\end{align}
compare also \cite[Formulae (11) and (12)]{Tausch2019}.

As in \cite[pg.~518]{Costabel1990}, we can define the 
Calder{\'o}n projector and the associated involution $\Acal$ as
\[
  \Ccal_{Q_T} \isdef \frac12 \on{id} + \Acal \isdef \frac12 \on{id} + \begin{bmatrix}
	- \Kcal & \Vcal \\
	\Dcal & \Kcal ' \end{bmatrix}.
\]
We state next the analogue of \cite[Theorem 3.9]{Costabel1990}.

\begin{theorem}
The operator $\Ccal_{Q_T}$  is a projection operator in the space
\[ 
  \Hcal \isdef H^{\frac12, \frac14} (\Sigma_T) \times H^{-\frac12, -\frac14} (\Sigma_T).
\]
The following statements are equivalent for $(w, \psi) \in \Hcal$:
\begin{enumerate}[(i)]
\item There is a $u \in H^{1, \frac12}_{;0,} (Q_T)$ with $(\partial_t - \Delta ) u = 0$ 
in $Q_T$ and $w = \gamma_0 u$, $\psi = \gamma_1^- u$ on $\Sigma_T$. 
\item It holds
\[ 
\begin{bmatrix} w \\ \psi \end{bmatrix} = \Ccal_{Q_T} 
	\begin{bmatrix}w \\ \psi \end{bmatrix}.
\]
\end{enumerate}
\end{theorem}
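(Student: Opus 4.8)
The plan is to transcribe the proof of \cite[Theorem 3.9]{Costabel1990} into the tube setting, the only change being that the cylindrical Neumann trace is replaced by the velocity-corrected trace $\gamma_1^-$ and that the jump relations invoked are those of Lemma \ref{lem.jump_relations} and Lemma \ref{lem.jump_relations_one_sided}. Boundedness of $\Ccal_{Q_T}$ as an operator on $\Hcal$ is immediate from the mapping-property theorem just established for $\Vcal,\Kcal,\Kcal',\Dcal$, so it only remains to prove the idempotency $\Ccal_{Q_T}^2=\Ccal_{Q_T}$ and the equivalence (i)$\Leftrightarrow$(ii).

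For (i)$\Rightarrow$(ii), I would start from a $u\in H^{1,\frac12}_{;0,}(Q_T)$ with $(\partial_t-\Delta)u=0$ and apply the representation formula \eqref{eq.representation_formula}, i.e.\ $u=\widetilde{\Vcal}\gamma_1^- u-\widetilde{\Kcal}\gamma_0 u$ on $Q_T$. Taking the interior traces $\gamma_0$ and $\gamma_1^-$ of both sides and substituting the one-sided relations of Lemma \ref{lem.jump_relations_one_sided} reproduces exactly the boundary integral equations \eqref{eq.boundary_integral_eq1} and \eqref{eq.boundary_integral_eq2}; with $w\isdef\gamma_0 u$ and $\psi\isdef\gamma_1^- u$ these are precisely the two rows of $\begin{bmatrix}w\\\psi\end{bmatrix}=\Ccal_{Q_T}\begin{bmatrix}w\\\psi\end{bmatrix}$.

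For (ii)$\Rightarrow$(i), given $(w,\psi)\in\Hcal$ obeying (ii), I would set $u\isdef\widetilde{\Vcal}\psi-\widetilde{\Kcal}w$ on $Q_T$. By Lemma \ref{lem.mapping_properties_single_layer_pot} and Lemma \ref{lem.mapping_properties_double_layer_pot} this $u$ lies in $H^{1,\frac12}_{;0,}(Q_T;\partial_t-\Delta)\subset H^{1,\frac12}_{;0,}(Q_T)$, and since, as already used in the proof of Lemma \ref{lem.jump_relations}, $(\partial_t-\Delta)\widetilde{\Vcal}\psi=\gamma_0'\psi$ and $(\partial_t-\Delta)\widetilde{\Kcal}w=(\gamma_1^+)'w$ are supported on $\Sigma_T$, one has $(\partial_t-\Delta)u=0$ in the open tube $Q_T$. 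Computing the interior traces via Lemma \ref{lem.jump_relations_one_sided} gives $\gamma_0 u=\frac12 w-\Kcal w+\Vcal\psi$ and $\gamma_1^- u=\Dcal w+\frac12\psi+\Kcal'\psi$, which by hypothesis (ii) equal $w$ and $\psi$ respectively; hence $u$ realises the Cauchy data $(w,\psi)$.

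Finally, idempotency follows by combining the last two steps: for arbitrary $(w,\psi)\in\Hcal$ put $u\isdef\widetilde{\Vcal}\psi-\widetilde{\Kcal}w$; the trace computation above shows $(\gamma_0 u,\gamma_1^- u)=\Ccal_{Q_T}(w,\psi)$, while $u\in H^{1,\frac12}_{;0,}(Q_T)$ solves the homogeneous heat equation, so by (i)$\Rightarrow$(ii) the pair $\Ccal_{Q_T}(w,\psi)$ is a fixed point of $\Ccal_{Q_T}$, that is $\Ccal_{Q_T}^2(w,\psi)=\Ccal_{Q_T}(w,\psi)$. I do not expect a genuine obstacle here: the subtleties of the moving boundary---the extra $\langle\Vbfm,\nbfm\rangle$ term and the signs of the $\frac12$-jumps---have all been absorbed into Lemma \ref{lem.jump_relations} and Lemma \ref{lem.jump_relations_one_sided}, whose proofs in turn rest on the density result Lemma \ref{lem.density_C2_H1} and the combined-trace surjectivity of Lemma \ref{lem.combined_trace_map_dense}; what is left is pure bookkeeping.
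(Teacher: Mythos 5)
Your proposal is correct and follows essentially the same route as the paper: (i)$\Rightarrow$(ii) via the representation formula and the one-sided relations of Lemma \ref{lem.jump_relations_one_sided} (i.e.\ \eqref{eq.boundary_integral_eq1}--\eqref{eq.boundary_integral_eq2}), (ii)$\Rightarrow$(i) by setting $u=\widetilde{\Vcal}\psi-\widetilde{\Kcal}w$ and reading off its Cauchy data, and idempotency by combining the two. Your explicit check that $(\partial_t-\Delta)u=0$ in $Q_T$ is a small detail the paper leaves implicit, but otherwise the arguments coincide.
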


\begin{proof}
We again follow the proof of \cite[Thereom 3.9]{Costabel1990}.
	
(i) $\Rightarrow $ (ii) follows by the considerations above, especially 
in \eqref{eq.boundary_integral_eq1} and \eqref{eq.boundary_integral_eq2}.
	
For the proof of (ii) $\Rightarrow$ (i), $\psi$ and $w$ are given and we define
\begin{equation}\label{eq.proof_calderon_op_def_u}
  u \isdef \widetilde{\Vcal} \psi - \widetilde{\Kcal} w.
\end{equation}
Using the mapping properties of the potentials implies 
that $u \in H^{1, \frac12}_{;0,} (Q_T)$ and we obtain
\begin{equation}\label{eq.proof_calderon_op_gamma_u_and_w}
  \begin{bmatrix}\gamma_0 u \\ \gamma_1^- u\end{bmatrix} = \Ccal_{Q_T} 
   \begin{bmatrix} w \\ \psi \end{bmatrix}.
\end{equation}
Since the right hand side equals to $[w, \psi]^{\intercal}$ 
according to (ii), the claim follows immediately.
	
It remains to show the projection property of $\Ccal_{Q_T}$. Since on
the one hand $[\gamma_0 u, \gamma_1^- u]^{\intercal} = \Ccal_{Q_T} 
[\gamma_0 u, \gamma_1^- u]^{\intercal}$ holds according to 
\eqref{eq.boundary_integral_eq1}, \eqref{eq.boundary_integral_eq2}
and on the other hand \eqref{eq.proof_calderon_op_gamma_u_and_w} 
holds for any $[w, \psi]^{\intercal}$ and $u$ given by \eqref{eq.proof_calderon_op_def_u}, 
we obtain $\Ccal_{Q_T} [w, \psi]^{\intercal} = \Ccal_{Q_T}^2 [w, \psi]^{\intercal}$ 
for any $[w, \psi]^{\intercal}$ and thus
\begin{equation}\label{eq.projection_property_calderon}
   \Ccal_{Q_T}^2 = \Ccal_{Q_T}.
\end{equation}
\end{proof}

We can state the following corollary in analogy to \cite[Corollary 3.10]{Costabel1990}.

\begin{corollary}
The operator $\Acal \colon \Hcal \to \Hcal$ is an isomorphism.
\end{corollary}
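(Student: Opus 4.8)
The plan is to deduce the statement purely algebraically from the projection property \eqref{eq.projection_property_calderon} established in the preceding theorem, together with the continuity of the four boundary integral operators $\Vcal, \Kcal, \Kcal', \Dcal$ already recorded. First I would note that $\Acal\colon \Hcal \to \Hcal$ is a bounded linear operator, since each of its matrix entries maps continuously between the relevant factors of $\Hcal$; hence it suffices to exhibit a bounded two-sided inverse.

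Next I would exploit $\Ccal_{Q_T}^2 = \Ccal_{Q_T}$. Writing $\Ccal_{Q_T} = \tfrac12\on{id} + \Acal$ and expanding,
\[
  \tfrac14\on{id} + \Acal + \Acal^2 = \tfrac12\on{id} + \Acal,
\]
which immediately gives $\Acal^2 = \tfrac14\on{id}$. Equivalently, $(2\Acal)^2 = \on{id}$, so $2\Acal$ is an involution on $\Hcal$ and in particular a bounded bijection with bounded inverse $2\Acal$. Consequently $\Acal$ is invertible with $\Acal^{-1} = 4\Acal$, which is bounded; therefore $\Acal\colon \Hcal \to \Hcal$ is an isomorphism.

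There is no real obstacle here: the substance of the argument was carried out in establishing the mapping properties of the layer operators and the identity \eqref{eq.projection_property_calderon}, exactly as in \cite[Corollary 3.10]{Costabel1990}. If desired, one can additionally remark that $\Acal^2 = \tfrac14\on{id}$ encodes the familiar Calder\'on identities $\Vcal\Dcal = \tfrac14\on{id} - \Kcal^2$, $\Dcal\Vcal = \tfrac14\on{id} - (\Kcal')^2$, $\Kcal\Vcal = \Vcal\Kcal'$, and $\Dcal\Kcal = \Kcal'\Dcal$, obtained by reading off the entries of the matrix identity; these are not needed for the corollary but make the involution structure transparent.
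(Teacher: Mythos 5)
Your proposal is correct and follows essentially the same route as the paper: expand the projection identity $\Ccal_{Q_T}^2 = \Ccal_{Q_T}$ to obtain $\Acal^2 = \tfrac14\on{id}$, hence $\Acal^{-1} = 4\Acal$ is bounded and $\Acal$ is an isomorphism. The added remark about the Calder\'on identities is accurate but, as you note, not needed.
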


\begin{proof}
We use the same argument as in the proof of \cite[Corollary 3.10]{Costabel1990}.
Notice that we can reformulate \eqref{eq.projection_property_calderon} as follows:
\[ 
  \left(\frac12 \on{id} + \Acal\right)^2 = \frac14 \on{id} + \Acal + \Acal^2 = \frac12 \on{id} + \Acal.
\]
We hence conclude
\begin{equation}\label{eq.projection_property_calderon_rewritten}
  \Acal^2 = \frac14 \on{id},
\end{equation}
which is equivalent to
\[ 
  \Acal^{-1} = 4 \Acal. 
\]
\end{proof}

As in \cite{Costabel1990}, we can interchange the columns of the 
operator $\Acal$ to define the operator
\[ 
 A \isdef \begin{bmatrix} \Vcal & - \Kcal \\ \Kcal' & \Dcal \end{bmatrix},
 \]
which is an isomorphism of the space
\[ 
  \Hcal' \isdef H^{-\frac12, -\frac14} (\Sigma_T) \times H^{\frac12, \frac14} (\Sigma_T)
\]
onto its dual space $\Hcal$. Following \cite{Costabel1990}, we define 
the duality product between $\Hcal'$ and $\Hcal$ in accordance with
\[
  \left\langle \begin{bmatrix} \psi \\ w \end{bmatrix}, 
  \begin{bmatrix} v \\ \varphi \end{bmatrix} \right\rangle 
  	\isdef \langle \psi, v \rangle + \langle \varphi, w \rangle
\]
for all $v$, $w \in H^{\frac12, \frac14}(\Sigma_T)$ and $\varphi$, 
$\psi \in H^{-\frac12, - \frac14} (\Sigma_T)$. We are now in the position to
state the analogue of \cite[Theorem 3.11]{Costabel1990}, which is the positive 
definiteness of the operator $A$. 

\begin{theorem}\label{thm.positive_definiteness_calderon}
There exists a constant $\alpha > 0$ such that
\[
  \left\langle \begin{bmatrix} \psi \\ w \end{bmatrix}, A 
  \begin{bmatrix} \psi \\ w \end{bmatrix} \right\rangle 
  \geq \alpha \left( \| \psi \|_{H^{-\frac12, -\frac14} (\Sigma_T)}^2 
  + \| w \|_{H^{\frac12, \frac14} (\Sigma_T)}^2 \right)
\]
for all $[\psi, w]^{\intercal} \in \Hcal '$. 
\end{theorem}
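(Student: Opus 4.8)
The plan is to mimic the proof of \cite[Theorem 3.11]{Costabel1990}; the only genuinely new points are the bookkeeping of the velocity correction $\langle\Vbfm,\nbfm\rangle$ appearing in $\gamma_1^\pm$ and in $d(\cdot,\cdot)$, and the handling of the exterior tube $Q_T^c$. Given $[\psi,w]^{\intercal}\in\Hcal'$, I would set $u\isdef\widetilde{\Vcal}\psi-\widetilde{\Kcal}w$. By Lemmas \ref{lem.mapping_properties_single_layer_pot} and \ref{lem.mapping_properties_double_layer_pot} (together with their stated $Q_T^c$-versions) one has $u|_{Q_T}\in H^{1,\frac12}_{;0,}(Q_T;\partial_t-\Delta)$ and $u|_{Q_T^c}\in H^{1,\frac12}_{;0,}(Q_T^c;\partial_t-\Delta)$, with $(\partial_t-\Delta)u=0$ in both $Q_T$ and $Q_T^c$. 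Writing $\Phi\isdef\Vcal\psi-\Kcal w$ and $\Psi\isdef\Kcal'\psi+\Dcal w$ for the two components of $A[\psi,w]^{\intercal}$, Lemma \ref{lem.jump_relations_one_sided} supplies the four one-sided traces $\gamma_0(u|_{Q_T})=\Phi+\frac12 w$, $\gamma_0(u|_{Q_T^c})=\Phi-\frac12 w$, $\gamma_1^-(u|_{Q_T})=\Psi+\frac12\psi$, $\gamma_1^-(u|_{Q_T^c})=\Psi-\frac12\psi$; in particular $[\gamma_0u]=-w$ and $[\gamma_1^-u]=-\psi$.

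Next I would derive the energy identity. Apply Green's first formula in the rewritten form \eqref{eq.rewritten_green_formula} (Lemma \ref{lem.green_formula_for_and_backward}) with $v=u$ and $(\partial_t-\Delta)u=0$ on $Q_T$, and its analogue with the outward normal $-\nbfm$ of the exterior region on $Q_T^c$. By Reynolds' transport theorem the velocity boundary term of $d(\cdot,\cdot)$ combines with $\int_{\Omega_T}u(T,\cdot)^2$ into $\frac12\|u(T,\cdot)\|_{L^2}^2\ge0$, the vanishing initial data kill the term at $t=0$, and the contribution on $(-\infty,T)\times\partial B_R$ disappears upon letting $R\to\infty$ by the Gaussian decay of $G$. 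Inserting the traces above and adding the two formulas, all cross-terms collapse and one is left with
\[
\left\langle\begin{bmatrix}\psi\\w\end{bmatrix},A\begin{bmatrix}\psi\\w\end{bmatrix}\right\rangle=\langle\psi,\Phi\rangle+\langle\Psi,w\rangle=\int_{Q_T}|\nabla u|^2\,\drm(t,\xbfm)+\int_{Q_T^c}|\nabla u|^2\,\drm(t,\xbfm)+\frac12\|u(T,\cdot)\|_{L^2(\Rbbb^d)}^2,
\]
which already gives non-negativity of the left-hand side.

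For the coercive lower bound I would proceed as follows. Using $w=-[\gamma_0u]$ and $\psi=-[\gamma_1^-u]$ together with the Dirichlet trace lemma on the tube (Lemma \ref{lem.Dirichlet_trace_cont_and_surj}, cf.\ Section \ref{subsec.trace_op_non_cylindrical_domains}) and the Neumann trace bound of Lemma \ref{lem.mapping_properties_gamma_1_minus} (where $(\partial_t-\Delta)u=0$ turns the graph norm into the plain $H^{1,\frac12}$-norm), one obtains
\[
\|w\|_{H^{\frac12,\frac14}(\Sigma_T)}^2+\|\psi\|_{H^{-\frac12,-\frac14}(\Sigma_T)}^2\lesssim\|u\|_{H^{1,\frac12}(Q_T)}^2+\|u\|_{H^{1,\frac12}(Q_T^c)}^2.
\]
Hence it remains to dominate $\|u\|_{H^{1,\frac12}(Q_T)}^2+\|u\|_{H^{1,\frac12}(Q_T^c)}^2$ by the energy $\langle[\psi,w]^{\intercal},A[\psi,w]^{\intercal}\rangle$. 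This I would handle exactly as \cite{Costabel1990} handles the boundedness of the layer potentials (cf.\ the proof of Lemma \ref{lem.mapping_properties_single_layer_pot}): pass to $\Rbbb^{d+1}$, write $u=\widetilde{G}\star\mu$ with $\mu=(\partial_t-\Delta)u$ a layer distribution on $\Sigma_T$, and compare, via Plancherel, the multiplier $|\xi|^2/(\tau^2+|\xi|^4)$ arising from $\langle(\partial_t-\Delta)u,u\rangle_{\Rbbb^{d+1}}=\|\nabla u\|_{L^2(\Rbbb^{d+1})}^2$ against the anisotropic symbol of $H^{1,\frac12}$; the velocity corrections only produce lower-order boundary integrals over $\Sigma_T$, and the $C^2$-diffeomorphism $\kappabfm$ preserves the relevant norms by \eqref{eq.uniformity_cond}. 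Combining this estimate with the energy identity yields the claim with some $\alpha>0$.

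The step I expect to be the main obstacle is precisely this last coercivity estimate: the continuity of $\widetilde{\Vcal}$ and $\widetilde{\Kcal}$ is already available, but the matching lower bound requires reproducing Costabel's Fourier-multiplier analysis, and here one must additionally check that neither the transfer through $\kappabfm$ nor the extra $\langle\Vbfm,\nbfm\rangle$-terms in $\gamma_1^\pm$ and in $d(\cdot,\cdot)$ spoil it. A secondary technical nuisance is the exterior tube $Q_T^c$, which is parametrized over $(-\infty,T)$ and spatially truncated at $\partial B_R$, so one must be careful when applying Green's formula there and when letting $R\to\infty$.
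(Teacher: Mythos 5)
Your setup coincides with the paper's: the potential $u\isdef\widetilde{\Vcal}\psi-\widetilde{\Kcal}w$, the jump relations, the rewriting of $\langle[\psi,w]^{\intercal},A[\psi,w]^{\intercal}\rangle$ through the one-sided traces, the two Green formulas whose $d(u,u)$-contributions cancel precisely because of the factor $\frac12$ in $\gamma_1^\pm$, and the trace estimates $\|w\|_{H^{\frac12,\frac14}(\Sigma_T)}+\|\psi\|_{H^{-\frac12,-\frac14}(\Sigma_T)}\lesssim\|u\|_{H^{1,\frac12}(Q_T)}+\|u\|_{H^{1,\frac12}(Q_{T,R}^c)}$ are all as in the paper. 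The genuine gap is exactly the step you flag as the main obstacle: you propose to dominate $\|u\|^2_{H^{1,\frac12}(Q_T)}+\|u\|^2_{H^{1,\frac12}(Q_T^c)}$ directly by the energy via a Plancherel comparison of the multiplier $|\xi|^2/(\tau^2+|\xi|^4)$ with the anisotropic symbol. No such direct bound holds: the energy identity only produces $\int\|\nabla u\|^2$, i.e.\ heuristically $\int|\xi|^2\,|\widehat\mu|^2/(\tau^2+|\xi|^4)$, whereas the $H^{1,\frac12}$-norm corresponds to $\int(1+|\xi|^2+|\tau|^{})\,|\widehat\mu|^2/(\tau^2+|\xi|^4)$, and the ratio $(1+|\xi|^2+|\tau|)/|\xi|^2$ degenerates as $\xi\to0$ (and for $|\tau|\gg|\xi|^2$). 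The missing contribution is of lower order ($L^2$-type), so the theorem cannot be obtained as a one-shot coercivity estimate; this is precisely why neither Costabel nor the present paper argues that way. A secondary inaccuracy: in the G\aa rding step one must keep $R$ finite, since letting $R\to\infty$ gives no operator inequality uniform in $(\psi,w)$; moreover the decay of $u$ and $\partial_r u$ on $\partial B_R$ for finite $T$ is polynomial, not Gaussian.

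The paper closes the argument with three ingredients absent from your proposal. First, the $\partial B_R$-boundary term and the $L^2$-term (from $\int\|\nabla u\|^2=\|u\|^2_{H^{1,0}}-\|u\|^2_{L^2}$) are absorbed into compact operators $T_1$ and $T_2$ on $\Hcal$, using that $u|_{(0,T)\times\partial B_R}$, $\partial_r u|_{(0,T)\times\partial B_R}$ are given by integral operators with smooth kernels and that $H^{1,\frac12}_{;0,}(Q_T)$ embeds compactly into $L^2(Q_T)$. Second, the passage from $H^{1,0}$ to $H^{1,\frac12}$ is done by Lemma \ref{lem.norm_equivalence_H_V}, the norm equivalence for solutions of the homogeneous heat equation — this is where $(\partial_t-\Delta)u=0$ is actually used, and on the tube it requires the extra argument with the transported operator $\Mcal$, not a Fourier computation. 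This yields only a G\aa rding inequality $\langle x,(A+T_1+T_2)x\rangle\geq\alpha\|x\|_{\Hcal'}^2$. Third, strict positivity $\langle Ax,x\rangle>0$ for $x\neq0$ is proven separately (here, and only here, the $R\to\infty$ decay argument enters, followed by the uniqueness step: $\nabla u\equiv0$ and $u(0,\cdot)=0$ force $u\equiv0$, whence $w=\psi=0$ by the jump relations), and the abstract Lemma \ref{lem.proof_positive_definite_help_lemma} combines the G\aa rding inequality with this positivity to produce the claimed coercivity constant. Without the compact-perturbation structure, the norm-equivalence lemma for caloric functions, and this final abstract lemma, your argument does not close.
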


For the proof, we again mimic the proof of \cite[Theorem 3.11]{Costabel1990},
which is based on the following lemma (see \cite[Lemma 3.12]{Costabel1990}). 
Its proof can be found in \cite{Costabel1990}.

\begin{lemma}\label{lem.proof_positive_definite_help_lemma}
Let $A \colon X \to X'$ be a bounded linear operator, 
where $X'$ is the dual space of the Hilbert space $X$. 
With a compact operator $T \colon X \to X'$ and a constant $\alpha$, let $A$ satisfy
\[
  \big\langle (A+T) x, x \big\rangle \geq \alpha \|x \|_X^2 \quad \text{for all } x \in X
\]
and 
\begin{equation}\label{eq.def_of_duality_product_on_product_space}
  \langle A x, x \rangle > 0 \quad \text{for all } x \in X \backslash \lbrace 0 \rbrace.
\end{equation}
Then, there exists a constant $\alpha_1 > 0$ such that
\[ 
  \langle A x, x \rangle \geq \alpha_1 \| x \|_X^2 \quad \text{for all } x \in X.
\]
\end{lemma}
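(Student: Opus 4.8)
The plan is to run the classical compactness argument by which a G{\aa}rding-type inequality together with strict positivity yields coercivity. Throughout I read $x\mapsto\langle Ax,x\rangle$ as a real-valued quadratic form, as in the setting of Theorem~\ref{thm.positive_definiteness_calderon}, and I assume the G{\aa}rding constant to be positive, $\alpha>0$ (for $\alpha\le0$ the asserted conclusion is false in general).

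I would argue by contradiction. If no constant $\alpha_1>0$ with $\langle Ax,x\rangle\ge\alpha_1\|x\|_X^2$ for all $x\in X$ existed, then, using \eqref{eq.def_of_duality_product_on_product_space} to guarantee $\langle Ax,x\rangle>0$ whenever $\|x\|_X=1$, one could pick for every $n\in\mathbb{N}$ a unit vector $x_n\in X$ with $0<\langle Ax_n,x_n\rangle<1/n$, so that $\langle Ax_n,x_n\rangle\to0$. Since $X$ is a Hilbert space, the bounded sequence $(x_n)$ has a subsequence, not relabelled, with $x_n\rightharpoonup x$ weakly in $X$; and since $T$ is compact, $x_n-x\rightharpoonup0$ implies $T(x_n-x)\to0$ in the norm of $X'$.

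Testing the G{\aa}rding inequality on $x_n-x$ gives
\[
 \alpha\,\|x_n-x\|_X^2\le\langle A(x_n-x),\,x_n-x\rangle+\langle T(x_n-x),\,x_n-x\rangle ,
\]
and the last term is bounded by $\|T(x_n-x)\|_{X'}\|x_n-x\|_X\to0$. Writing the first term as
\[
 \langle A(x_n-x),\,x_n-x\rangle=\langle Ax_n,x_n\rangle-\langle Ax_n,x\rangle-\langle Ax,x_n\rangle+\langle Ax,x\rangle ,
\]
I would let $n\to\infty$ termwise: $\langle Ax_n,x_n\rangle\to0$ by construction, while $\langle Ax,x_n\rangle\to\langle Ax,x\rangle$ and $\langle Ax_n,x\rangle\to\langle Ax,x\rangle$, both because they are the values of fixed elements of $X'$ — namely $Ax$ and the functional $u\mapsto\langle Au,x\rangle$, which is bounded since $|\langle Au,x\rangle|\le\|A\|\,\|u\|_X\|x\|_X$ — on the weakly convergent sequence $(x_n)$. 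Hence $\langle A(x_n-x),x_n-x\rangle\to-\langle Ax,x\rangle$, and passing to the $\limsup$ in the displayed inequality gives $\alpha\,\limsup_{n}\|x_n-x\|_X^2\le-\langle Ax,x\rangle$.

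This already produces the contradiction. If $x\ne0$, then $\langle Ax,x\rangle>0$ by \eqref{eq.def_of_duality_product_on_product_space}, so the right-hand side is strictly negative, while the left-hand side is nonnegative. If $x=0$, then $\|x_n-x\|_X=\|x_n\|_X=1$ and the inequality becomes $\alpha\le0$, contradicting $\alpha>0$. In either case we have a contradiction, so a constant $\alpha_1>0$ as claimed must exist. The only points needing a little care are the extraction of the weak limit (where the reflexivity of the Hilbert space $X$ enters) and the observation that $u\mapsto\langle Au,x\rangle$ defines a bounded functional, hence is weakly sequentially continuous; the rest is routine manipulation of the bilinear form together with the compactness of $T$.
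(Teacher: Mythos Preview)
Your proof is correct. The paper does not supply its own proof of this lemma; it merely cites \cite{Costabel1990}, so there is nothing to compare against beyond noting that your argument is the standard compactness proof one expects here: negate the conclusion, extract a weakly convergent subsequence of unit vectors with $\langle Ax_n,x_n\rangle\to 0$, use compactness of $T$ to kill the perturbation term, and derive a contradiction from the strict positivity hypothesis. Your handling of the cross terms $\langle Ax_n,x\rangle$ and $\langle Ax,x_n\rangle$ via weak continuity of bounded functionals is clean, and applying the G{\aa}rding estimate to $x_n-x$ rather than to $x_n$ directly is a nice touch because it makes the case split $x=0$ versus $x\neq 0$ transparent.

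Your remark that one must read the hypothesis with $\alpha>0$ is well taken: the statement as printed leaves the sign of $\alpha$ unspecified, but the conclusion can fail for $\alpha\le 0$, and in the application (Theorem~\ref{thm.positive_definiteness_calderon}) the G{\aa}rding constant is indeed positive.
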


Moreover, we need the following analogue of \cite[Lemma 2.15]{Costabel1990}.

\begin{lemma}\label{lem.norm_equivalence_H_V}
Let $u \in \Vcal (Q_T)$ such that $(\partial_t - \Delta) u = 0$ in $Q_T$. 
Then, there exist constants $m_1$, $m_2$, and $m_3$ such that
\[ 
  \| u \|_{H^{1, 0} (Q_T)} \leq m_1 \| u\|_{H^{1, \frac12} (Q_T)} 
  \leq m_2 \| u \|_{\Vcal (Q_T)} \leq m_3 \| u \|_{H^{1, 0}(Q_T)}.
\]
In other words, for functions $u \in \Vcal (Q_T)$ satisfying the homogeneous 
heat equation, we have the equivalence of the norms in $\Vcal (Q_T)$,
$H^{1, 0} (Q_T)$, and $H^{1,\frac12} (Q_T)$.
\end{lemma}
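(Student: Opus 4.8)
The plan is to reduce the statement to the corresponding assertion on the reference cylinder, \cite[Lemma~2.15]{Costabel1990}, via the diffeomorphism $\kappabfm$. Writing $v\isdef u\circ\kappabfm$, the three quantities in the claim are by definition the $H^{1,0}(Q_0)$-, $H^{1,\frac12}(Q_0)$- and $\Vcal(Q_0)$-norms of $v$, so it suffices to prove the three-fold inequality for $v$ on $Q_0$. I would point out at the outset that the first two inequalities hold for \emph{every} $v\in\Vcal(Q_0)$ and that only the last one uses the heat equation.

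The inequality $\|v\|_{H^{1,0}(Q_0)}\le m_1\|v\|_{H^{1,\frac12}(Q_0)}$ holds with $m_1=1$, since the norm of $H^{1,\frac12}(Q_0)=L^2\big((0,T);H^1(\Omega_0)\big)\cap H^{\frac12}\big((0,T);L^2(\Omega_0)\big)$ dominates its $L^2\big((0,T);H^1(\Omega_0)\big)$-part, which is the $H^{1,0}(Q_0)$-norm. For the middle inequality I would use the interpolation result \eqref{eq.interpol_result} with $X=H^1(\Omega_0)$, $Y=H^{-1}(\Omega_0)$ and $[X,Y]_{\frac12}=L^2(\Omega_0)$; as already recorded in Section~\ref{subsec.anisotropic_Sobolev}, this is exactly the continuous inclusion $\Vcal(Q_0)\hookrightarrow H^{\frac12}\big((0,T);L^2(\Omega_0)\big)$, which combined with the trivial estimate $\|v\|_{L^2((0,T);H^1(\Omega_0))}\le\|v\|_{\Vcal(Q_0)}$ gives $\|v\|_{H^{1,\frac12}(Q_0)}\le m_2\|v\|_{\Vcal(Q_0)}$.

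The only step requiring real work is $\|v\|_{\Vcal(Q_0)}\le m_3\|v\|_{H^{1,0}(Q_0)}$, where the differential equation enters. Using \eqref{eq.id_stefan2} for the time derivative together with the transformation rule for the Laplacian, the hypothesis $(\partial_t-\Delta)u=0$ in $Q_T$ translates into an identity $\partial_t v=\on{div}\big(\Abfm\nabla v\big)+\langle\bbfm,\nabla v\rangle$ in $Q_0$ (the principal part being put into divergence form), where the coefficients $\Abfm$ and $\bbfm$ are algebraic expressions in $\D\kappabfm$, $\D\kappabfm^{-1}$, their first spatial derivatives, and $\partial_t\kappabfm$. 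By the uniformity condition \eqref{eq.uniformity_cond} these coefficients are bounded, uniformly in $t$, by a constant depending only on $C_{\kappabfm}$, so the right-hand side defines a bounded operator $H^1(\Omega_0)\to H^{-1}(\Omega_0)$ uniformly in $t$. Integrating in $t$ yields $\|\partial_t v\|_{H^{-1,0}(Q_0)}\le C\|v\|_{H^{1,0}(Q_0)}$, and adding $\|v\|_{H^{1,0}(Q_0)}^2$ finishes the estimate with $m_3=\sqrt{1+C^2}$.

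I expect this last step to be the only obstacle. In the cylindrical case one only needs that $\Delta\colon H^1(\Omega_0)\to H^{-1}(\Omega_0)$ is bounded; on the tube the transformed principal part involves second spatial derivatives of $\kappabfm^{-1}$, and the point is that it can still be rewritten in divergence form — which is legitimate precisely because $\kappabfm,\kappabfm^{-1}\in C^2$ forces the entries of $\Abfm$ into $W^{1,\infty}$ — so as to be an operator into $H^{-1}(\Omega_0)$. Once this structural observation is in place, the argument is a direct transcription of the proof of \cite[Lemma~2.15]{Costabel1990}.
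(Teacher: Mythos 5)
Your proposal is correct and follows essentially the same route as the paper: the first two inequalities are obtained on the reference cylinder from the definition of the norms and the interpolation result \eqref{eq.interpol_result}, and the third by transporting the heat equation to $Q_0$, where it reads $\partial_t(u\circ\kappabfm)=\Mcal(u\circ\kappabfm)$ with a divergence-form principal part plus first-order terms whose coefficients are bounded thanks to \eqref{eq.uniformity_cond}, so that $\|\partial_t(u\circ\kappabfm)\|_{H^{-1,0}(Q_0)}\lesssim\|u\circ\kappabfm\|_{H^{1,0}(Q_0)}$. Your structural remark that the $C^2$-regularity of $\kappabfm$ is what permits the divergence-form rewriting (and hence the $H^1(\Omega_0)\to H^{-1}(\Omega_0)$ boundedness uniformly in $t$) is exactly the point the paper's operator $\Mcal$ encodes.
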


\begin{proof}
The first and second inequality follow directly from the equivalence 
of norms on $Q_T$ and $Q_0$, since the proof of \cite{Costabel1990} 
is based on the definition of the norm for the first inequality and the 
interpolation result \eqref{eq.interpol_result} for the second inequality.
Nonetheless, we cannot apply the equivalence of norms on $Q_T$ 
and norms on $Q_0$ directly for the third inequality, because we have 
$(\partial_t - \Delta)u = 0$ as an assumption, which is needed to show 
the third inequality. Mapping this differential operator from the tube onto 
the cylinder oder vice versa will alter it. Therefore, we use the ideas of 
the proof of  \cite[Lemma 2.15]{Costabel1990}, but adapt them to our 
context.

Transforming the partial differential equation $(\partial_t - \Delta) u = 0$ 
from $Q_T$ back to $Q_0$ via the weak formulation (see \cite{Bruegger2020}) 
yields	
\[
   \partial_t (u \circ \kappabfm) -\Mcal (u \circ \kappabfm)  = 0\ \ \text{in } Q_0,
\]
where $\Mcal$ is defined as
\begin{align*}
  \Mcal (u \circ \kappabfm) \isdef 	
  &\on{div} \Big( (\D\kappabfm)^{-1} (\D\kappabfm)^{-\intercal} \nabla (u \circ \kappabfm) \Big) 
  + (\D\kappabfm)^{-\intercal} \nabla (u \circ \kappabfm) \cdot \partial_t \kappabfm\\
  &\hspace*{2cm} + \frac{1}{\det (\D\kappabfm)} \nabla \big(\det (\D\kappabfm) \big) 
  \cdot (\D\kappabfm)^{-1} (\D\kappabfm)^{- \intercal} \nabla (u \circ \kappabfm).
\end{align*}
By the standard theory, for fixed $t \in (0,T)$, we have that $\Mcal \colon H^1 (\Omega_0)
\to H^{-1}(\Omega_0)$ is bounded. Thus, for $u \in H^{1,0} (Q_0)$, we 
obtain $\Mcal u\in H^{-1, 0}(Q_0)\isdef [H^{1,0} (Q_0)]'$ and we conclude
\begin{align*}
  \| u \|_{\Vcal(Q_T)}^2 = \| u \circ \kappabfm \|_{\Vcal(Q_0)}^2
  &=  \| u\circ \kappabfm  \|_{H^{1,0}(Q_0)}^2 
  + \big\| \partial_t (u\circ \kappabfm) \big\|_{H^{-1,0}(Q_0)}^2 \\
  & = \| u \circ \kappabfm \|_{H^{1,0}(Q_0)}^2 
  + \big\| \Mcal (u\circ \kappabfm) \big\|_{H^{-1,0}(Q_0)}^2\\
  &\lesssim \| u \circ \kappabfm \|_{H^{1,0}(Q_0)}^2 \\
  &= \| u \|_{H^{1,0}(Q_T)}^2.
\end{align*}
\end{proof}

\begin{proof}[Proof of Theorem \ref{thm.positive_definiteness_calderon}]
We follow the proof of \cite[Theorem 3.11]{Costabel1990}. As 
above, we let the radius $R >0$ be big enough such that the ball 
$B_R$ contains the boundary $\Gamma_t$ for all $t$. We then 
write $\Omega_{t,R}^c = B_R \backslash \overline{\Omega}_t$ and 
$Q_{T,R}^c = \bigcup_{0< t< T}(\lbrace t \rbrace \times \Omega_{t,R}^c)$. 

Let $w \in H^{\frac12, \frac14} (\Sigma_T)$ and 
$\psi \in H^{-\frac12, -\frac14} (\Sigma_T)$. Apart from 
the boundary $\Sigma_T$, we define
\begin{equation}\label{eq.proof_positive_definite_def_u}
  u \isdef \widetilde{\Vcal} \psi - \widetilde{\Kcal} w. 
\end{equation}
From the jump relations (see Lemma \ref{lem.jump_relations}), we obtain
\begin{equation}\label{eq.proof_positive_definite_jump_rel}
  [\gamma_0 u] = - w, \quad [\gamma_1^- u] = - \psi.
\end{equation}
Using Definition \ref{def.layer_operators} immediately yields
\begin{equation}\label{eq.proof_positive_definite_rewriteA}
  \frac12 \left( \begin{bmatrix}
  \gamma_0 u|_{Q_T} \\ \gamma_1^- u|_{Q_T} 
  \end{bmatrix} + \begin{bmatrix}
  \gamma_0 u|_{Q_{T,R}^c} \\
  \gamma_1^- u|_{Q_{T,R}^c}
  \end{bmatrix} \right) = A \begin{bmatrix}
  \psi \\ w
  \end{bmatrix}.
\end{equation}
In view of \eqref{eq.proof_positive_definite_jump_rel} and 
\eqref{eq.proof_positive_definite_rewriteA}, we can rewrite 
the bilinear form as
\begin{equation}\label{eq.proof_positive_definite_rewrite_bilinear}
  \begin{aligned}
  \left\langle \begin{bmatrix}
  \psi \\w
  \end{bmatrix}, A \begin{bmatrix}
  \psi \\ w
  \end{bmatrix} \right\rangle & = \frac12 \left\langle \begin{bmatrix}
  \gamma_1^- u|_{Q_T} \\ \gamma_0 u|_{Q_T}
  \end{bmatrix} - \begin{bmatrix}
  \gamma_1^- u|_{Q_{T,R}^c} \\ \gamma_0 u|_{Q_{T,R}^c}
  \end{bmatrix}, \begin{bmatrix}
  \gamma_0 u|_{Q_T} \\ \gamma_1^- u|_{Q_T}
  \end{bmatrix} + \begin{bmatrix}
  \gamma_0 u|_{Q_{T,R}^c} \\ \gamma_1^- u_{Q_{T,R}^c}
  \end{bmatrix}\right\rangle \\
  & = \langle \gamma_1^- u|_{Q_T} , \gamma_0 u|_{Q_T} \rangle 
  - \langle \gamma_1^-u|_{Q_{T,R}^c}, \gamma_0 u|_{Q_{T,R}^c} \rangle,
\end{aligned}
\end{equation}
where we used \eqref{eq.def_of_duality_product_on_product_space}.

Since $u$ satisfies $(\partial_t - \Delta) u = 0$ in $Q_T$ and also in 
$Q_{T,R}^c$, we apply Green's first formula \eqref{eq.first_green_formula} 
(see Lemma \ref{lem.green_formula_tube}) to obtain	
\[
  \int_{Q_T} \| \nabla u \|^2 \, \drm (\xbfm, t) + d(u,u) 
  = \langle \gamma_1^- u |_{Q_T}, \gamma_0 u |_{Q_T} \rangle,
\]
while applying \eqref{eq.rewritten_green_formula} (see 
Lemma \ref{lem.green_formula_for_and_backward}) yields
\[
  \int_{Q_T} \| \nabla u \|^2 \, \drm(t,\xbfm) - d(u,u)  + \int_{\Omega_T} |u(T, \xbfm) |^2 \, \drm \xbfm 
  = \langle \gamma_1^- u |_{Q_T}, \gamma_0 u|_{Q_T} \rangle.
\]
Adding the two expressions together gives\footnote{At this point, it is crucial that we 
split the term $\langle \Vbfm, \nbfm \rangle$ in \eqref{eq.normaltrace} with the factor 
$\frac12$. If we choose the factor differently, say $\lambda$ and $1-\lambda$, we would 
obtain a boundary term involving $\int_0^T \int_{\Gamma_t} \langle \Vbfm, \nbfm \rangle 
(\gamma_0^{\on{int}} u)^2 \, \drm \sigma \drm t$, which would require an appropriate, 
not straight-forward treatment.}
\begin{equation}\label{eq.proof_positive_definite_traces_inside}
  \begin{aligned}
  \langle \gamma_1^- u|_{Q_T}, \gamma_0 u|_{Q_T} \rangle 
  &= \int_{Q_T} \| \nabla u \|^2 \, \drm (t,\xbfm) 
  + \frac12 \int_{\Omega_T} | u |^2 \, \drm \xbfm \\
  &\geq \int_{Q_T} \| \nabla u \|^2 \, \drm (t,\xbfm) .
  \end{aligned}
\end{equation}

On $Q_{T,R}^c$, we obtain analogously
\begin{equation}\label{eq.proof_positive_definite_traces_outside}
  \begin{aligned}
  - \langle \gamma_1^- u|_{Q_{T,R}^c}, \gamma_0 u |_{Q_{T,R}^c} \rangle 
  & = \int_{Q_{T,R}^c} \| \nabla u \|^2 \, \drm (\xbfm, t) 
\\
  & +\frac12 \int_{\Omega_{T,R}^c} | u(T,\xbfm) |^2 \, \drm \xbfm 
  - \int_0^T\int_{\partial B_R} u \partial_r u \, \drm \sigma \drm t,
  \end{aligned}
\end{equation}
where $\partial_r u$ denotes the normal derivative of $u$ at the boundary 
$(0,T)\times \partial B_R$. Inserting \eqref{eq.proof_positive_definite_traces_inside} 
and \eqref{eq.proof_positive_definite_traces_outside} into 
\eqref{eq.proof_positive_definite_rewrite_bilinear} yields
\begin{equation}\label{eq.proof_positive_definite_A_with_boundary_terms}
\begin{aligned}
  \left\langle \begin{bmatrix}
  \psi \\w
  \end{bmatrix}, A \begin{bmatrix}
  \psi \\ w
  \end{bmatrix} \right\rangle & \geq \int_{Q_T \cup Q_{T,R}^c} \| \nabla u \|^2 \, \drm (\xbfm, t) 
  - \int_0^T\int_{\partial B_R} u \partial_r u \, \drm \sigma \drm t.
  \end{aligned}
\end{equation}

According to \eqref{eq.proof_positive_definite_def_u}, $u|_{(0,T) \times \partial B_R}$ 
and $\partial_r u|_{(0,T) \times \partial B_R}$ are defined from $[w, \psi]^{\intercal}$ by 
the action of integral operators with smooth kernels. These integral operators as well 
as their adjoints are compact and therefore, using Young's inequality, there 
exists a compact operator $T_1 : \Hcal \to \Hcal$ such that
\[
  \left| \int_0^T\int_{\partial B_R} u \partial_r u \, \drm \sigma \drm t \right| \leq \left\langle \begin{bmatrix}
  \psi \\ w
  \end{bmatrix} , T_1 \begin{bmatrix}
  \psi \\ w
  \end{bmatrix} \right\rangle.
\]

According to \cite{Costabel1990}, $H^{1, \frac12}_{;0,} (Q_0)$ embeds 
compactly into $L^2 (Q_0)$ and, therefore, also $H^{1, \frac12}_{;0,} (Q_T)$ 
embeds compactly into $L^2 (Q_T)$ due to the smooth mapping $\kappabfm$.
Then, in view of the mapping properties of $\widetilde{\Vcal}$ and $\widetilde{\Kcal}$ 
(see Lemma \ref{lem.mapping_properties_single_layer_pot} and Lemma 
\ref{lem.mapping_properties_double_layer_pot}), we obtain the existence of 
another compact operator $T_2 \colon \Hcal \to \Hcal$, such that, due to 
the definition of the norm, we have
\begin{align*}
  \int_{Q_T \cup Q_{T,R}^c}\!\! \|\nabla u \|^2 \, \drm (t,\xbfm) 
  &= \left\| u|_{Q_T} \right\|^2_{H^{1,0}(Q_T)} + \big\| u|_{Q_{T,R}^c} \big\|^2_{H^{1,0} (Q_{T,R}^c)}  
  - \int_{Q_T \cup Q_{T,R}^c}\!\! | u |^2 \, \drm(t,\xbfm) \\
  &=  \left\| u|_{Q_T} \right\|^2_{H^{1,0}(Q_T)} + \big\| u|_{Q_{T,R}^c} \big\|^2_{H^{1,0} (Q_{T,R}^c)}   
  - \left\langle \begin{bmatrix} \psi \\ w \end{bmatrix}, T_2 \begin{bmatrix} \psi \\ w \end{bmatrix} \right\rangle.
\end{align*}
Due to Lemma \ref{lem.norm_equivalence_H_V}, the norms 
of $u|_{Q_T}$ in $H^{1,0}(Q_T)$ and $H^{1,\frac12}(Q_T)$ are 
equivalent, and likewise those of $u|_{Q_{T,R}^c}$. Hence, 
\eqref{eq.proof_positive_definite_A_with_boundary_terms} induces
\begin{equation}\label{eq.proof_positive_definite_operator_below}
\begin{aligned}
  &\left\langle \begin{bmatrix} \psi \\w \end{bmatrix}, 
  A \begin{bmatrix} \psi \\ w \end{bmatrix} \right\rangle 		 
  \geq  \alpha \left( \left\| u|_{Q_T}\right\|^2_{H^{1,\frac12}(Q_T)} 
  + \big\| u|_{Q_{T,R}^c}\big\|^2_{H^{1,\frac12} (Q_{T,R}^c )}  \right) \\
  &\hspace*{4cm} - \left\langle \begin{bmatrix} \psi \\ w \end{bmatrix}, 
  (T_1 + T_2 ) \begin{bmatrix} \psi \\ w \end{bmatrix} \right\rangle.
  \end{aligned}
\end{equation}

Using the jump relations \eqref{eq.proof_positive_definite_jump_rel} 
and then the trace lemmata (see Subsection \ref{subsec.trace_op_non_cylindrical_domains} 
and Lemma \ref{lem.mapping_properties_gamma_1_minus}) yields
\begin{align*}
  \| w \|_{H^{\frac12, \frac14} (\Sigma_T)} 
  &= \big\| \gamma_0 u|_{Q_T} - \gamma_0 u |_{Q_{T,R}^c}\big\|_{H^{\frac12, \frac14} (\Sigma_T)} \\
  & \lesssim \left(\| u |_{Q_T}\|_{H^{1, \frac12}(Q_T)} 
  + \big\| u|_{Q_{T,R}^c}\big\|_{H^{1, \frac12} (Q_{T,R}^c)} \right)
\end{align*}
and similarly
\begin{align*}
  \| \psi \|_{H^{-\frac12, -\frac14} (\Sigma_T)} 
  &= \big\| \gamma_1^- u|_{Q_T} - \gamma_1^- u |_{Q_{T,R}^c}\big\|_{H^{-\frac12, -\frac14} (\Sigma_T)} \\
  & \lesssim \left(\big\| u |_{Q_T}\big\|_{H^{1, \frac12}(Q_T)} 
  + \big\| u|_{Q_{T,R}^c}\big\|_{H^{1, \frac12} (Q_{T,R}^c)} \right).
\end{align*}
Looking at \eqref{eq.proof_positive_definite_operator_below} 
we thus have the existence of a constant $\alpha > 0$ with
\begin{align*}
  \left\langle \begin{bmatrix} \psi \\w \end{bmatrix}, 
  (A + T_1 + T_2) \begin{bmatrix} \psi \\ w \end{bmatrix} \right\rangle
  \geq \alpha \left( \| \psi \|^2_{H^{-\frac12, -\frac14} (\Sigma_T)} 
  + \| w \|^2_{H^{\frac12, \frac14} (\Sigma_T)} \right),
\end{align*}
which is the first assumption in Lemma \ref{lem.proof_positive_definite_help_lemma}.
	
It remains to prove the positivity assumption in Lemma 
\ref{lem.proof_positive_definite_help_lemma}. To that end, we show 
that the term $\int_0^T\int_{\partial B_R} u \partial_r u \, \drm \sigma \drm t$ 
in \eqref{eq.proof_positive_definite_A_with_boundary_terms} goes to zero as 
$R \to \infty$. Let $0 < R_0 < R$ such that $\overline{\Omega}_t \subset B_{R_0}$ 
for all $t$ and set $Q_{T, R_0}^c = (0,T) \times (B_{R_0} \backslash \overline{\Omega}_0)$. 
We can use Green's second formula from Lemma \ref{lem.green_formula_tube} 
for $v = \widetilde{G} (T-t,  \xbfm)$ with $(t_0, \xbfm_0) \notin Q_{T, R_0}^c$. 
Thus, outside of $Q_{T, R_0}^c$ for $\| \xbfm \| > R_0$, the function $u$ 
coincides with 
\[ 
u_0 \isdef \widetilde{\Vcal} \psi_0 - \widetilde{\Kcal} w_0,
\]
where the potentials take the densities on $\Sigma_{R_0} 
\isdef (0,T) \times \partial B_{R_0}$ given by
\[ 
w_0 \isdef u|_{\Sigma_{R_0}}, \quad \psi_0 \isdef \partial_r u |_{\Sigma_{R_0}}.
\]

Since the singularity of $u$ lies on the boundary $\Sigma_T$, the 
densities $w_0$ and $\psi_0$ are smooth and also the boundary 
$\Sigma_{R_0}$ is smooth. Therefore, we can estimate
$u|_{\Sigma_R} = u_0 |_{\Sigma_R}$ and $\partial_r u|_{\Sigma_R} 
= \partial_r u_0 |_{\Sigma_R}$ for $R > R_0$ by looking at the 
behaviour of the fundamental solution $G$. Because the fundamental 
solution is the same for the cylindrical and the non-cylindrical case, 
we estimate
\[ 
  | G(t,  \xbfm) | \leq C_{\mu} t^{-\mu} \| \xbfm \|^{2 \mu - d} 
  	\quad \text{for all } \mu \in \Rbbb
\]
and we obtain a similar estimate for $\nabla G$. Then, for 
finite $T$, we have
\[ 
u = \Ocal (R^{-d}), \quad \partial_r u = \Ocal (R^{-d-1}) 
\quad \text{as } \| \xbfm \| = R \to \infty.
\]
Therefore, since the integrand is of order $\Ocal (R^{-d-d-1})$ 
and the measure of the boundary $\partial B_R$ is of order 
$\Ocal (R^{d-1})$, we obtain
\[ 
\int_0^T\int_{\partial B_R} u \partial_r u \, \drm \sigma \drm t 
= \Ocal (R^{-d-2} ) \to 0 \quad\text{as } \| \xbfm \| = R \to \infty. 
\]

Since the left hand side in \eqref{eq.proof_positive_definite_A_with_boundary_terms} 
is independent of $R$, we can conclude that 
\[ 
\lim_{R \to \infty} \int_{Q_{T,R}^c} \| \nabla u \| ^2 \, \drm (t,\xbfm)
\]
is finite and
\[ 
  \left\langle \begin{bmatrix}\psi \\ w \end{bmatrix}, 
  A \begin{bmatrix} \psi \\ w \end{bmatrix} \right\rangle 
  \geq \int_{(0,T) \times ( \Rbbb^d \backslash \Gamma_t )} \| \nabla u \|^2 \, \drm (t,\xbfm). 
\]

Assume that the right hand side vanishes. Then, since $u$ is smooth 
enough, we obtain that $u(t, \cdot)$ is constant on $\Omega_t$ and 
$\Rbbb^d \backslash \overline{\Omega}_t$ for every $t \in (0,T)$. Since $u = 0$ 
on $\Omega_0$, we thus obtain that $u \equiv 0$ on $(0,T) \times \Rbbb^d$. 
From the jump relations \eqref{eq.proof_positive_definite_jump_rel}, we 
obtain $w = 0$ and $\psi = 0$. This implies the positivity assumption 
\eqref{eq.def_of_duality_product_on_product_space} of Lemma 
\ref{lem.proof_positive_definite_help_lemma} and the claim in 
the theorem follows immediately.
\end{proof}

Having the main result Theorem \ref{thm.positive_definiteness_calderon} 
at hand, we can state a few corollaries along the lines of \cite[Corollary 3.13, 
Corollary 3.14, Remark 3.15, Corollary 3.16, Corollary 3.17]{Costabel1990}.

\begin{corollary}
The single layer operator
\[ 
  \Vcal \colon H^{-\frac12, -\frac14} (\Sigma_T) \to H^{\frac12, \frac14} (\Sigma_T) 
\]
is an isomorphism and there exists $\alpha > 0$ such that
\begin{equation}\label{eq.ellipticity_single_layer_op}
  \langle \Vcal \psi, \psi \rangle  \geq \alpha \| \psi \|^2_{H^{-\frac12, -\frac14} (\Sigma_T)} 
  	\quad \text{for all } \psi \in H^{-\frac12, -\frac14} (\Sigma_T).
\end{equation}	
The hypersingular operator 
\[ 
  \Dcal \colon H^{\frac12, \frac14} (\Sigma_T) \to H^{-\frac12, -\frac14} (\Sigma_T)
\]
is an isomorphism and there exists $\alpha > 0$ such that
\begin{equation}\label{eq.ellipticity_hypersingular_op}
  \langle \Dcal w,  w \rangle \geq \alpha \| w \|^2_{H^{\frac12, \frac14} (\Sigma_T)} 
  	\quad \text{for all } w \in H^{\frac12, \frac14} (\Sigma_T).
\end{equation}
\end{corollary}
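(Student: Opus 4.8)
The plan is to read off both statements from the coercivity of the full operator $A$ established in Theorem~\ref{thm.positive_definiteness_calderon}, by restricting the quadratic form $[\psi,w]^{\intercal}\mapsto\big\langle[\psi,w]^{\intercal},A[\psi,w]^{\intercal}\big\rangle$ to the two coordinate subspaces of $\Hcal'$.

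First I would record the two ellipticity inequalities. Inserting $[\psi,0]^{\intercal}\in\Hcal'$ into the estimate of Theorem~\ref{thm.positive_definiteness_calderon} and unwinding the block structure $A[\psi,0]^{\intercal}=[\Vcal\psi,\Kcal'\psi]^{\intercal}$ together with the definition of the duality product on $\Hcal'\times\Hcal$, namely $\big\langle[\psi,w]^{\intercal},[v,\varphi]^{\intercal}\big\rangle=\langle\psi,v\rangle+\langle\varphi,w\rangle$, yields
\[
\langle\Vcal\psi,\psi\rangle=\Big\langle\begin{bmatrix}\psi\\0\end{bmatrix},A\begin{bmatrix}\psi\\0\end{bmatrix}\Big\rangle\ge\alpha\,\|\psi\|_{H^{-\frac12,-\frac14}(\Sigma_T)}^2,
\]
which is \eqref{eq.ellipticity_single_layer_op}. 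Symmetrically, inserting $[0,w]^{\intercal}$ and using $A[0,w]^{\intercal}=[-\Kcal w,\Dcal w]^{\intercal}$ gives $\langle\Dcal w,w\rangle\ge\alpha\,\|w\|_{H^{\frac12,\frac14}(\Sigma_T)}^2$, that is, \eqref{eq.ellipticity_hypersingular_op}.

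Second, I would upgrade coercivity to the isomorphism property. By the mapping properties established above (the theorem following Definition~\ref{def.layer_operators}), $\Vcal$ is a bounded operator from $H^{-\frac12,-\frac14}(\Sigma_T)$ into $H^{\frac12,\frac14}(\Sigma_T)$, and by the duality relations of Section~\ref{sec.anisotropic_sobolev_spaces} the latter space is precisely the dual of the former; likewise $\Dcal$ is bounded from $H^{\frac12,\frac14}(\Sigma_T)$ into its dual $H^{-\frac12,-\frac14}(\Sigma_T)$. A bounded and coercive --- though in general not symmetric --- bilinear form on a Hilbert space $X$ induces, by the Lax--Milgram (Lions projection) theorem, an isomorphism $X\to X'$. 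Applying this with $X=H^{-\frac12,-\frac14}(\Sigma_T)$ for $\Vcal$ and with $X=H^{\frac12,\frac14}(\Sigma_T)$ for $\Dcal$ proves that both operators are isomorphisms.

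I do not expect a genuine obstacle here, since the entire substance is contained in Theorem~\ref{thm.positive_definiteness_calderon}; the only points needing care are the bookkeeping of signs and of the order of the factors in the duality product when extracting the diagonal blocks $\Vcal$ and $\Dcal$ from $A$, and the observation that $\Vcal$ and $\Dcal$ are not self-adjoint --- the heat operator is not symmetric under time reversal --- so that one must invoke the Lax--Milgram theorem rather than the Riesz representation theorem to pass from coercivity to invertibility.
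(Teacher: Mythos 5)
Your proposal is correct and follows essentially the same route as the paper: the coercivity estimates are obtained from Theorem \ref{thm.positive_definiteness_calderon} by specializing to $w=0$ and $\psi=0$ (your block computation of $A[\psi,0]^{\intercal}$ and $A[0,w]^{\intercal}$ just makes this explicit), and invertibility then follows from continuity plus coercivity via the Lax--Milgram/Lions argument, exactly as the paper's appeal to the continuity of $\Vcal$ and $\Dcal$.
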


\begin{proof}
As in the proof of \cite[Corollary 3.13]{Costabel1990}, the coercivity estimates 
\eqref{eq.ellipticity_single_layer_op} and \eqref{eq.ellipticity_hypersingular_op} 
result from Theorem \ref{thm.positive_definiteness_calderon} by using the 
special cases $w = 0$ and $\psi = 0$, respectively. In view of the continuity 
of $\Vcal$ and $\Dcal$, this leads to the invertibility of the operators.
\end{proof}

\begin{corollary}
The operators
\begin{align*}
  \frac12 \on{id} + \Kcal, \ \ \frac12 \on{id} - \Kcal 
  	&\colon H^{\frac12, \frac14} (\Sigma_T) \to H^{\frac12, \frac14} (\Sigma_T), \\
  \frac12 \on{id} + \Kcal ', \ \ \frac12 \on{id} - \Kcal ' 
  	&\colon H^{-\frac12, -\frac14} (\Sigma_T) \to H^{-\frac12, -\frac14} (\Sigma_T)
\end{align*}
are isomorphisms.
\end{corollary}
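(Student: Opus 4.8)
The plan is to deduce the statement by elementary operator algebra from the identity $\Acal^2 = \frac14\on{id}$ on $\Hcal$ obtained in the proof of the preceding corollary, together with the fact that $\Vcal$ and $\Dcal$ are isomorphisms. Writing $\Acal^2 = \frac14\on{id}$ out entrywise, with $\Acal$ as in that corollary, the two diagonal entries yield
\[
  \Kcal^2 + \Vcal\Dcal = \frac14\on{id} \quad\text{on } H^{\frac12,\frac14}(\Sigma_T), \qquad (\Kcal')^2 + \Dcal\Vcal = \frac14\on{id} \quad\text{on } H^{-\frac12,-\frac14}(\Sigma_T),
\]
while the off-diagonal entries give the commutation relations $\Vcal\Kcal' = \Kcal\Vcal$ and $\Dcal\Kcal = \Kcal'\Dcal$, which will not be needed here. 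All operators in sight are bounded by the mapping properties established above.

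Since $\Kcal$ commutes with $\on{id}$, the first identity factorizes as
\[
  \Vcal\Dcal = \frac14\on{id} - \Kcal^2 = \left(\frac12\on{id} - \Kcal\right)\left(\frac12\on{id} + \Kcal\right) = \left(\frac12\on{id} + \Kcal\right)\left(\frac12\on{id} - \Kcal\right)
\]
on $H^{\frac12,\frac14}(\Sigma_T)$, and likewise $\Dcal\Vcal = \left(\frac12\on{id} - \Kcal'\right)\left(\frac12\on{id} + \Kcal'\right) = \left(\frac12\on{id} + \Kcal'\right)\left(\frac12\on{id} - \Kcal'\right)$ on $H^{-\frac12,-\frac14}(\Sigma_T)$. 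By the preceding corollary $\Vcal$ and $\Dcal$ are isomorphisms, hence so are the compositions $\Vcal\Dcal$ and $\Dcal\Vcal$. The proof is then completed by the elementary observation that if a bounded operator on a Hilbert space is an isomorphism and factors as the product of two \emph{commuting} bounded operators, then each factor is both injective and surjective --- one uses that the product taken in either order equals the isomorphism --- and hence an isomorphism by the bounded inverse theorem. Applying this to the two factorizations of $\Vcal\Dcal$ gives that $\frac12\on{id}+\Kcal$ and $\frac12\on{id}-\Kcal$ are isomorphisms of $H^{\frac12,\frac14}(\Sigma_T)$, and applying it to the two factorizations of $\Dcal\Vcal$ gives that $\frac12\on{id}+\Kcal'$ and $\frac12\on{id}-\Kcal'$ are isomorphisms of $H^{-\frac12,-\frac14}(\Sigma_T)$.

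I do not expect any real obstacle here: everything reduces to $\Acal^2 = \frac14\on{id}$ and the invertibility of $\Vcal$ and $\Dcal$, both already in hand, plus a one-line functional-analytic lemma. The only subtlety to keep in mind is that on the time-dependent boundary $\Kcal'$ and $\Dcal$ are defined through the velocity-corrected trace $\gamma_1^-$ rather than the plain normal derivative; but since the entire Calder\'on calculus on $Q_T$, in particular $\Ccal_{Q_T}^2 = \Ccal_{Q_T}$ and thus $\Acal^2 = \frac14\on{id}$, has already been established with these corrected traces, the cylindrical argument of \cite[Corollary 3.16, Corollary 3.17]{Costabel1990} carries over verbatim.
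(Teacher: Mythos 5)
Your proof is correct and follows essentially the same route as the paper: both derive from $\Acal^2 = \frac14\on{id}$ the factorizations $\big(\frac12\on{id}\pm\Kcal\big)\big(\frac12\on{id}\mp\Kcal\big) = \Vcal\Dcal$ and $\big(\frac12\on{id}\pm\Kcal'\big)\big(\frac12\on{id}\mp\Kcal'\big) = \Dcal\Vcal$ and then invoke the invertibility of $\Vcal$ and $\Dcal$ from the preceding corollary. The only difference is that you spell out the commuting-factors lemma needed to pass from ``the product is an isomorphism'' to ``each factor is an isomorphism,'' a step the paper (following Costabel) leaves implicit, which is a welcome clarification rather than a deviation.
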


\begin{proof}
We again follow the proof of \cite[Corollary 3.14]{Costabel1990} directly. 
From the projection property \eqref{eq.projection_property_calderon}, more 
specifically from \eqref{eq.projection_property_calderon_rewritten}, we obtain
\begin{align*}
  \left( \frac12 \on{id} + \Kcal \right) \left( \frac12 \on{id} - \Kcal \right) & = \Vcal \Dcal, \\
  \left( \frac12 \on{id} + \Kcal ' \right) \left( \frac12 \on{id} - \Kcal ' \right) & = \Wcal \Vcal.
\end{align*}
Since the right hand sides are isomorphisms, we immediately arrive 
at the claim.
\end{proof}

\begin{remark}
The other two relations gained from \eqref{eq.projection_property_calderon_rewritten} 
lead to
\[ 
  \Vcal^{-1} \Kcal \Vcal = \Kcal ' = \Dcal \Kcal \Dcal^{-1}. 
\]
\end{remark}

\begin{corollary}
The unique solution $u \in H^{1, \frac12}_{;0,} (Q_T)$ of the Dirichlet problem
\[
  \begin{aligned}
  (\partial_t - \Delta) u & = 0 \ \ && \text{in } Q_T, \\
  \gamma_0 u & = g \ \ && \text{on } \Sigma_T,
  \end{aligned}
\]
with $g \in H^{\frac12, \frac14}(\Sigma_T)$ can be represented 
\begin{enumerate}[(i)]
\item as $u = \widetilde{\Vcal} \psi - \widetilde{\Kcal} g$, where 
$\psi \in H^{-\frac12, -\frac14} (\Sigma_T)$ is the unique solution 
of the first kind integral equation
\[ 
\Vcal \psi = \left(\frac12 \on{id} + \Kcal \right) g. 
\]
\item as $u = \widetilde{\Vcal} \psi - \widetilde{\Kcal} g$, where 
$\psi \in H^{-\frac12, -\frac14} (\Sigma_T)$ is the unique solution 
of the second kind integral equation
\[ 
\left( \frac12 \on{id} - \Kcal ' \right) \psi = \Dcal g.
\]
\item as $u = \widetilde{\Vcal} \psi$, where $\psi \in 
H^{-\frac12, -\frac14}(\Sigma_T)$ is the unique solution of 
the first kind integral equation 
\[ 
\Vcal \psi = g.
\]
\item as $u = \widetilde{\Kcal} w$, where $w \in 
H^{\frac12, \frac14}(\Sigma_T)$ is the unique solution of the 
second kind integral equation
\[ 
\left( \frac12 \on{id} - \Kcal \right) w = - g. 
\]
\end{enumerate}
In (i) and (ii), it particularly holds $\psi = \gamma_1^- u$ on $\Sigma_T$. 
\end{corollary}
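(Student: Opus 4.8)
The plan is to mimic the corresponding proof in \cite{Costabel1990}, the ingredients being the one-sided trace identities of Lemma \ref{lem.jump_relations_one_sided}, the invertibility of $\Vcal$, $\Dcal$, $\frac12\on{id}\pm\Kcal$ and $\frac12\on{id}\pm\Kcal'$ established in the two preceding corollaries, and the unique solvability of the Dirichlet problem from Theorem \ref{thm.solution_operator_isomorphism}. The existence and uniqueness of $u\in H^{1,\frac12}_{;0,}(Q_T)$ with $\gamma_0u=g$ and $(\partial_t-\Delta)u=0$ in $Q_T$ is already provided by Theorem \ref{thm.solution_operator_isomorphism}; since then $\Lcal u=0\in L^2(Q_T)$, we moreover have $u\in H^{1,\frac12}_{;0,}(Q_T;\partial_t-\Delta)$, so that $\psi\isdef\gamma_1^-u\in H^{-\frac12,-\frac14}(\Sigma_T)$ is well defined by Lemma \ref{lem.mapping_properties_gamma_1_minus}. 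It remains to produce the four boundary integral representations.

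For (i) and (ii) I would start from the representation formula \eqref{eq.representation_formula}, which gives $u=\widetilde{\Vcal}\psi-\widetilde{\Kcal}g$ with $\psi=\gamma_1^-u$. Applying the interior Dirichlet trace and inserting $\gamma_0(\widetilde{\Vcal}\psi)|_{Q_T}=\Vcal\psi$ and $\gamma_0(\widetilde{\Kcal}g)|_{Q_T}=-\frac12 g+\Kcal g$ from Lemma \ref{lem.jump_relations_one_sided} yields $g=\Vcal\psi+\frac12 g-\Kcal g$, i.e.\ the first-kind equation $\Vcal\psi=(\frac12\on{id}+\Kcal)g$ of (i); applying instead the interior Neumann trace $\gamma_1^-$ and using $\gamma_1^-(\widetilde{\Vcal}\psi)|_{Q_T}=\frac12\psi+\Kcal'\psi$ and $\gamma_1^-(\widetilde{\Kcal}g)|_{Q_T}=-\Dcal g$ gives $\psi=\frac12\psi+\Kcal'\psi+\Dcal g$, i.e.\ the second-kind equation $(\frac12\on{id}-\Kcal')\psi=\Dcal g$ of (ii). Since $\Vcal$ and $\frac12\on{id}-\Kcal'$ are isomorphisms, each equation is uniquely solvable, and as $\gamma_1^-u$ solves both, the solution in each case coincides with $\gamma_1^-u$; this establishes (i), (ii), and the final assertion $\psi=\gamma_1^-u$.

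For (iii) and (iv) I would use the indirect ansatz. By invertibility of $\Vcal$ there is a unique $\psi$ with $\Vcal\psi=g$, and by invertibility of $\frac12\on{id}-\Kcal$ a unique $w$ with $(\frac12\on{id}-\Kcal)w=-g$. Setting $\tilde u\isdef\widetilde{\Vcal}\psi$ (resp.\ $\tilde u\isdef\widetilde{\Kcal}w$), Lemma \ref{lem.mapping_properties_single_layer_pot} (resp.\ Lemma \ref{lem.mapping_properties_double_layer_pot}) gives $\tilde u\in H^{1,\frac12}_{;0,}(Q_T;\partial_t-\Delta)$; since the density is supported on $\Sigma_T$ one has $(\partial_t-\Delta)\tilde u=\gamma_0'\psi=0$ (resp.\ $=(\gamma_1^+)'w=0$) in $Q_T$, as already exploited in the proof of Lemma \ref{lem.jump_relations}, together with the homogeneous initial condition. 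The interior trace formulas of Lemma \ref{lem.jump_relations_one_sided} give $\gamma_0\tilde u|_{Q_T}=\Vcal\psi=g$ in case (iii) and $\gamma_0\tilde u|_{Q_T}=-\frac12 w+\Kcal w=-(\frac12\on{id}-\Kcal)w=g$ in case (iv), so $\tilde u$ solves the Dirichlet problem and hence $\tilde u=u$ by the uniqueness part of Theorem \ref{thm.solution_operator_isomorphism}. The one point that needs a little care is the converse direction of (ii), namely that a solution $\psi$ of $(\frac12\on{id}-\Kcal')\psi=\Dcal g$ really reproduces $g$ under $\gamma_0$; I would not verify this by a direct computation but simply by the uniqueness argument above (the second-kind equation has a unique solution, which must be $\gamma_1^-u$), the alternative being to multiply by $\Vcal$, use $\Vcal\Kcal'=\Kcal\Vcal$ and $\Vcal\Dcal=(\frac12\on{id}+\Kcal)(\frac12\on{id}-\Kcal)$ (consequences of \eqref{eq.projection_property_calderon_rewritten}), and cancel the isomorphism $\frac12\on{id}-\Kcal$, as in \cite{Costabel1990}. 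All remaining steps are routine bookkeeping with the jump relations.
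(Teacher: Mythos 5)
Your proposal is correct and follows essentially the same route as the paper, which simply invokes the argument of Costabel's Corollary 3.16: the uniqueness/invertibility results from the preceding corollaries and Theorem \ref{thm.solution_operator_isomorphism}, combined with the one-sided trace identities of Lemma \ref{lem.jump_relations_one_sided}. Your write-up merely spells out the details that the paper leaves to the citation, and they check out.
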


\begin{proof}
We can again use directly the idea of the proof of 
\cite[Corollary 3.16]{Costabel1990}, which are the uniqueness 
results from above and the jump relations given in Lemma 
\ref{lem.jump_relations_one_sided}.
\end{proof}

\begin{corollary}\label{cor.ex_and_unique_Neumann}
The unique solution $u \in H^{1, \frac12}_{;0,}(Q_T)$ of the Neumann problem
\[
  \begin{aligned}
  (\partial_t - \Delta) u & = 0 \ \ && \text{in } Q_T, \\
  \gamma_1^- u & = h \ \ && \text{on } \Sigma_T,
  \end{aligned}
\]
with $h \in H^{-\frac12, -\frac14} (\Sigma_T)$ can be represented
\begin{enumerate}[(i)]
\item as $u = \widetilde{\Vcal} h - \widetilde{\Kcal} w$, where 
$w \in H^{\frac12, \frac14}(\Sigma_T)$ is the unique solution of 
the second kind integral equation
\[ 
\left( \frac12 \on{id} + \Kcal \right) w = \Vcal h.
\]
\item as $u = \widetilde{\Vcal} h - \widetilde{\Kcal} w$, where 
$w \in H^{\frac12, \frac14} (\Sigma_T)$ is the unique solution 
of the first kind integral equation
\[ 
\Dcal w = \left( \frac12 \on{id} - \Kcal' \right) h.
\]
\item as $u = \widetilde{\Vcal} \psi$, where $\psi \in 
H^{-\frac12, -\frac14} (\Sigma_T)$ is the unique solution 
of the second kind integral equation
\[ 
\left(\frac12 \on{id} + \Kcal ' \right) \psi = h.
\]
\item as $u = \widetilde{\Kcal} w$, where 
$w \in H^{\frac12, \frac14} (\Sigma_T)$ is the unique 
solution of the first kind integral equation
\[ 
\Dcal w = - h.
\]
\end{enumerate}
In (i) and (ii), we have that $w = \gamma_0 u$ on $\Sigma_T$. 
\end{corollary}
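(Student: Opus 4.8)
The plan is to follow the proof of \cite[Corollary 3.17]{Costabel1990}: uniqueness of the Neumann problem is extracted from the boundary integral equations \eqref{eq.boundary_integral_eq1}--\eqref{eq.boundary_integral_eq2} together with the invertibility of $\Dcal$, and each of the four representations is obtained by inserting the appropriate layer ansatz, reading off a boundary integral equation from the one-sided jump relations of Lemma \ref{lem.jump_relations_one_sided}, and invoking the invertibility results proved in the corollaries to Theorem \ref{thm.positive_definiteness_calderon}. For uniqueness, let $u\in H^{1,\frac12}_{;0,}(Q_T)$ satisfy $(\partial_t-\Delta)u=0$ in $Q_T$ and $\gamma_1^- u=0$ on $\Sigma_T$. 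Since $u\in H^{1,\frac12}(Q_T)$ with $(\partial_t-\Delta)u=0$, formula \eqref{eq.boundary_integral_eq2} applies and, with $\gamma_1^- u=0$, reduces to $\Dcal\gamma_0 u=0$, hence $\gamma_0 u=0$ because $\Dcal$ is an isomorphism; the representation formula \eqref{eq.representation_formula} then forces $u=\widetilde{\Vcal}\gamma_1^- u-\widetilde{\Kcal}\gamma_0 u=0$. Thus the Neumann problem has at most one solution in $H^{1,\frac12}_{;0,}(Q_T)$.

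For existence and the representations (iii) and (iv) I would argue by direct ansätze. For (iii), set $u\isdef\widetilde{\Vcal}\psi$; by Lemma \ref{lem.mapping_properties_single_layer_pot} we have $u\in H^{1,\frac12}_{;0,}(Q_T;\partial_t-\Delta)$ with $(\partial_t-\Delta)u=0$ in $Q_T$, and by Lemma \ref{lem.jump_relations_one_sided} the condition $\gamma_1^- u=h$ becomes $\big(\frac12\on{id}+\Kcal'\big)\psi=h$, which has a unique solution $\psi\in H^{-\frac12,-\frac14}(\Sigma_T)$ because $\frac12\on{id}+\Kcal'$ is an isomorphism; the resulting $u$ solves the Neumann problem, so together with the uniqueness step this also establishes existence and (iii). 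For (iv), set $u\isdef\widetilde{\Kcal}w$; Lemma \ref{lem.mapping_properties_double_layer_pot} supplies the mapping property and Lemma \ref{lem.jump_relations_one_sided} turns $\gamma_1^- u=h$ into $\Dcal w=-h$, which is uniquely solvable since $\Dcal$ is an isomorphism.

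For (i) and (ii) I would start from the unique solution $u$ constructed above and set $w\isdef\gamma_0 u\in H^{\frac12,\frac14}(\Sigma_T)$. Since $u\in H^{1,\frac12}(Q_T)$ with $(\partial_t-\Delta)u=0$, the representation formula \eqref{eq.representation_formula} gives $u=\widetilde{\Vcal}\gamma_1^- u-\widetilde{\Kcal}\gamma_0 u=\widetilde{\Vcal}h-\widetilde{\Kcal}w$. Taking the interior Dirichlet trace and using Lemma \ref{lem.jump_relations_one_sided} yields $w=\Vcal h-\big(-\frac12 w+\Kcal w\big)$, i.e.\ $\big(\frac12\on{id}+\Kcal\big)w=\Vcal h$; taking instead the interior trace $\gamma_1^-$ yields $h=\big(\frac12 h+\Kcal'h\big)+\Dcal w$, i.e.\ $\Dcal w=\big(\frac12\on{id}-\Kcal'\big)h$. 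In each case the operator acting on $w$ (namely $\frac12\on{id}+\Kcal$, respectively $\Dcal$) is an isomorphism, so $w$ is the unique solution of the stated integral equation, which proves (i), (ii), and the concluding assertion $w=\gamma_0 u$.

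I do not expect a genuinely hard step: the analytic content already resides in Theorem \ref{thm.positive_definiteness_calderon} and its corollaries, in the jump relations of Lemmata \ref{lem.jump_relations}--\ref{lem.jump_relations_one_sided}, and in the mapping properties of the potentials. The only delicate point is the bookkeeping --- keeping the interior/exterior limits and the signs of the velocity-corrected Neumann traces straight, and noting that the layer ansätze automatically produce functions in $H^{1,\frac12}_{;0,}(Q_T)$, so that the homogeneous initial condition of the Neumann problem is built in via Lemmata \ref{lem.mapping_properties_single_layer_pot} and \ref{lem.mapping_properties_double_layer_pot}.
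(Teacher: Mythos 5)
Your proposal is correct and takes essentially the same route as the paper, whose proof simply defers to Costabel's Corollary 3.17: uniqueness combined with the one-sided jump relations of Lemma \ref{lem.jump_relations_one_sided}, the representation formula \eqref{eq.representation_formula}, and the isomorphism properties of $\Vcal$, $\Dcal$, $\frac12\on{id}\pm\Kcal$, $\frac12\on{id}\pm\Kcal'$ obtained from Theorem \ref{thm.positive_definiteness_calderon}. Your write-up just fills in the bookkeeping that the paper leaves to the citation.
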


\begin{proof}
The proof follows by the same arguments as in the proof of 
\cite[Corollary 3.17]{Costabel1990}, which is similar to the 
respective proof for the Dirichlet problem.
\end{proof}

\section{Conclusion}\label{sec.conclusion}
In this article, we considered the heat equation on a time-varying 
(so-called non-cylindrical) domain. In contrast to the problem on a 
cylindrical domain, we used a modified Neumann trace operator 
containing a term which is dependent on the velocity of the moving 
surface. We were able to show the mapping properties of the layer 
operators by following the proofs of Costabel \cite{Costabel1990}. 
To this end, we heavily used the fact that the non-cylindrical domain 
is a mapped cylindrical domain. Then, using mapped anisotropic 
Sobolev spaces, we obtain analogous mapping properties and are 
also able to prove existence and uniqueness of solutions of the 
Dirichlet and of the Neumann problem.

\subsection*{Acknowledgement}
This reseach is in part supported by the National Science
Foundation under grant DMS-1720431.
\bibliographystyle{plain}

\end{document}